\def\thmt@refnamewithcomma #1#2#3,#4,#5\@nil{%
  \@xa\def\csname\thmt@envname #1utorefname\endcsname{#3}%
  \ifcsname #2refname\endcsname
    \csname #2refname\expandafter\endcsname\expandafter{\thmt@envname}{#3}{#4}%
  \fi
}
\newcommand{\indep}{\rotatebox[origin=c]{90}{$\models$}}
\title{}
\newcommand{\N}{\mathbb{N}}
\newcommand{\con}{\rightarrow}
\newcommand{\conx}[1]{\stackrel{#1}{\rightarrow}}
\newcommand{\R}{\mathbb{R}}
\newcommand{\ve}{\varepsilon}
\newcommand{\define}[4]{\expandafter#1\csname#3#4\endcsname{#2{#4}}}
\newcommand{\ccK}{{\cal K}}
\newcommand{\ql}{\mathbbm{1}}
\newcommand{\lp}{\left( }
\newcommand{\rp}{\right) }
\newcommand{\lb}{\left\lbrace}
\newcommand{\rb}{\right\rbrace}
\newcommand{\lf}{\left[}
\newcommand{\rf}{\right]}
\newcommand{\lv}{\left\vert}
\newcommand{\rv}{\right\vert}
\newcommand{\lV}{\left\Vert}
\newcommand{\rV}{\right\Vert}
\newcommand{\E}{\mathbb E}
\declaretheoremstyle[
  spaceabove=\topsep,
  spacebelow=\topsep,
  headfont=\normalfont\bfseries,
  notefont=\bfseries, notebraces={(}{)},
  bodyfont=\normalfont\itshape,
headfont=\normalfont\bfseries,
notefont=\bfseries, notebraces={(}{)},
bodyfont=\normalfont,
postheadspace=\newline,
qed=$\circ$
]{exstyle}
\declaretheorem[style=bombom,title=Theorem,numberwithin=section,refname={theorem}]{s}
\declaretheorem[style=bombom,name=Lemma,sibling=s,refname={lemma}]{lm}
\declaretheorem[style=bombom,name=Definition,sibling=s,refname={definition}]{defi}
\declaretheorem[style=bombom,name=Corollary,sibling=s,refname={corollary}]{koro}
\declaretheorem[style=exstyle,name=Example,sibling=s,refname={example}]{eks}
\declaretheorem[style=bombom,name=Proposition,sibling=s,refname={proposition}]{prop}
\declaretheoremstyle[
  spaceabove=\topsep,
  spacebelow=30pt,
  headfont=\normalfont\bfseries,
  notefont=\mdseries, notebraces={(}{)},
  bodyfont=\normalfont,
 postheadspace=\newline,
 numbered=no,
  qed={\mathbb E}dsymbol,
  name=Proof
]{mythmstyle}
\newtheorem{rem}{Remark}
\newtheorem{ass}{Assumption}
\title{Stability and Mean-Field Limits of Age Dependent Hawkes Processes}
\author{Mads Bonde Raad \thanks{Department of Mathematical Sciences,
		University of Copenhagen, Universitetsparken 5, DK-2100 Copenhagen}\\
\and
Susanne Ditlevsen \footnotemark[1]
	\and
	Eva L{\"o}cherbach\thanks{
		Universit{\'e} de Paris 1 Panth\'eon-Sorbonne, SAMM, F-75013 Paris}
}
\begin{document}
\maketitle

\begin{abstract}
	In the last decade, Hawkes processes have received a lot of
        attention as good models for functional connectivity in neural
        spiking networks. In this paper we consider a variant of this
        process; the age dependent Hawkes process, which incorporates
        individual post-jump behavior into the framework
        of the usual Hawkes model. This allows to model recovery
        properties such as refractory
        periods, where the effects of the network are momentarily being
        suppressed or altered. We show how classical stability results for Hawkes
        processes can be improved by introducing age into the
        system. In particular, we neither need to a priori bound the
        intensities nor to impose any conditions on the Lipschitz
        constants. When the interactions between neurons are of
        mean-field type, we study large network limits and establish
        the propagation of chaos property of the system.
        
         \begin{center}
        	\textbf{R\'esum\'e}
        \end{center}
        Depuis la derni\`ere d\'ecennie il s'est av\'er\'e que la classe des processus de Hawkes fournit un bon mod\`ele pour d\'ecrire la connectivit\'e fonctionnelle dans un r\'eseau de neurones. Dans cet article nous \'etudions une variante de ce processus, le processus de Hawkes structur\'e en \^age. Cette structure en \^age rajoute un comportement individuel apr\`es les sauts \`a la dynamique de chaque composante, ce qui permet en particulier de d\'ecrire une p\'eriode refractaire durant laquelle l'influence du r\'eseau est supprim\'ee ou au moins modifi\'ee. Nous am\'eliorons les r\'esultats de stabilit\'e classiques pour les processus de Hawkes dans ce cadre. En particulier, nous n'avons ni besoin de supposer que les intensit\'es sont born\'ees, ni d'imposer une condition aux normes Lipschitz des fonctions taux de saut. Lorsque les interactions entre les neurones sont du type champ moyen, nous \'etudions les limites en grande population et nous d\'emontrons la propri\'et\'e de propagation du chaos du syst\`eme.   
\end{abstract}

{\it Key words}: Multivariate nonlinear Hawkes processes,
Multivariate point processes, Mean-field approximations, Age
dependency, Stability, Coupling, Piecewise
deterministic Markov processes.

{\it AMS Classification}  : 60G55; 60G57; 60K05

\section{Introduction}
In nature, macroscopic events caused by many microscopic events in an
interacting network of units often
exhibit a cascading structure, so
that they come in waves, for example caused by some events inhibiting
or exciting the occurrence
of other events. Technological development permitting high-frequency
data sampling in recent decades has made it possible to perform
detailed analysis of the
dependencies between units interacting in a cascade structure. It is therefore
relevant to develop models which can quantify temporal
interactions between many units on a microscopic level. There are many
examples of phenomena of interest which occur in cascades and have
been analyzed by Hawkes processes.
Examples include bankruptcies in
finance that propagate through a market, giving rise to volatility clustering
observations \cite{Hawkes-In-Finance},
interactions on social media \cite{Social-Media}, and pattern dependencies in DNA \cite{pat}.

Hawkes processes are point processes where
the intensity function is stochastic and allowed to depend on the past
history, introducing memory in the temporal evolution of the
stochastic process. They have commonly been
used to model neurophysiological processes
\cite{chevallier,ccdr,SusEva,GerhardDegerTruccolo2017,patvin}, and the
application we have in mind is to model functional connectivity
between neurons in a network. When neurons send an electric signal,
the so-called {\em action potential} or {\it spike}, they excite or inhibit recipient neurons in
the network (the {\em post-synaptic} neurons).  Jumps of the $i$th
unit of the Hawkes
process are then identified with the spike times of the $i$th
neuron. Moreover, the biological process imposes a strong
self-inhibition on a neuron that has just emitted a spike. This period
of about $2 ms$ is called the \textit{absolute refractory period}, and
in this phase it is virtually impossible for a neuron to spike
again. The neuron then gradually regains its ability to spike in the
longer \textit{relative refractory period}. It was proposed in
\cite{chevallier} to model absolute and relative refractory periods in
neuronal spike trains by {\it age dependent Hawkes processes}, where
the age of a unit is defined as the time passed since the last time it
jumped, and thus, it resets to zero at each jump time. The
present article is devoted to a thorough study of its stability
properties and associated mean-field limits.

It turns out that the classical stability results for Hawkes
        processes can be improved by introducing age into the
        system. In particular, we neither need to a priori bound the
        intensities nor to impose any conditions on the Lipschitz
        constants. This is interesting not only from a mathematical
        but also from a biological point of
        view. If a network of neurons is transmitting some
        information over time, it is not operational nor realistic that
        intensities should be bounded, a given signal should be
        transmitted without any necessary delay. However, if the
        activity explodes, the entire system breaks down. Introducing
        a refractory period on the individual neuron stabilizes the
        system, while at the same time the information is still
        transferred effectively by the network.

In the present paper we consider multivariate counting processes $(Z_t^i)_{t \geq 0}, i = 1, \ldots , N$, where
$N$ is the number of units in the network.

The counting processes $(Z_t^i)_{t \geq 0}, i = 1, \ldots , N, $ are characterized by their \textit{conditional intensities} which can informally be described as the instantaneous jump rate, given the past,
that is,
$$  \lambda_t^i  dt \approx P ( Z^{i}\mbox{ has a jump in } \left( t ,
  t + dt \right]\vert {\mathcal F}_{t}),$$
 where ${\mathcal F}_{t}$ is the history of the entire network of neurons.
 The Hawkes process is defined by imposing a specific structure on the conditional intensity.

Before giving a precise definition of the age dependent Hawkes process in \Cref{defiHawkes} below,
let us start by describing and discussing its related components in a
less formal way. We consider an $N-$dimensional point process $\lp
Z^{i}\rp_{i\leq N}$, where each  coordinate $Z^{i}$ counts the jump
events of the $i$th unit. The intensity of this process is a
predictable process depending on the history ${\mathcal F}_{t}$ before
and up to time $t.$ It is assumed to have the form
\begin{equation}
\lambda^{i}_{t}=\psi^{i}\lp X^{i}_{t} ,A^{i}_{t}\rp,
\end{equation}
where  $X^{i}_{t}$ is the \textit{memory process}, a predictable
  process depending on the history  ${\mathcal F}_{t}$ of the
  process,  $\psi^{i}$ is the
\textit{rate function,} and $A_t^{i} $ is the {\it age process} of
$Z^{i}$. Here follows a brief introduction of the involved objects.

\textbf{The Rate Function $\lp x,a\rp\mapsto \psi^{i}\lp x,a\rp$}
describes how the memory and the age influence the intensity of the $i$th unit. The existence of a
non-exploding Hawkes process is generally ensured by assuming that
$\psi^{i}$ is sub-linear in $x$. Often, stronger assumptions such as a
uniform bound on $\psi^{i}$ is also imposed to prove basic
properties. In this article we will work under standard Lipschitz- and
linear-growth-conditions, but we shall not need to bound the rate
function nor its Lipschitz constant.

\textbf{The Age Process $A^{i}_t$} associated to the $i$th process $Z^{i}$ is the
time elapsed since the last jump time of $Z^{i}$ before time $t$, that is,
\begin{align*}
A^{i}_{t} =\begin{cases}
 A^{i}_{0}+t, & \mbox{if  } Z^{i} \text{ has not jumped between time 0
   and time }t,\\
 t-\sup\lb s<t : \Delta Z^{i}_{s}>0\rb ,   & \text{otherwise,}
\end{cases}
\end{align*}
where $\Delta Z^{i}_{s} =
Z^{i}_{s}-Z^{i}_{s-}=Z^{i}_{s}-\lim_{\varepsilon \rightarrow
  0+}Z^{i}_{s-\varepsilon}$ are the jumps.

\textbf{The Memory Process} $X^{i}_t$ integrates the effects of previous
jumps in the network, where the influence from the past is a weighted average of all
previous jumps of all units that directly affect unit $i$ (the {\em
  pre-synaptic} neurons). Each unit has its own memory process, even
if they all depend on the same common history of all units, but they
are affected in individual ways. More precisely, the $i$th memory process is
assumed to have the structure
\begin{align*}
X^{i}_{t} = \sum_{j=1}^{N}\sum_{\tau < t : \Delta Z^{j}_\tau = 1 } h_{ij}\lp t-\tau\rp +R^{i}_{t}=\sum_{j=1}^{N}\int_{0}^{t-}h_{ij}\lp t-s\rp Z^{j}(ds) + R^{i}_{t}.
\end{align*}
In the definition of the memory process  we have introduced two new
objects.

\indent  \textbf{The Weight Function} $h_{ij}\lp t\rp$ determines how much a jump of unit $j$ that occurred $t$ time
units ago contributes to the present memory of unit $i$. Positive
$h_{ij}(t)$ means excitation of unit $i $ when a jump of unit $ j $
occurred $t$ time units ago, while
negative $h_{ij}(t)$ means inhibition.

\textbf{The Initial
  Signal} $R^{i}_t$ is a process assumed to be known at time $t=0.$ It
should be thought of as a memory process which the process inherits
from past time.

When $R^{i}\equiv 0$ and $\psi^{i}\lp
x,a\rp=f^{i}\lp x\rp$ for a suitable function $f^{i},$ we obtain the
usual non-linear Hawkes process which has been studied in detail,
e.g., in \cite{bm}.

In Section \ref{sec:stability}, we discuss stability of the age dependent Hawkes process.
The main assumption is a post-jump bound on the intensity,
corresponding to a strong self-inhibition for a short time interval
after a spike. This models the refractory
period. We do not impose any a priori bounds on  the intensities. Within this sub-model, we are able to prove stability
properties for the $N-$dimensional Hawkes process. The results we obtain are
similar to what has been shown for ordinary nonlinear Hawkes processes
in \cite{bm}   and recently in \cite{costa}. This last paper is however entirely devoted to the study of
weight functions which are of compact support giving rise to explicit regeneration points when the process comes back to the all zero measure.
Compared to these studies, it turns out that the natural self-inhibition by the
age processes eliminates the need of controlling the Lipschitz
constant of $\psi,$ and we do not need any restriction on the support
of the weight functions. We also discuss which starting conditions (that
is, which form of an initial process)  will ensure that the Hawkes process jumps in synchrony with the invariant process eventually. When this holds, the Hawkes process is said to \textit{couple} with the invariant process. These results are collected within our first main
theorem, Theorem \ref{Stab}.

During the proof of the stability properties, other
interesting properties of the model are discussed, such as a
nice-behaving domination of the intensities (Lemma \ref{Xbound}).

In Section \ref{sec:mf}, we study a mean-field setup and associated
mean-field limits. More precisely, we consider $N$ interacting units
which are organized within $\ccK$ classes of
populations. Each unit  belongs to one of these classes, and any two units within the
same class $k$ are assumed to be similar, $k=1,\dots ,\ccK$. This means that they have
the same rate function $\psi^{k}$,  memory process
  $X^k$ and initial signal $R^{k}$, and the
weight function describing the influence of any unit belonging to
another class $l$ is given by $N^{-1}h_{kl}$. However, each unit
still has its own age process.

In this setup we establish a limiting distribution for a large scale
network, $N\con \infty$. Let $N_{k}$ be the number of units in class
$k$ with proportion $N_{k}/N$, and assume $\lim_{N\rightarrow \infty}
N_k/N = p_k > 0$. We index the $j$th unit within class
$k$ by $Z^{kj}$, $k=1, \ldots, \ccK, j=1, \ldots , N_{k}$. It is
sensible to assume that small contributions from unit $Z^{lj}$ to the memory process
$X^{k}$ of the $k$th class disappear in the
large-scale dynamics, meaning that
$N^{-1}\sum_{j=1}^{N_{l}} \int_0^{t-} h_{kl} (t-s) Z^{lj} (ds ) \approx p_{l} \int_0^t h_{kl} (t-s)d {\mathbb E}
Z^{l1} (s) $ for large $N$,  for any $ 1 \le l \le \ccK.$ Therefore, if a limiting point process $\lp
Z^{kj}\rp_{j\in \N}$ exists, we expect that any $Z^{kj}, k \le \ccK, j \geq 1, $ should have
intensity
\begin{align*}
\lambda^{kj}_{t} = \psi^k\lp x^{k}_{t},A^{kj}_{t}\rp ,
\end{align*}
where $A^{kj}$ is the age process of $Z^{kj}$, and the process $x^{k}_{t}$ is deterministic, given by
\begin{align*}
x^{k}_{t} = \sum_{l=1}^{\ccK}p_{l}\int_{0}^{t} h_{kl}\lp t-s\rp d {\mathbb E} Z^{l1} (s) +r^{k}_{t},
\end{align*}
where $r_t^k$ is a suitable limit of the initial processes in
population $k$.
In Theorem \ref{koroet}, we discuss criteria under which such a system
exists. Our second main theorem, Theorem \ref{theo:prop}, shows that
this system will indeed be a limit process for the age dependent Hawkes
processes for $N\con \infty$. We also discuss in Lemma
\ref{weightapproximation} how robust the system is to adjusting the
weight functions. Not only is this robustness a good  model feature
in itself, but it also allows approximation of an arbitrary age
dependent Hawkes process, using weight functions with better
features. Examples are weight functions given by Erlang densities or
exponential polynomials which induce Markovian systems, see \cite{SusEva}.

We close our article with an Appendix where we collect some proofs and
useful results about counting processes.

\subsection*{Notation, Definitions and Core Assumptions}
Throughout this article, we will be working on a background probability space $\lp \Omega, {\mathcal F},P\rp ,$ and  all random variables are assumed to be defined on this space. If $v= (v_1, \ldots , v_d ) $ is a $d-$dimensional Euclidian vector, then $\lv v\rv = \sum_{i=1}^d \lv v_{i}\rv$ denotes the $1$-distance.  Moreover, for a $d-$dimensional process $X$ we define the running-supremum of the $1$-distance as $\|  X \|_{t}=\sup_{s\leq t}\lv X_{s}  \rv$.

We recall the basic Stieltjes integration notation. Let $g: \R\con \R$ be a càdlàg function. The variation of $g$ on a bounded interval $I$ is given by
	\begin{align*}
	V_{g}\lp I\rp  =\sup_{\lp x_{i}\rp \in \mathcal{I}} \lv g\lp x_{i}\rp-g\lp x_{i-1}\rp\rv<\infty.
	\end{align*}
	where $\mathcal{I}$ denotes the system of all finite sets of inceasing indices $\lp x_{i}\rp\subset I.$
	$g$ is said to be of finite variation, if the variation is finite on all bounded intervals. For such $g$ there exist two singular $\sigma-$finite measures $\mu_{g}^{+},\mu_{g}^{-}$ s.t. $\mu_{g}:=\mu_{g}^{+}-\mu_{g}^{-}$ which satisfies $\mu_{g}\lp a,b\rf=g\lp b\rp-g\lp a\rp.$ The variation measure $\lv dg\rv:=\mu_{g}^{+}+\mu_{g}^{-}$ satisfies $\lv dg\rv\lp a,b\rf= V_{g}\lp a,b\rf$.  If $f: \R\con \R $ is a measurable function such that  $\int \lv f\rv  \lv dg\rv<\infty$ then we define the \textit{Lebesgue-Stieltjes integral} as
	\begin{align*}
	\int f\lp x\rp dg\lp x\rp = \int f\lp x\rp d\mu_{g}\lp x\rp,
	\end{align*}
	see e.g. \cite{Halmos}. If $\nu$ is a measure on $\R^{2}$ we shall also use the following notation for the integral over semi-closed boxes
	\begin{align*}
	\int_{a}^{b}\int_{c}^{d} f\lp x,y\rp \;\nu\lp dx,dy\rp = \int \ql \lb \lp a,b\rf \rb \lp x\rp \ql \lb \lp c,d\rf \rb \lp y\rp f\lp x,y\rp \;\nu\lp dx,dy\rp.
	\end{align*}

In the following we introduce the core mathematical objects and
assumptions needed to discuss the age dependent Hawkes process.
\begin{itemize}
	\item[$\pi\; \vert $] $\pi $ and $\pi^{i}, i \in \N , $ are i.i.d.\ Poisson Random Measures (PRMs)  on $\R \times \R_{+} $ with Lebesgue intensity measure.
For any  $t \in \R ,$ we define the  $\sigma$-algebra $\tilde {\mathcal F}_{t}$ induced by the projections
	\begin{align*}
	\pi\lp A\cap \lp \lp -\infty,t \rf\times \R_{+} \rp \rp, \;
          \pi^{i} \lp A\cap \lp \lp -\infty,t \rf\times \R_{+} \rp
          \rp\mbox{ for }   A\in {\mathcal B}\lp \R\times \R_{+}\rp,
          \, i \in \N .
	\end{align*}
	We equip the space $\lp \Omega, {\mathcal F},P\rp$ with the
        filtration $( {\mathcal F}_t)_{t \in \R } $ which is the
        completion of  $\lp \tilde {\mathcal F}_{t} \rp_{t\in \R}.$

	\item[$h\; \vert $] weight functions: For all $ 1 \le i , j \le N, $ $h_{ij}  :\R_{+}\rightarrow \R$  is a  locally integrable function.
	\item[$R_{t}\; \vert  $]  initial signals: For all $ 1 \le i \le N, $  $(R^i_{t})_{t \geq 0}$ is an  $\mathcal{F}_{0 }\otimes \mathcal{B} $ measurable process on $t\in \R_{+}$ such that  $t \mapsto  {\mathbb E} R^i_{t}$ is locally bounded.
	\item[$\psi\; \vert  $]   rate functions: For all $1 \le i \le N, $   $\psi^i : \R\times \R_{+}\con \R_{+}$ is a measurable function which is $L$-Lipschitz in $x$ when the age variables agree,  and otherwise sub-linear in $x,$ i.e.,
	\begin{align}\label{psias1}
	\lv \psi^i\lp x,a\rp-\psi^i\lp x',a'\rp\rv\leq L \lv x-x' \rv\ql\lb a=a'\rb +L\lp \max\lp \lv x'\rv,\lv x\rv \rp +1 \rp \ql\lb a\neq a' \rb ,
	\end{align}
	for some $L \geq 1. $ By taking a possibly larger $L$ we may, and will, also assume that
	\begin{align}\label{psias2}
	\psi^i\lp x,a\rp\leq L\lp 1+|x| \rp\quad \forall \lp x,a\rp \in \R\times \R_{+} .
	\end{align}
	\item[$A_{0}\; \vert $]  initial ages $\lp A^{i}_{0} \rp_{i\in \N}$ are $ {\mathcal F}_0-$measurable random variables with support in $\R_{+}$.
\end{itemize}

With these definitions, we introduce the age dependent Hawkes process.
	 \begin{defi}[The age dependent Hawkes process]
	\label{defiHawkes}
	Let $N\in \N$ and let $\lp Z,X,A\rp = ( (Z^i)_{1 \le i \le N},  (X^i )_{1 \le i \le N }, (A^i )_{1 \le i \le N } ) $ be a triple consisting of an $N-$dimensional counting process $Z$, an $N-$dimensional predictable process $X$, and an $N-$dimensional adapted c\`agl\`ad process $A$. The triple is an $N-$dimensional age dependent Hawkes process with weight functions $\lp h_{ij} \rp_{i,j\leq N}$, spiking rates $\lp \psi^{i} \rp_{i\leq N}$, initial ages $\lp A^{i}_{0}\rp_{i\leq N}$, and initial signals $\lp R^{i} \rp_{i\leq N}$ if almost surely all sample paths solve the system
	\begin{eqnarray}
	Z_t^{i} &= &\int_{0}^{t}\int_{0}^{\infty}\ql\lb z\leq \psi^{i}\lp X^{i}_{s},A^{i}_{s}\rp\rb  \pi^{i}\lp ds,dz\rp, \nonumber \\
	X^{i}_{t}&=&\sum_{j=1}^{N}\int_{0}^{t-} h_{ij}\lp t-s\rp Z^{j}(ds)+R^{i}_{t} , \label{system}\\
	A^{i}_{t}-A^{i}_{0} &=& t-\int_{0}^{t-} A^{i}_{s} \, Z^i (ds)    ,\nonumber
	\end{eqnarray}
for all $ t \geq 0.$
\end{defi}

\begin{rem}
		Notice that we choose the c\`agl\`ad version of $A$, that is, $A$ is left continuous and has right limits. All age processes presented in the article will be c\`agl\`ad as well. This is notationally convenient as the age process will appear in the intensities for most point processes treated in this article.
	\end{rem}

\begin{eks}[Examples of rate functions]\label{Ex:rate}
	A possible choice of rate function is $ \psi^i ( x, a ) = f^i
	(x) g^i (a) $,  where $f^i$ is  Lipschitz
	and  $g^i$ is bounded. In particular, for the neuroscience application, it is possible to model an absolute refractory period of length $\delta$ by putting $ \psi^i (
	x, a) = f^i (x) \ql \{ a > \delta\} .$
	Another example are the rate functions considered in the time elapsed neuron network model of \cite{salort} given by
	$ \psi^i ( x, a) = f^i (x) g^i ( a - s^i (x) ) ,$
	where $g^i \equiv 0 $ on $\R_-,$ and where $g^i, s^i $ are Lipschitz and bounded.
\end{eks}

\begin{eks}[Initial process]
Let $ z^i  (ds) , 1 \le i \le N, $ be ${\cal F}_0-$measurable point measures on $ [ -
\infty , 0 ] $ which we interpret as the initial condition of the age
dependent Hawkes process. We then typically think of  initial
processes $R_t^i$ of the form
$$ R_t^i = \sum_{j=1}^N \int_{- \infty }^0 h_{ij} (t-s)  z^j (ds) ,$$
provided the above expression is well-defined.
\end{eks}
Well-posedness of the system \eqref{system} follows from
\begin{prop}\label{prop:13}
	Almost surely, there is a unique sample path $\lp Z,X,A\rp$ solving \eqref{system}. Moreover, $Z$ is non-exploding.
\end{prop}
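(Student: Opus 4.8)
I would prove Proposition~\ref{prop:13} by constructing the solution pathwise through its successive jump times, ruling out explosion by a moment estimate, and then reading off uniqueness from the construction.

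\emph{Construction.} Work on the $\mathbb P$-full event on which $\int_0^t|R^i_s|\,ds<\infty$ for all $i\le N$ and $t\ge0$ (a.s.\ true by the integrability of the initial signals). Set $T_0=0$, and suppose the first $n$ jump times $0=T_0<T_1<\dots<T_n$ together with the coordinates that jumped at them have been built. Conditionally on $\mathcal F_{T_n}$, on $(T_n,\infty)$ the age processes grow linearly and each $X^i$ is a deterministic function of $t$ (a finite sum of translates $s\mapsto h_{ij}(s-\tau)$ over past jump times $\tau$, plus $R^i$) as long as no new jump occurs, so the candidate intensities $\lambda^i_t:=\psi^i(X^i_t,A^i_t)$ are deterministic on $(T_n,\infty)$; by \eqref{psias2} and $h_{ij}\in L^1_{\mathrm{loc}}$ they are locally integrable in $t$, hence each $\pi^i$ a.s.\ has only finitely many atoms in $\{(z,s):T_n<s\le T_n+m,\ 0<z\le\lambda^i_s\}$ for every $m$. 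Therefore
\[
T^i_{n+1}:=\inf\bigl\{s>T_n:\ \pi^i(\{(z,r):T_n<r\le s,\ z\le\lambda^i_r\})\ge1\bigr\}>T_n\quad\text{a.s.},
\]
and we let $T_{n+1}:=\min_i T^i_{n+1}$, have the (a.s.\ unique) minimizing coordinate jump at $T_{n+1}$, and update $Z$ and $A$. This produces $(Z,X,A)$ on $[0,T_\infty)$, $T_\infty:=\lim_nT_n$, solving \eqref{system} there by construction; $X$ is predictable (a measurable functional of the strict past) and $A$ is adapted and c\`agl\`ad.

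\emph{Non-explosion.} Fix $K\in\N$. Up to $T_K$ there are at most $K$ jumps, so $\sum_iZ^i_{t\wedge T_K}\le K$; using \eqref{psias2}, the bound $|X^i_t|\le\sum_j\int_0^{t-}|h_{ij}(t-s)|\,Z^j(ds)+|R^i_t|$, Fubini, and $\int_u^t|h_{ij}(r-u)|\,dr\le\|h_{ij}\|_{L^1[0,t]}$, the compensator $\int_0^{\cdot\wedge T_K}\psi^i(X^i_s,A^i_s)\,ds$ is integrable and, for all $t\ge0$,
\begin{align*}
\mathbb E\bigl[Z^i_{t\wedge T_K}\bigr]&=\mathbb E\int_0^{t\wedge T_K}\psi^i(X^i_s,A^i_s)\,ds\\
&\le Lt+L\int_0^t\mathbb E|R^i_s|\,ds+L\sum_{j=1}^N\|h_{ij}\|_{L^1[0,t]}\,\mathbb E\bigl[Z^j_{t\wedge T_K}\bigr].
\end{align*}
Choose $t_0>0$ with $LN\max_{i,j}\|h_{ij}\|_{L^1[0,t_0]}\le\frac12$, possible since $\|h_{ij}\|_{L^1[0,t]}\downarrow0$ as $t\downarrow0$. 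Summing over $i$ and absorbing the last term, $\sum_i\mathbb E[Z^i_{t_0\wedge T_K}]\le2LNt_0+2L\sum_i\int_0^{t_0}\mathbb E|R^i_s|\,ds=:2C_0<\infty$, uniformly in $K$. Since $\sum_iZ^i_{T_K}=K$, this yields $K\,\mathbb P(T_K\le t_0)\le2C_0$, hence $\mathbb P(T_\infty\le t_0)=\lim_K\mathbb P(T_K\le t_0)=0$ and, by monotone convergence, $\sum_i\mathbb E[Z^i_{t_0}]\le2C_0$. Restarting the same estimate from $t_0$ — the contribution of the jumps in $[0,t_0]$ to the memory over $[t_0,2t_0]$ being integrable because $\mathbb E[Z^j_{t_0}]<\infty$ and $h_{ij}\in L^1_{\mathrm{loc}}$ — gives $\mathbb P(T_\infty\le2t_0)=0$, and inductively $T_\infty=\infty$ a.s. Thus $Z$ does not explode and $(Z,X,A)$ solves \eqref{system} on all of $\R_+$.

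\emph{Uniqueness and the main obstacle.} Let $(\bar Z,\bar X,\bar A)$ be any solution. The estimate above applies to it verbatim, so $\bar Z$ is non-exploding with isolated jump times $0<\bar T_1<\bar T_2<\cdots$. On $[0,\bar T_1)$ we have $\bar Z\equiv0$, so $\bar A$, $\bar X$ and the intensities $\psi^i(\bar X^i,\bar A^i)$ agree on $[0,\bar T_1)$ with the objects built above on $[0,T_1)$; hence $\bar T_1$ is the first atom of $\bigcup_i\pi^i$ lying below those curves, i.e.\ $\bar T_1=T_1$ with the same jumping coordinate, and inductively $(\bar Z,\bar X,\bar A)=(Z,X,A)$ on each $[0,T_n]$, hence everywhere. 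The only genuine subtlety is that nothing bounds the intensity — a singular but integrable weight function makes $s\mapsto\psi^i(X^i_s,\cdot)$ blow up immediately after a jump — so the textbook bounded-intensity thinning construction does not apply as is; one must work with the \emph{integral} of $\lambda^i$ rather than its supremum and extract non-explosion from sub-linear growth and local $L^1$-control of the kernels via the above Gronwall-type bound, together with the restart step, which is forced because the contraction constant only controls a short window $[0,t_0]$. The remaining points — predictability of $X$ and the compensator identity — are routine and can be relegated to the Appendix.
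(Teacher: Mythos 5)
Your proof is correct, but your non-explosion argument takes a genuinely different route from the paper's. The paper first assumes the total initial signal is bounded by a constant $M$, dominates $(Z^i)$ pathwise by a linear Hawkes process $\tilde Z$ with intensity $L(1+Y_s)$, where $Y_t=\sum_j\int_0^{t-}h(t-s)\,\tilde Z^j(ds)+M$ and $h=\sum_{i,j}|h_{ij}|$, driven by the same PRMs, and imports existence and non-explosion of $\tilde Z$ from Theorem 6 of \cite{dfh}; the general case is then reached by truncating $R^i$ at level $m$ and a Borel--Cantelli argument showing that the truncated solutions stabilize. You instead prove non-explosion directly: the stopped moment bound $\E\bigl[Z^i_{t\wedge T_K}\bigr]\le Lt+L\int_0^t\E|R^i_s|\,ds+L\sum_j\|h_{ij}\|_{L^1[0,t]}\E\bigl[Z^j_{t\wedge T_K}\bigr]$, a contraction on a short window chosen so that $LN\max_{i,j}\|h_{ij}\|_{L^1[0,t_0]}\le\frac12$, and a restart. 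This is self-contained (no appeal to an external existence theorem) and handles unbounded initial signals in one pass, at the cost of the short-window and restart bookkeeping; the paper's route is shorter but outsources the Gronwall work to the cited result. Both arguments obtain existence and uniqueness by the same induction over successive jump times. One small caveat: your full-measure event $\{\int_0^t|R^i_s|\,ds<\infty\}$ and the term $\int_0^t\E|R^i_s|\,ds$ require reading the standing assumption on the initial signals as local boundedness of $t\mapsto\E|R^i_t|$, which is also what the paper's own proof implicitly uses when it invokes $\E\int_0^T|R^i_s|\,ds<\infty$.
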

\begin{proof}
	 Let $h=\sum_{i,j=1}^{N}\lv h_{ij}\rv$ and { $R = \sum_{i = 1
           }^{N }\lv R^{i }\rv $. Assume first that $R $ is bounded by
           some constant $M>0 .$} Consider the linear Hawkes processes
	\begin{eqnarray}\label{eq:ztilde}
	\tilde{Z}^{i}_{t} &=& \int_{0}^{t}\int_{0}^{\infty}\ql\lb z\leq L\lp 1+Y_{s}\rp\rb \pi^{i}\lp ds,dz\rp ,  1 \le i \le N,  \nonumber \\
	Y_{t}&=& \sum_{j=1}^{N}\int_{0}^{t-}  h\lp t-s\rp  \tilde{Z}^{j}  (ds) +M.
	\end{eqnarray}
	Notice that $\tilde{Z}$ is driven by the same PRMs as $Z$. It is well known  that the system \eqref{eq:ztilde} almost surely has a path-wise unique solution, which is defined for all  $t \in \lf 0 ,\infty\rp $ (see for example \cite{dfh}, Theorem 6).  By induction over jump times of $\sum_{j=1}^{N}\tilde{Z}^{j}$ it follows that \eqref{system} has a unique solution satisfying $Z_t^{i}\leq \tilde{Z}_t^{i}$ for all $t\in \R_+,$ implying that $ Z$ does not explode.\\\\
   {In the general case define for each $m \in \N $ the $
     N-$dimensional age dependent Hawkes processes $ ( Z^m , X^m ,
     A^m ) $ with the
     same starting conditions and parameters as $\lp Z,X,A \rp $
     except for the initial processes, which are instead defined as  $R^{mi } = -m\vee R^i \wedge m. $ Write also $  \lambda_t^{ m , i} = \psi^i (  X^{m, i }_t,  A_t^{m, i } ) $ for the associated intensity and  $  \lambda_t^{m \wedge m+1, i } :=  \lambda^{m, i}_t \wedge  \lambda^{m+1, i }_t .$ We have that
  \begin{align*}
  P &(  Z_{t }^m \neq  Z_{t }^{m+1},\;t \in [0,T] )   \\
  &\le \, P \left( \exists i :\int_0^\infty \int_{0}^{T } \ql \{ z \in (  \lambda_s^{m \wedge m+1, i } ,  \lambda_s^{m \wedge m+1, i }  + L | R^{m, i}_s  - R^{m+1 , i}_s |  \} \pi^i \lp ds,dz\rp \geq  1  \right)  \\
  &\le \, L \sum_{i=1}^N   \E \int_0^T | R^{m, i}_s  - R^{m+1 , i}_s |
    \, ds  .
\end{align*}
  Since $ \E \int_0^T |R^i_s | ds < \infty , $ we conclude that
  $$ \sum_m P (  Z^m_{t } \neq  Z_{t }^{m+1},\;t \in  [0,T] ) < \infty , $$
  implying that almost surely, the limit $Z = \lim_{m\to \infty }Z^{m } $ exists. It is straightforward to show that $Z $ solves \eqref{eq:ztilde}.}

\end{proof}

\section{Stability}\label{sec:stability}
We start this section by discussing the stability of the age dependent Hawkes process
within a sub-model where age acts as an inhibitor. For nonlinear
Hawkes processes with no age dependence, a thorough investigation of
invariant distributions and couplings was done in \cite{bm}. However,
for results where boundedness is not forced upon the system (Theorem
1 of \cite{bm}), stability depends on the Lipschitz constant $L$ of
$\psi$ in \eqref{psias1}. As we show in \Cref{Stab} below, such
restrictions are not necessary when age is incorporated as an
inhibitor. The fact that the model has desirable stability properties
is indicated by the result of Lemma \ref{Xbound}, where we
state a strong control of the intensity.

Throughout this section, the processes are defined
on the entire real line $\R$ unless otherwise mentioned. We do this
for the following reason. When studying stability and thus the
existence of stationary versions of infinite memory processes such as
(age dependent) Hawkes processes, a widely used approach is to
construct the process starting from $t = - \infty .$ If such a
construction is feasible, this implicitly implies that the state of
the process at time $t=0$ must be in a stationary regime. Therefore,
throughout this section we will work with random measures $ Z $
defined on the entire real line, with the usual
identification of processes and random measures given by  $Z_t = Z (
(0, t ] ) ,$ for all $ t \geq 0,$ and $Z_t  = - Z ( (t, 0]), $ for all
$ t < 0 .$ We shall also use the shift operator $ \theta^r $ which is defined for any $ r \in \R$ by
\begin{equation}
\label{eq:shiftoperator}
\theta^r Z ( C ) := Z ( r + C ) := Z ( \{ r + x : x \in C \} ) ,
\end{equation}
for any $ C \in {\cal B} ( \R).$

\paragraph{Setup in this section:}
We consider a system with a fixed number of units $N$. Introduce the functions
$$ \overline{h}_{ij}( t) =\sup_{s \geq t} | h_{ij}| ( s),\ h\lp t \rp=\sum_{i,j = 1 }^{N }\lv h_{ij }\rv  .$$
In addition to the fundamental assumptions we add the following set of assumptions.
\begin{ass}\label{as1}
1. There exist $ K $ and $ \delta > 0 $ such that
\begin{equation}\label{gassump}
	 \psi^i \lp x,a\rp\leq K \text{ for all } 1 \le i \le N, a\in [0,\delta],x\in \R.
	\end{equation}
2. There exist
	$x^* , a^* , c > 0 $ such that for all $ |x| \le x^*, a \geq a^* $ and for
	all $ 1 \le i \le N, $
	\begin{equation}\label{eq:doeblin}
	\psi^i ( x, a ) \geq c > 0 .
	\end{equation}
3. We suppose that
	\begin{equation}\label{eq:hbar}
	[0, \infty [ \ni  t\mapsto  \overline{h}_{ij}( t) \in
	{\mathcal L}^{1} \cap {\cal L}^2   \, \mbox{ and  }  \, \;   [0, \infty [ \ni  t\mapsto t  h_{ij}( t) \in {\mathcal L}^1.
	\end{equation}
	\end{ass}
\noindent Notice that \eqref{eq:hbar} implies that
	\begin{equation}\label{eq:hbar2}
	\overline{h}:=\sum_{i,j=1}^{N} \overline{h}_{ij} \in {\mathcal L}^{1} \cap {\cal L}^2
	\end{equation}
	is a decreasing function that dominates $h_{ij}$ for all $i,j\leq N.$

\begin{rem}
	The existence of $K,\delta$ in \eqref{gassump} excludes instantaneous bursting by imposing a bound on the immediate post-jump intensity. Moreover the existence of $x^*, a^*,c$ in \eqref{eq:doeblin} ensures that no unit will eventually stop spiking. A main example of rate functions that satisfy this assumption are those inducing absolute refractory periods as given in \Cref{Ex:rate}.

	The assumption $ \overline{h} \in {\mathcal L}^1 $ is natural, at
	least in the context of modeling interacting neurons. To obtain stability, it is usually
	assumed that the weight functions are integrable. Here we impose the slightly stronger assumption that $ \bar h_{ij}  \in {\mathcal L}^1 ;$ that is, there exists a decreasing integrable function dominating $ h_{ij } .$
\end{rem}

Throughout this section we use the following notation. For  $K >
0 $ as in \eqref{gassump} above, we denote the PRMs
\begin{equation}\label{eq:pik}
\pi_K ( ds ) := \pi (ds, [0, K]), \quad \pi^i_K ( ds) := \pi^i (ds, [0, K])  \, \mbox{ and } \, \pi_{N K} := \sum_{i=1}^N \pi^i_K .
\end{equation}

\begin{eks}[Hawkes processes with Erlang weight functions]\label{ex:erlang}
Weight functions given by Erlang kernels are widely used in the modeling literature to describe  delay in the information transmission. They are given by
$$ h_{ij } (t) = c_{ij} t^{ n_{ij }} e^{ - \nu_{ij} t },\;\;  t \geq 0, $$
where $ c_{ij} \in \R, \nu_{ij} > 0 $ and $n_{ij} \in \N \cup \{0\} $ are fixed constants. The order of the delay is given by $n_{ij } .$ The delay of
the influence of particle $j$ on particle $i$ is distributed and
taking its maximum absolute value at $n_{ij}/\nu_{ij}$ time units back
in time. The sign of $c_{ij }$ indicates if the influence is
inhibitory or excitatory, and the absolute value of $c_{ij }$ scales
how strong the influence is.
All $h_{ij}$ clearly satisfy \eqref{eq:hbar}.
\end{eks}

The main result of this chapter shows existence of a unique stationary $N-$dimensional  age dependent Hawkes process following the dynamics of \eqref{system}. In order to state the result, we first introduce the notion of \textit{compatibility} (see e.g.\ \cite{bm}). Let
	 $M_{\R_{-}} $ be the set of all bounded measures defined on $ \R_{-} $ equipped with the weak-hat metric and the associated Borel $\sigma-$algebra $ {\cal M}_{\R_-} $ (see Appendix for details). We shall say that $Z$ is \textit{compatible} (to $\pi^{1},\dots,\pi^{N})$ if there is a measurable map $H: M^{N}_{\R_{-}}\con M_{\R_{-}}$ such that for all $t \in \R , $
	\begin{equation}
	\lp \theta^{t} Z\rp_{\vert\R_{-}}=H\lp \lp \theta^{t}\pi^1,\dots,\theta^{t}\pi^{N}\rp_{\vert\R_{-}}\rp
	\end{equation}
	Likewise, we say that a stochastic process $X$ is compatible, if $X_{t}=H\lp \lp \theta^{t}\pi^1,\dots,\theta^{t}\pi^{N}\rp_{\vert\R_{-}}\rp$ for an appropriate measurable mapping  $H$.
\begin{rem}
	Note that if $Z^{1},\dots Z^{n}$ are compatible random measures, then $\lp Z^{1},\dots Z^{n}\rp$ is a stationary and ergodic n-tuple of random measures.
\end{rem}

Let $Z=\lp Z^{i} \rp ,$ $1 \le i\leq N, $ be  compatible random measures on  $\mathbb{R}$. Let $X=\lp X^{i } \rp_{i\leq N }$, $A=\lp A^{i } \rp_{i\leq N }$ be compatible processes defined on  $t\in \R $ such that  $A_{t }^{i }$ is adapted and c\`agl\`ad and  $X_{t }^{i }$ is predictable for  all $1 \le i\leq N$.
We say that  $Z $ is an N-dimensional age dependent Hawkes process on   $t\in \R $, if almost surely
\begin{eqnarray}
	Z^{i} (t_1, t_2 ]  &= &\int_{t_1}^{t_2 }\int_{0}^{\infty}\ql\lb z\leq \psi^{i}\lp X^{i}_{s},A^{i}_{s}\rp\rb  \pi^{i}\lp ds,dz\rp  ,  \nonumber \\
	X^{i}_{t}&=&\sum_{j=1}^{N}\int_{-\infty }^{t-} h_{ij}\lp t-s\rp Z^{j}(ds)   \quad t\in \mathbb{R},\label{systembis}\\
	A^{i}_{t_2} - A^i_{t_1}  &=&  t_2- t_1  -\int_{t_1}^{t_2- }  A^{i}_{t} \, Z^i (dt), \nonumber
	\end{eqnarray}
 for all $-\infty< t_1 \le t_2. $

\begin{s}
\label{Stab}
Grant \Cref{as1}.
\begin{enumerate}
\item  There exists an $N$-dimensional age dependent Hawkes process $ Z $ on $\R,$ compatible to $\lp \pi^{1},\dots , \pi^{N}\rp$.
\item Let $\check{Z}$ be another  $N-$dimensional age dependent Hawkes process with the same weight functions $(h_{ij})_{i, j \le N}$ and driven by the same PRMs $ \lp \pi^{1},\dots , \pi^{N}\rp,$ following the dynamics \eqref{system}, that is,  starting at time $0$ with arbitrary initial ages $(\check A^i_{0})_{i \le N} $ and initial signals  $\lp R^{i} \rp_{ i \le N}  $ such that
\begin{align}\label{eq:rint}
{\mathbb E} \int_{0}^{\infty}\lv R^i_{s}\rv  ds<\infty
\end{align}
for all $ 1 \le i \le N. $ Then almost surely, $\check{Z}$ and $Z$ couple eventually,  i.e.,
\begin{align*}
\exists \, t_{0}\in \R_+ : \;\;  \check{Z}_{| [t_0, \infty) } = Z_{| [t_0, \infty ) }.
\end{align*}
\item If  $Z'$ is another $N$-dimensional age dependent Hawkes process on $\R, $  compatible to $\lp \pi^{1},\dots , \pi^{N}\rp ,$ then  $Z = Z'$ almost surely.
\end{enumerate}
\end{s}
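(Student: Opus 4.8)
The plan is to construct the stationary process by a classical "start from $-\infty$" argument combined with a coupling/domination estimate, and then leverage the same coupling to obtain both the uniqueness statement (3) and the eventual coupling statement (2).

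For part (1), I would proceed as follows. For each $n \in \N$ let $Z^{(n)}$ denote the Age Dependent Hawkes process on $[-n, \infty)$ started at time $-n$ from a fixed reference initial condition — the simplest choice being all ages set so that every unit is "old" (age $\geq a^*$, in the regeneration regime of \eqref{eq:doeblin}) and zero initial signal, or equivalently started from the empty configuration on $(-\infty, -n]$. Existence and pathwise uniqueness of each $Z^{(n)}$ on $[-n,\infty)$ follows from Proposition \ref{prop:13} (after the obvious time-shift), and all the $Z^{(n)}$ are built on the same PRMs $\pi^1, \dots, \pi^N$. The key claim is then that for $m > n$, the processes $Z^{(n)}$ and $Z^{(m)}$ agree on $[t_0, \infty)$ for some a.s.\ finite $t_0$; more precisely, that the "difference" propagates forward only finitely often. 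This is where assumption \eqref{gassump} (the post-jump bound $\psi^i \leq K$ on $[0,\delta]$) and the regeneration assumption \eqref{eq:doeblin} enter: between any two consecutive jumps of a unit the age strictly increases, so after a jump the intensity is bounded by $K$ for a time $\delta$, and this lets one dominate the total number of jumps in any window by a process with bounded intensity (this is essentially the content of Lemma \ref{Xbound}, which I would use as a black box here). Combined with $\overline h \in \mathcal{L}^1 \cap \mathcal{L}^2$, the influence of the differing initial segments decays, and one shows $\sum_n P(\text{$Z^{(n)}$ and $Z^{(n+1)}$ disagree on } [-n+T, \infty)) < \infty$ for each $T$, so Borel–Cantelli gives a well-defined limit $Z := \lim_n Z^{(n)}$ which, by passing to the limit in the defining equations \eqref{systembis} (the integrals converge by dominated convergence using $\overline h \in \mathcal{L}^1$ and the non-explosion bound), solves the system on all of $\R$. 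Compatibility of $Z$ to $(\pi^1,\dots,\pi^N)$ follows because each $Z^{(n)}$ on $[-n,\infty)$ is a measurable functional of the driving PRMs restricted to $[-n,\infty)$, and the limit inherits this; stationarity and ergodicity then come from the remark following the definition of compatibility, together with the stationarity and ergodicity of the $\pi^i$ under the shift.

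For parts (2) and (3), the engine is a single \emph{coupling lemma}: if $Z$ and $\check Z$ are two Age Dependent Hawkes processes on the same PRMs with weight functions $(h_{ij})$, differing only in their behaviour before some time (one started at $-\infty$, the other at time $0$ with initial ages and signals satisfying \eqref{eq:rint}), then a.s.\ there exists a finite $t_0$ with $\check Z|_{[t_0,\infty)} = Z|_{[t_0,\infty)}$. The idea is a Doeblin-type regeneration: using \eqref{eq:doeblin}, each unit spikes at least at rate $c$ whenever its age exceeds $a^*$ and its memory is small, so with positive probability, over any sufficiently long window, \emph{all} units simultaneously realize a spike followed by a quiet period of length exceeding the relevant memory-decay horizon — at such a regeneration time the ages of the two processes match and, because $\overline h \in \mathcal L^1$ and $t \mapsto t h_{ij}(t) \in \mathcal L^1$, the contribution of the pre-$0$ discrepancy (including the finite-expectation initial signal $R^i$, which is integrable by \eqref{eq:rint}) to the memory has become negligible; one then shows the two intensities coincide from that point on, so the two processes are driven identically by the PRMs thereafter and stay equal. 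Making "negligible" into "exactly equal" requires care — this is really where the structure $R^i_t = \sum_j \int_{-\infty}^0 h_{ij}(t-s) z(ds)$ type initial signals, or the integrability \eqref{eq:rint}, is used to guarantee the discrepancy is eventually carried entirely by a bounded region of the PRM that, with probability one, is eventually "overwritten" by a common regeneration event. Part (3) is then the special case where $\check Z$ is itself stationary/compatible: two compatible solutions couple eventually by part (2), but two stationary processes that agree eventually must agree everywhere (shift-invariance: $P(Z_0 \neq Z'_0) = P(\theta^{-t} \text{ event}) \to 0$), giving $Z = Z'$ a.s.

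The main obstacle I anticipate is the regeneration/coupling estimate at the heart of parts (2)–(3): one must quantify, uniformly enough, how a simultaneous "all units spike then go quiet" event resets the age coordinates \emph{and} flushes the memory, and then argue that such events recur infinitely often a.s.\ despite the memory being infinite-range. The technical friction is that the event of interest involves all $N$ units simultaneously and depends on the memory process, which is not bounded a priori; one circumvents this by first conditioning on the dominating bound from Lemma \ref{Xbound} (so that on a high-probability event the memory is controlled), and then running a Borel–Cantelli argument over a sequence of disjoint candidate regeneration windows whose success probabilities are bounded below. A secondary, more routine obstacle is the limit-passage in part (1): checking that $X^{(n),i}_t \to X^i_t$ and that the stochastic integrals defining $Z^{(n),i}$ converge requires dominated convergence with the explicit dominating function $\overline h$ and the a.s.\ finiteness of jump counts on compacts, which I would state but not belabor.
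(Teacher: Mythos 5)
Your architecture for part (1) — a Loynes-type ``start at $-n$ and let $n\to\infty$'' scheme — is genuinely different from the paper's, which instead runs a Picard iteration directly on $\R$ (alternately updating the memory processes $X^{ni}$ and the pairs $(Z^{ni},A^{ni})$, dominating everything by the compatible process $\hat\lambda$ of Proposition \ref{prop:zhat}, and proving convergence of the scheme via the positive-probability regeneration event $E_0$ together with a $0/1$ law). Unfortunately your version has a quantitative step that fails. You claim $\sum_n P\bigl(Z^{(n)}\neq Z^{(n+1)} \text{ on } [-n+T,\infty)\bigr)<\infty$. But since each $Z^{(n)}$ is started from the \emph{same} fixed reference configuration and the PRMs are stationary, the pair $(Z^{(n)},Z^{(n+1)})$ is, after shifting time by $n$, equal in law to $(Z^{(0)},Z^{(1)})$; hence the probability you are summing is a constant independent of $n$, and the series diverges unless that constant is zero. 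Without monotonicity (available for linear Hawkes processes, not here) the backward scheme does not stabilize for free; one needs exactly the regeneration-plus-$0/1$-law mechanism that the paper builds into its Picard argument via the events $E_t$, Lemma \ref{lem:ok} and Lemma \ref{NoJumpLemma}.

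The second gap is the one you flag yourself but do not close: after a simultaneous regeneration the \emph{ages} of the two processes coincide, but the \emph{memory processes} never become exactly equal — they forever carry different contributions from the differing pasts (and from $R^i$). So the two intensities do not ``coincide from that point on''; they differ by an amount controlled by $L\int_{-\infty}^{t-}h(s-u)\,\tilde Z(du)+L|R_s|$, and one must show that, conditionally on ${\mathcal F}_t$, the probability that this discrepancy never triggers an extra (uncoupled) jump is bounded below by $\ql\{E'_t\}\exp\bigl(-L\int_t^\infty(\cdots)\,ds\bigr)$, that the exponent is a.s.\ finite (this uses $\E(\hat\lambda_0)<\infty$ and $t\,h(t)\in{\mathcal L}^1$, via the computation in \eqref{lastintegrals}), and then invoke an ergodic coupling lemma (Lemma \ref{ergodiclemma}) to conclude coupling in finite time. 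A Borel--Cantelli argument over disjoint candidate windows with ``success probabilities bounded below'' does not substitute for this, because the success events are neither independent nor do they have probabilities bounded below uniformly in the conditioning on the accumulated memory; the lower bound degrades with the realized past, which is precisely what the conditional-expectation formulation of Lemma \ref{ergodiclemma} is designed to absorb. Your reduction of part (3) to part (2) plus stationarity is, on the other hand, exactly the paper's argument.
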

The proof of the above theorem will be given in the next subsection. An immediate corollary of it  is an ergodic theorem for  additive functionals of age dependent Hawkes processes depending only on a finite time horizon.  More precisely, let $ T > 0$ be a fixed time horizon and let $M_T $ be the set of all bounded measures defined on $ ]- T, 0 ], $ equipped with its Borel $\sigma-$algebra $ {\cal M}_T $ (see Appendix).

\begin{koro}
Grant \Cref{as1}. Let $ (Z, X, A) $ be the stationary age dependent Hawkes process and let $\check{Z}$ be as in Item 2.\ of Theorem \ref{Stab}. Let $ f : M_T \to \R$ be any measurable function such that
\begin{equation}\label{eq:intz}
\mu ( f) := {\mathbb E}  f (( Z_{|\,  ]-T, 0 ]} ) ) < \infty .
\end{equation}
Then
\begin{equation}\label{eq:erg}
 \frac1t \int_0^t  f (( \theta^s  \check Z_{|\,  ]-T, 0 ]} ) )ds = \frac1t \int_0^t  f ((   \check Z_{| \, ]s-T, s ]} ) )ds \to \mu ( f)
\end{equation}
almost surely, as $t \to \infty .$
\end{koro}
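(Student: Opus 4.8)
The plan is to deduce the ergodic statement for the stationary process $(Z,X,A)$ first, and then transfer it to $\check Z$ using the eventual coupling from item 2 of Theorem \ref{Stab}. For the stationary process, recall from the Remark following the definition of compatibility that compatible random measures form a stationary and ergodic $N$-tuple under the shift $\theta$. The map $z \mapsto f(z_{|(-T,0]})$, composed with the compatibility map $H$, exhibits $s \mapsto f(\theta^s Z_{|(-T,0]})$ as a stationary process that is a measurable functional of the ergodic flow $(\theta^s \pi^1,\dots,\theta^s\pi^N)$; hence by Birkhoff's ergodic theorem, under the integrability hypothesis \eqref{eq:intz},
\begin{equation*}
\frac1t \int_0^t f(\theta^s Z_{|(-T,0]})\,ds \to \mathbb{E} f(Z_{|(-T,0]}) = \pi(f) \quad \text{a.s. as } t \to \infty.
\end{equation*}
One should check that $s \mapsto f(\theta^s Z_{|(-T,0]})$ is jointly measurable so that the time integral makes sense; this follows from measurability of $H$ and of the restriction maps on $M_{\R_-}$, together with right-continuity properties of the flow.

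Next I would use item 2 of Theorem \ref{Stab}: almost surely there is a (random) $t_0 \in \R_+$ with $\check Z_{|[t_0,\infty)} = Z_{|[t_0,\infty)}$. Consequently, for every $s \geq t_0 + T$ the restrictions $\check Z_{|(s-T,s]}$ and $Z_{|(s-T,s]}$ coincide, so $f(\theta^s \check Z_{|(-T,0]}) = f(\theta^s Z_{|(-T,0]})$ for all such $s$. Writing
\begin{equation*}
\frac1t\int_0^t f(\theta^s \check Z_{|(-T,0]})\,ds - \frac1t\int_0^t f(\theta^s Z_{|(-T,0]})\,ds = \frac1t\int_0^{(t_0+T)\wedge t}\bigl(f(\theta^s\check Z_{|(-T,0]}) - f(\theta^s Z_{|(-T,0]})\bigr)\,ds,
\end{equation*}
the right-hand side is an integral over a bounded (random) interval divided by $t$, hence tends to $0$ almost surely as $t \to \infty$, provided the integrand is almost surely integrable on that bounded interval. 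Combining this with the convergence for the stationary process yields \eqref{eq:erg}.

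The main obstacle is the integrability issue just flagged: a priori $f$ is only assumed integrable \emph{in expectation} against the stationary law, so $s \mapsto f(\theta^s \check Z_{|(-T,0]})$ need not be locally integrable on the initial segment $[0,t_0+T]$ for a general measurable $f$ taking values in $\R$ (not $\R_+$). To handle this cleanly I would either (i) first prove the result for nonnegative $f$ and bounded $f$, where no such difficulty arises and a monotone/truncation argument upgrades to general integrable $f$ once one observes that the stationary averages converge and the discrepancy term is negligible; or (ii) note that by stationarity and \eqref{eq:intz}, $\int_0^{t_0+T}|f(\theta^s Z_{|(-T,0]})|\,ds < \infty$ a.s. (the expectation of $\int_0^{T'}|f(\theta^s Z)|ds$ is $T'\,\mathbb{E}|f(Z_{|(-T,0]})|<\infty$ for any fixed $T'$, and $t_0$ is a.s. finite), and separately that the path of $\check Z$ on the finite horizon $[0,t_0+T]$ only involves finitely many jumps a.s. by Proposition \ref{prop:13}, so $f$ evaluated along it is bounded on that segment — here one uses that $M_T$-valued functionals of a measure with finitely many atoms in a compact window are well-controlled. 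Either route closes the gap; option (i) is the more robust.
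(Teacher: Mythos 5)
Your argument is exactly the paper's: Birkhoff's ergodic theorem applied to the stationary, ergodic (because compatible) process $Z$ gives the convergence for $\frac1t\int_0^t f(\theta^s Z_{|(-T,0]})\,ds$, and the eventual coupling $\check Z_{|[t_0,\infty)}=Z_{|[t_0,\infty)}$ makes the two integrands agree for $s\geq t_0+T$, so the difference of the time averages vanishes. The integrability caveat on the initial segment that you flag is not addressed in the paper's (three-line) proof, so your discussion of it is a refinement rather than a divergence; no correction is needed.
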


\begin{proof}
By ergodicity of $ (Z, X, A) $, it holds that
$$ \frac1t \int_0^t  f (( \theta^s   Z_{|\,  ]-T, 0 ]} ) )ds = \frac1t \int_0^t  f ((   Z_{|\,  ] s-T,  s ]} ) )ds \to \mu ( f). $$
Since $\check{Z}_{| [t_0, \infty) } = Z_{| [t_0, \infty)  } ,$ we have that
$$ f (( \theta^s  \check Z_{| \, ]-T, 0 ]} ) ) = f (( \theta^s  Z_{|\,  ]-T, 0 ]} ) ) \mbox{ for all } s  \geq t_0 + T,$$
which implies \eqref{eq:erg}.
\end{proof}

\subsection{Proof of Theorem \ref{Stab}}
This section is devoted to the proof of Theorem \ref{Stab}, and we will thus work under  \Cref{as1}.

The proof of the existence part relies on the Picard iteration
\begin{align}
X^{n,i}_{t}&=\sum_{j=1}^{N}\int_{-\infty}^{t-}h_{ij}\lp t-s\rp Z^{n-1,j} {(ds)} , \nonumber\\
Z^{n,i}\lp t_{1},t_{2}\rf&=\int_{t_{1}}^{t_{2}}\int_{0}^{\infty} \ql\lb z\leq \psi^{i}\lp X^{n,i}_{s},A_{s}^{n,i}\rp\rb \pi^{i}\lp ds,dz\rp ,\quad t_{1}<t_{2}\in \R ,\label{eq:picardheuristics}
\end{align}
where $A^{n,j}$ is the age process of $Z^{n,j}$. The Picard
iteration is similar to the one found in \cite{bm}, but since the
system has an age variable and the intensity is not neccesarily
bounded, the following issues must be addressed before proving convergence.
\begin{itemize}
	\item We need to produce an integrable intensity  $\hat{\lambda}$ that a priori dominates the intensities $\psi^i(X^{n,i}_t, A^{n,i}_t)$. This is done in \Cref{prop:zhat}.
	\item Using the Lipschitz part of \eqref{psias1}, we will construct events $E_t\in \mathcal{F}_t$ for all $t\in \R$ such that $A^{n,i}_t=A^{n+1,i}_t$ on $E_t,$ for all $n\in \N$, $i\leq N$. This is done in \Cref{lem:ok}.
	\item We need to ensure that the $i-$th iteration is well defined, i.e.\ for a given $X^{n,i}$ there exist $Z^{n,i}, A^{n,i}$ such that \eqref{eq:picardheuristics} is satisfied. This is done in \Cref{NoJumpLemma}.
\end{itemize}
Finally we combine these results to complete the proof of \Cref{Stab}. We start with the following useful result which provides bounds on the intensities.
\begin{lm}
	\label{Xbound}
Let $K,\delta$ be the constants from \eqref{gassump}. Let  $X$ be a predictable stochastic process and assume that $(Z,A)$ solves the system
\begin{align*}
Z\lp t_{1},t_{2}\rf =\int_{t_{1}}^{t_{2}}\int_{0}^{\infty} \ql \lb z\leq   \psi \lp X_{s},A_{s}\rp\rb  \pi \lp ds,dz\rp,\quad t_{1}\leq t_{2}\in \R ,
\end{align*}
where $A$ is the age process of $Z$ and where $\psi$ satisfies \eqref{psias1} and \eqref{gassump}.
Suppose moreover that \eqref{eq:hbar} is satisfied. Then almost surely, for any $1 \le i,j\leq N$, $t_{1}\leq t_{2}, $
$$Y_{ij}\lp t_{1},t_{2}\rp=\int_{-\infty}^{t_{1}-} h_{ij}\lp t_{2}-s \rp Z(ds)$$ is well-defined  and
\begin{align}\label{eq:yineq}
\lv Y_{ij}\lp t_{1},t_{2}\rp\rv \leq  \sum^{\infty}_{k=0}\overline{h}_{ij}\lp   t_{2}-t_{1} + A_{t_1} + k \delta\rp+\int_{-\infty}^{t_{1}- A_{t_1} } \overline{h}_{ij}\lp t_{2}-s\rp  \pi_{K}\lp d s\rp.
\end{align}
Moreover,
$$ \E \int_{-\infty}^{t} \overline{h}_{ij}\lp t-s\rp  \pi_{K}\lp d s\rp < \infty $$
for all $t .$
\end{lm}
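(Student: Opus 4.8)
The plan is to reduce everything to a pathwise estimate on $\int_{-\infty}^{u_0}\overline{h}_{ij}(t_2-s)\,Z(ds)$, where $u_0:=t_1-A_{t_1}$ denotes the last jump time of $Z$ strictly before $t_1$ (if $Z$ has no jump before $t_1$, then $Z$ vanishes on $(-\infty,t_1)$ and all assertions are trivial; note that non-explosion of $Z$ guarantees that this last jump time exists). Since $\overline{h}_{ij}$ dominates $|h_{ij}|$ and $Z$ charges no point of $(u_0,t_1)$, one has $Y_{ij}(t_1,t_2)=\int_{-\infty}^{u_0}h_{ij}(t_2-s)\,Z(ds)$, so both the well-definedness of $Y_{ij}(t_1,t_2)$ and the bound \eqref{eq:yineq} follow once the above integral is shown to be finite and bounded as claimed.

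The key observation is that assumption \eqref{gassump} pins the intensity down to at most $K$ for $\delta$ time units after every jump. Precisely: if $\tau$ is a jump time of $Z$ whose immediately preceding jump occurred at distance strictly less than $\delta$, then the age satisfies $A_\tau<\delta$ (recall that $A$ is c\`agl\`ad, so $A_\tau$ equals the time elapsed since the previous jump), hence $\psi(X_\tau,A_\tau)\le K$, so the atom of $\pi$ realizing this jump has first coordinate $\le K$; in other words $\tau$ is an atom of $\pi_K$. I would then partition $(-\infty,u_0]$ along the $\delta$-lattice anchored at $u_0$, i.e.\ into the cells $J_k:=(u_0-(k+1)\delta,\,u_0-k\delta]$, $k\ge 0$. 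In each $J_k$ that carries a jump, single out the leftmost one, $\tau_k$; since $\tau_k\le u_0-k\delta$ and $\overline{h}_{ij}$ is decreasing, its contribution to the integral is at most $\overline{h}_{ij}(t_2-u_0+k\delta)=\overline{h}_{ij}(t_2-t_1+A_{t_1}+k\delta)$. Every other jump $\tau\in J_k$ is preceded, at distance $<\delta$ (the length of $J_k$), by a jump lying in the same cell, hence by the key observation is an atom of $\pi_K$ in $(-\infty,u_0]$. Summing over $k$ and using $\overline{h}_{ij}\ge 0$ gives exactly
\begin{align*}
\int_{-\infty}^{u_0}\overline{h}_{ij}(t_2-s)\,Z(ds)\ \le\ \sum_{k=0}^{\infty}\overline{h}_{ij}\big(t_2-t_1+A_{t_1}+k\delta\big)\ +\ \int_{-\infty}^{u_0}\overline{h}_{ij}(t_2-s)\,\pi_K(ds),
\end{align*}
which is \eqref{eq:yineq}.

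It then remains to check that this right-hand side is a.s.\ finite. The series is finite because $\overline{h}_{ij}$ is decreasing and lies in $\mathcal{L}^1$: writing $c:=t_2-t_1+A_{t_1}>0$ (the age is strictly positive), a decreasing function obeys $\sum_{k\ge 0}\overline{h}_{ij}(c+k\delta)\le\overline{h}_{ij}(c)+\delta^{-1}\|\overline{h}_{ij}\|_{\mathcal{L}^1}<\infty$. For the $\pi_K$-integral, enlarging $(-\infty,u_0]$ to $(-\infty,t]$ and applying the Campbell (Fubini) formula for the Poisson random measure $\pi_K$, whose intensity is $K\,ds$, yields
\begin{align*}
\E\int_{-\infty}^{t}\overline{h}_{ij}(t-s)\,\pi_K(ds)\ =\ K\int_{0}^{\infty}\overline{h}_{ij}(u)\,du\ =\ K\,\|\overline{h}_{ij}\|_{\mathcal{L}^1}\ <\ \infty,
\end{align*}
which proves the ``Moreover'' statement and, in particular, the a.s.\ finiteness of the second term; combined with the previous display this gives the a.s.\ absolute convergence, i.e.\ well-definedness, of $Y_{ij}(t_1,t_2)$.

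I expect the only real subtlety to be the combinatorial bookkeeping of the decomposition: checking that the atoms of $Z$ in $(-\infty,u_0]$ split into the ``leftmost-in-its-cell'' atoms, of which there is at most one per lattice cell (feeding the grid series), and the atoms lying within distance $\delta$ of their predecessor (feeding the $\pi_K$-integral), together with handling the c\`agl\`ad convention so that the intensity entering \eqref{gassump} is genuinely the one just before the jump. The remaining estimates are routine.
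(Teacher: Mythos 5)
Your proof is correct and follows essentially the same route as the paper's: both rest on the observation that a jump whose age is below $\delta$ must, by \eqref{gassump}, be an atom of $\pi_K$, while the remaining jumps are sparse enough to be dominated by the lattice sum $\sum_{k}\overline{h}_{ij}(t_2-t_1+A_{t_1}+k\delta)$, and the expectation bound via the intensity $K\,ds$ of $\pi_K$ is identical. The only difference is bookkeeping: the paper splits the atoms of $Z$ according to whether the age at the jump is $\geq\delta$ or $<\delta$ and uses that the former are $\delta$-separated, whereas you split into leftmost-in-cell versus the rest on a fixed $\delta$-lattice anchored at $t_1-A_{t_1}$; both decompositions yield the stated bound.
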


\begin{koro}\label{cor:hbounded}
If we suppose in addition that $ \bar h (0) < \infty ,$ then
\begin{equation}
 {\mathbb E} Y _{ij}\lp t,t \rp \le K \int_0^\infty \bar h ( u ) du + \sum_{k\geq 0 } \bar h ( k \delta ) < \infty .
\end{equation}

\end{koro}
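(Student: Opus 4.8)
The plan is to specialize the estimate \eqref{eq:yineq} of Lemma \ref{Xbound} to the diagonal case $t_1 = t_2 = t$ and then take expectations term by term. With $t_2 - t_1 = 0$, \eqref{eq:yineq} reads
\[
\lv Y_{ij}(t,t)\rv \le \sum_{k=0}^{\infty}\overline{h}_{ij}\lp A_t + k\delta\rp + \int_{-\infty}^{t - A_t}\overline{h}_{ij}(t-s)\,\pi_K(ds),
\]
so that $\mathbb{E}\,Y_{ij}(t,t) \le \mathbb{E}\,\lv Y_{ij}(t,t)\rv$ is bounded by the sum of the expectations of the two terms on the right.

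For the deterministic series I would use that $\overline{h}_{ij}$ is non-increasing and $A_t \ge 0$, so that $\overline{h}_{ij}(A_t + k\delta) \le \overline{h}_{ij}(k\delta) \le \overline{h}(k\delta)$, the last step because $\overline{h} = \sum_{i,j}\overline{h}_{ij}$. The additional hypothesis $\overline{h}(0) < \infty$ enters exactly here: it makes the $k=0$ term finite even though the age may be arbitrarily small. Combined with the fact that $\overline{h}$ is decreasing and integrable by \eqref{eq:hbar2}, the elementary comparison $\delta\sum_{k\ge 1}\overline{h}(k\delta) \le \int_0^{\infty}\overline{h}(u)\,du$ shows $\sum_{k\ge 0}\overline{h}(k\delta) < \infty$, and this deterministic constant bounds the first term.

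For the stochastic term, since the integrand is non-negative and $t - A_t \le t$, I would enlarge the range of integration to $(-\infty,t]$ and then use that $\pi_K$ is a PRM with intensity $K\,ds$ on $\R$, so its mean measure is $K\,ds$ (Campbell's formula):
\[
\mathbb{E}\int_{-\infty}^{t - A_t}\overline{h}_{ij}(t-s)\,\pi_K(ds) \le \mathbb{E}\int_{-\infty}^{t}\overline{h}_{ij}(t-s)\,\pi_K(ds) = K\int_0^{\infty}\overline{h}_{ij}(u)\,du \le K\int_0^{\infty}\overline{h}(u)\,du,
\]
which is finite by \eqref{eq:hbar2} (this finiteness is also already recorded at the end of Lemma \ref{Xbound}). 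Adding the two bounds gives the claimed inequality.

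There is essentially no obstacle here; all the content is in Lemma \ref{Xbound}. The only two points deserving a word of care are (i) observing that $\overline{h}(0) < \infty$ is precisely what controls the $k=0$ boundary term, since the age process is c\`agl\`ad and can be arbitrarily close to $0$, so that $\overline{h}_{ij}(A_t)$ would otherwise be uncontrolled; and (ii) justifying the exchange of expectation and integral against $\pi_K$, which is legitimate because the integrand $\overline{h}_{ij}(t-\cdot)$ is a fixed non-negative deterministic function, so Campbell's theorem (equivalently, monotone convergence applied to the mean measure) applies directly, with no independence between $\pi_K$ and $A_t$ needed.
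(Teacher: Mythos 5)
Your proof is correct and is exactly the argument the paper intends: the corollary is an immediate consequence of \eqref{eq:yineq} with $t_1=t_2=t$, bounding the deterministic series via $A_t\ge 0$ and monotonicity of $\overline{h}_{ij}$ (with $\overline{h}(0)<\infty$ handling the $k=0$ term), and the stochastic term via the mean-measure computation already carried out at the end of the proof of Lemma \ref{Xbound}. The paper gives no separate proof of the corollary, and your write-up supplies precisely the missing routine steps, including the correct justification of Campbell's formula for the deterministic integrand.
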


\begin{proof}[Proof of Lemma \ref{Xbound}]
 For any $i,j\leq N$, $t\leq t_{2}$ we have
\begin{align*}
\lv Y_{ij}\lp t,t_{2}\rp\rv &\leq \int_{-\infty}^{t-} \overline{h}_{ij}\lp t_{2}-s \rp\ql\lb A_{s}\geq \delta\rb Z (ds) +\int_{-\infty}^{t-} \overline{h}_{ij}\lp t_{2}-s \rp\ql\lb A_{s}< \delta\rb Z (ds)\\
&= \int_{-\infty}^{t-A_t} \overline{h}_{ij}\lp t_{2}-s \rp\ql\lb A_{s}\geq \delta\rb Z (ds) +\int_{-\infty}^{t-A_t} \overline{h}_{ij}\lp t_{2}-s \rp\ql\lb A_{s}< \delta\rb Z (ds)\\
&\leq \int_{-\infty}^{t-A_t} \overline{h}_{ij}\lp t_{2}-s \rp G (ds)+\int_{-\infty}^{t-A_t} \overline{h}_{ij}\lp t_{2}-s \rp \pi_{K}\lp ds \rp\\
&:=\hat{Y}_{ij}\lp t,t_{2}\rp+\tilde{Y}_{ij}\lp t,t_{2}\rp ,
\end{align*}
where $G (dt) =\ql \lb A_{t}\geq \delta \rb Z (dt) $. Define now for fixed
$t\in \R $ and for all $ l \in \mathbb{N}, $ $\tau_{l}(t)  := \sup \{
s < \tau_{l-1}(t)  : \Delta G_s = 1 \} , $ where we have put $ \tau_0
(t)  := t-A_t .$ Thus, $ \tau_l (t) $ is the $l$th jump-time of $G$
before $t - A_t  $ -- which is itself the last jump-time of $ Z$ strictly before time $t.$

We may upper bound $\hat{Y}_{ij}$ by
\begin{align*}
\hat{Y}_{ij}\lp t,t_{2}\rp \le  \sum_{l=0}^{G  (- \infty , t ) } \overline{h}_{ij}\lp t_{2}-\tau_{l} \lp t \rp\rp.
\end{align*}
Since $\tau_{l}\lp t \rp-\tau_{l-1}\lp t \rp\geq \delta$ by construction of $G$ and since $\overline{h}_{ij}$ is decreasing, we get the bound
\begin{align*}
\hat{Y}_{ij}\lp t,t_{2}\rp\leq  \sum_{l=0}^{\infty}\overline{h}_{ij}\lp t_2 - t + A_t+ l\delta \rp.
\end{align*}
Note that almost surely, $A_t$ never attains the value 0 for any $t\in
\R$, and in that event, each term in the above sum is finite for all
$t\leq t_2 \in \R$. Moreover, since $\overline{h}_{ij}$ is ${\mathcal L}^{1}$
and decreasing the sum is finite as well.  The expectation  $t\mapsto
{\mathbb E}\tilde{Y}\lp t,t \rp$ is given by
\begin{eqnarray*}
\E \tilde{Y}_{ij}\lp t,t\rp  &=&  \lim_{T\con \infty} {\mathbb E} \int_{-T}^{t- A_t} \overline{h}_{ij}\lp t-s\rp \pi_{K}\lp d s \rp \\
&\le&  \lim_{T\con \infty} {\mathbb E} \int_{-T}^{t} \overline{h}_{ij}\lp t-s\rp \pi_{K}\lp d s \rp=K\int_{0}^{\infty} \overline{h}_{ij}\lp u\rp du<\infty .
\end{eqnarray*}
\end{proof}

We now construct the dominating intensity, as mentioned in the start of the section. Recall that $ L \geq 1  $ is the Lipschitz constant appearing in \eqref{psias1} and let $K, \delta $ be the constants from
\eqref{gassump}, we suppose w.l.o.g. that $  K \geq c,$ where $c$ is the lower bound from \eqref{eq:doeblin}.

\begin{prop}\label{prop:zhat}
Let $ C \geq \max \left\{ 1+ \sum_{k\geq 1 } \bar h ( k \delta ) , K   \right\} .$
There exists a compatible process $( \hat Z , \hat A , \hat \lambda) $ which is defined for any $t \in \R $ by
\begin{equation}\label{eq:hatlambda}
\hat \lambda_t = L \left( C + \int_{-\infty}^{t-} \bar h ( t- s) \pi_{NK} (ds)  + \bar h ( \hat A_t)\right) ,
\end{equation}
where
\begin{equation}\label{eq:hatz}
\hat Z ( t_1, t_2 ] = \sum_{i=1}^N \int_{t_1}^{t_2}  \int_0^\infty  \ql \{ z \le \hat \lambda_s \} \pi^i  \lp ds,dz\rp
\end{equation}
for all $ t_1 \le t_2, $  together with its age process $ \hat A_t.$ Moreover, we have that
\begin{equation}
\E ( \hat \lambda_t ) < \infty .
\end{equation}

\end{prop}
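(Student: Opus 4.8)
The plan is to realize $\hat Z$ as the increasing limit of finite-horizon approximations. For $n\in\N$ I would set $U_t^{(n)} := \int_{-n}^{t-}\bar h(t-s)\,\pi_{NK}(ds)$ and construct, by the usual iteration over jump times (as in the proof of Proposition~\ref{prop:13}), a process $\hat Z^{(n)}$ on $[-n,\infty)$ with intensity $N\hat\lambda^{(n)}$, where $\hat\lambda^{(n)}_t = L\bigl(C+U_t^{(n)}+\bar h(\hat A^{(n)}_t)\bigr)$ and $\hat A^{(n)}$ is the age of $\hat Z^{(n)}$, started with no jump before $-n$ (so $\bar h(\hat A^{(n)}_{-n})=0$); a priori this is defined only up to a possibly finite explosion time $\zeta^{(n)}$. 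Since the map sending a past jump configuration and an exogenous input to the intensity is non-decreasing — a later last jump makes $\hat A$ smaller, hence $\bar h(\hat A)$ larger, and $U^{(n)}_t\le U^{(n+1)}_t$ — the standard coupling driven by the common PRMs gives $\hat Z^{(n)}\le \hat Z^{(n+1)}$ pathwise, and I would set $\hat Z:=\lim_n \hat Z^{(n)}$. Granting the estimates below, monotone convergence in the thinning integral~\eqref{eq:hatz} then shows that $\hat Z$ is locally finite, that it solves~\eqref{eq:hatlambda}--\eqref{eq:hatz} with $\hat A_t$ its age and $\hat\lambda_t=L\bigl(C+\int_{-\infty}^{t-}\bar h(t-s)\pi_{NK}(ds)+\bar h(\hat A_t)\bigr)=\lim_n\hat\lambda^{(n)}_t$, and that $(\hat Z,\hat A,\hat\lambda)$ is compatible, being a shift-covariant measurable limit of functionals of $(\pi^1,\dots,\pi^N)$ as in~\cite{bm}.

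The heart of the matter is a consequence of the age reset. Write $\|\bar h\|_1:=\int_0^\infty\bar h(v)\,dv$. If $\sigma$ is a jump time of $\hat Z^{(n)}$, then on $(\sigma,\cdot]$ up to the next jump the age equals $\cdot-\sigma$, so the intensity there is at least $NL\,\bar h(\cdot-\sigma)$; a Gr\"onwall estimate for $u\mapsto\ql\{\hat Z^{(n)}((\sigma,u])=0\}$ therefore gives
\[
P\bigl(\hat Z^{(n)}((\sigma,\sigma+a])=0\,\big|\,\mathcal F_\sigma\bigr)\;\le\;\exp\!\Bigl(-NL\!\int_0^a\bar h(v)\,dv\Bigr),
\]
whence $\E\bigl[\int_\sigma^{\sigma+\Delta}\bar h(s-\sigma)\,ds\,\big|\,\mathcal F_\sigma\bigr]\le\int_0^\infty\bar h(v)\,e^{-NL\int_0^v\bar h}\,dv=\tfrac1{NL}\bigl(1-e^{-NL\|\bar h\|_1}\bigr)$, with $\Delta$ the waiting time to the next jump. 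Thus the kernel effectively governing the age feedback is $\kappa(v):=NL\,\bar h(v)\,e^{-NL\int_0^v\bar h}$ rather than $NL\,\bar h$, and $\|\kappa\|_1=1-e^{-NL\|\bar h\|_1}<1$ with no condition whatsoever on $L$ or $\bar h$: only the most recent jump feeds back, and it remains the most recent for a duration $v$ with probability decaying like $e^{-NL\int_0^v\bar h}$.

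Both non-explosion of $\hat Z^{(n)}$ and a uniform bound on $\E\hat\lambda^{(n)}$ follow from this. Writing $\beta_k:=\E\,\hat Z^{(n)}\bigl((-n,\,t\wedge\zeta^{(n)}_k]\bigr)$ for the $k$-th jump time $\zeta^{(n)}_k$, summing the previous display over the jumps before $t\wedge\zeta^{(n)}_k$ (the age contributing $0$ before the first jump) and using $\E U^{(n)}_s\le NK\|\bar h\|_1=:u_0$, the compensator identity $\beta_k=\E\!\int_{-n}^{t\wedge\zeta^{(n)}_k}NL\bigl(C+U^{(n)}_s+\bar h(\hat A^{(n)}_s)\bigr)ds$ becomes $\beta_k\le NL(C+u_0)(t+n)+(1-e^{-NL\|\bar h\|_1})\beta_k$, hence $\beta_k\le NL(C+u_0)(t+n)e^{NL\|\bar h\|_1}$ uniformly in $k$; letting $k\to\infty$ yields $\zeta^{(n)}=\infty$ a.s. Knowing this, the a.s.\ identity $\bar h(\hat A^{(n)}_t)=\int_{(-\infty,t)}\bar h(t-s)\,\ql\{\hat Z^{(n)}((s,t])=0\}\,\hat Z^{(n)}(ds)$ — only the last jump before $t$ contributes — conditioned term-by-term on $\mathcal F_s$ and combined with the compensator formula (the integrand being deterministic) gives, for $\phi^{(n)}(t):=\E\hat\lambda^{(n)}_t$ (locally integrable by the above), the Volterra inequality $\phi^{(n)}(t)\le L(C+u_0)+\int_{-n}^t\kappa(t-s)\,\phi^{(n)}(s)\,ds$. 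Since $\|\kappa\|_1<1$, iterating bounds $\phi^{(n)}(t)\le L(C+u_0)\sum_{m\ge0}\|\kappa\|_1^m=L(C+NK\|\bar h\|_1)\,e^{NL\|\bar h\|_1}$, uniformly in $n$ and $t$. Monotone convergence then gives $\E\hat\lambda_t=\lim_n\phi^{(n)}(t)\le L(C+NK\|\bar h\|_1)\,e^{NL\|\bar h\|_1}<\infty$, and also $\E\,\hat Z((t_1,t_2])<\infty$, as required.

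The main obstacle is precisely the replacement of $NL\bar h$ by $\kappa$ in the age estimate. Bounding the age term crudely by the contribution of all past jumps, $\bar h(\hat A^{(n)}_t)\le\int_{-\infty}^{t-}\bar h(t-s)\,\hat Z^{(n)}(ds)$, yields a Volterra/renewal inequality with kernel $NL\bar h$, which closes only under the subcriticality condition $NL\|\bar h\|_1<1$ — precisely the Lipschitz-type restriction that age dependence is meant to eliminate. Keeping track that only the last jump influences $\hat A$, and controlling its ``survival'' up to time $t$ by the compensator mass $NL\int_0^{t-s}\bar h$ produced by the age-driven part of the intensity right after a reset — legitimate because that intensity is at least $NL\,\bar h(\mathrm{age})$ — replaces the kernel by $\kappa$, whose $\mathcal L^1$-norm is subcritical unconditionally.
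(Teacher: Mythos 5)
Your construction is correct, and it takes a genuinely different route from the paper's. The paper exploits the lower bound $\hat \lambda_t \geq LC \geq K$: every atom of $\pi_{NK}$ is a jump of $\hat Z$ and resets the age to $0$, so the process regenerates along a homogeneous Poisson process and is built by pasting solutions between consecutive atoms (non-explosion in between coming from domination by a linear Hawkes process), which makes uniqueness and compatibility immediate; the bound $\E (\hat \lambda_t) < \infty$ is then obtained from stationarity via the identity $ {\cal L} ( \hat A_0 ) = {\cal L} (T_1) $ and the explicit survival function of $T_1$, and this is precisely where the hypothesis $\bar h \in {\cal L}^2$ is used. You instead take a monotone limit of finite-horizon versions and close all estimates with the observation that only the most recent jump feeds back into the age term, and that it remains most recent for a duration $v$ with probability at most $e^{ - NL \int_0^v \bar h }$, because after a reset the total intensity is at least $N L \bar h( \cdot - \sigma)$; this replaces the naive kernel $NL \bar h$ (which would require the subcriticality condition $NL \| \bar h \|_1 < 1$) by $\kappa (v) = NL \bar h (v) e^{ - NL \int_0^v \bar h }$ with $\| \kappa \|_1 = 1 - e^{ - NL \| \bar h\|_1 } < 1$ unconditionally. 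Your route buys an explicit non-asymptotic bound $\E (\hat \lambda_t) \le L ( C + NK \| \bar h \|_1 ) e^{ NL \| \bar h \|_1 }$ that uses neither stationarity nor the ${\cal L}^2$ assumption on $\bar h$; its cost is the extra bookkeeping in the limiting procedure (the monotone coupling $\hat Z^{(n)} \le \hat Z^{(n+1)}$, the identification of $\lim_n \bar h ( \hat A^{(n)}_t ) $ with $ \bar h ( \hat A_t)$ -- which does hold since the last jump of $\hat Z$ before $t$ is eventually a jump of $\hat Z^{(n)}$ -- and the usual null-set care at the boundary $z = \hat \lambda_s$ in the thinning integral), all of which the paper's regeneration argument avoids. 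Two small points to tidy up: the indicator in your identity for $\bar h ( \hat A^{(n)}_t)$ should read $\ql \{ \hat Z^{(n)} ( (s,t) ) = 0 \}$ (open interval) so as not to lose the last jump when $t$ itself is a jump time, and the pointwise conclusion of the Volterra iteration should be stated as holding a.e.\ first and then upgraded by one more substitution into the inequality; neither affects the argument.
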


\begin{proof}
By construction, $\hat \lambda_t \geq K $ for all $t$, and therefore,
any jump time $\tau $ of $\pi^i_K $ is also a jump of
  $\hat Z.$ Hence, at $\tau ,$ the age process $ \hat A_t $ is reset to
  $0.$  It is therefore possible to construct a unique solution to
  \eqref{eq:hatlambda} on $t \in \lp \tau,\infty\rp $. This solution
  is non-exploding since the process is stochastically dominated by a
  classical linear Hawkes process $Z' $  having intensity $  L \left( C +
    \int_{-\infty}^{\tau - }  \bar h ( t- s) \pi_{NK} (ds)  + 2  \int_\tau^{t-}  \bar
    h (t - s )   Z' (d s ) \right) $ which is non-exploding by Proposition \ref{prop:13} since $
  \bar h \in {\cal L}^1.$ A  solution on the entire real line  may be
  constructed by pasting together the solutions constructed in between
  the successive jump times of $ \pi_{NK}.$ It is unique and compatible by construction.

It remains to prove that $ \E ( \hat \lambda_t) < \infty .$ Due to stationarity, it is sufficient to prove that $ \E \bar h ( \hat A_0)  < \infty .$ Also from stationarity, writing $ T_1 $ for the first jump time of $\hat Z $ after time $0, $ it follows that $ {\cal L} (T_1 ) = {\cal L} ( \hat  A_0 ) .$
It follows from \Cref{compensator} in the Appendix that
$$ P ( T_1 > t ) = P\lp Z[0,t] = 0 \rp = \E \left( \exp \left(  -
    \int_0^t ( L C + L \xi_s + L \bar h ( \hat A_0 + s ) )ds  \right) \right) ,$$
where $ \xi_s := \int_{ - \infty }^{0-} \overline h  (s-u) \pi_{NK} ( du ) ,$
implying that, since $ \hat A_0 \geq 0 $ and $ \bar h $ is decreasing,
\begin{align*}
\E ( \bar h ( \hat A_0 ) ) &= \int_0^\infty \E \left( \bar h ( t) e^{ - \int_0^t ( L C + L \xi_s + L \bar h ( \hat A_0 + s ) )  ds } (  L C + L \xi_t + L \bar h ( \hat A_0 + t ) )  \right)  dt \\
&\le L \int_0^\infty  \left(  \bar h ( t)^2 +  \bar h (t) \E ( \xi_0) + \bar h ( t) C \right)  dt < \infty ,
\end{align*}
since $ \bar h \in {\cal L}^1 \cap {\cal L}^2  .$
\end{proof}

We now proceed to the construction of events $E_t$ which a priori will serve by coupling the age processes in the Picard iteration. Indeed, Assumption \eqref{eq:doeblin} will enable us to construct common jumps for any two point processes $ Z^1 , Z^2  $ having intensity $\psi ( \tilde X^1_t, A^1_t) $ and $ \psi ( \tilde X_t^2, A_t^2 ),$  where $ A^i_t $ is the age process of $ Z^i  $,  and $\tilde X^i_t $ is a predictable process such that
$$ \psi ( X^i_t, A^i_t ) \le \hat \lambda_t ,$$
for $ i = 1, 2 . $

Fix some $p > a^* $ such that
\begin{align}\label{eq:varrho}
 \sum_{k \geq 1 }   {L} \bar{h } ( p + k \delta ) < \frac{x^*}{3N},
\end{align}
where $a^* $ and $x^* $ are given in \eqref{eq:doeblin}, and fix some $M > LC $ where $L$ and $C$ are as in \eqref{eq:hatlambda}. Then necessarily $M \geq K \geq c  .$
Introduce for all $t \in \R $ the events
\begin{eqnarray}
E^1_t &:= &
\{ \pi^1 ( ds, [0, c ] ) \mbox{ has a unique jump $\tau^1$ in }  ( t - 2N p + p  ,  t- 2N p +  2  p ] )\} \nonumber \\
&& \cap  \bigcap_{j=1}^N \left\{ \int_{ t- 2N p    }^{  \tau^1 - } \int_{\R_+} \ql \{ z \le M \}  \pi^j \lp ds,dz\rp  =   0  \right\}  \nonumber \\
&& \cap \bigcap_{j =1}^N    \left\{ \int_{\tau^1}^{t - 2N p +  2  p} \int_{\R_+} \ql \{ z \le M +  {2L}\bar h (s - \tau^1) \} \pi^j \lp ds,dz\rp = 0 \right\} ,
\end{eqnarray}
and for all $ i = 2, \ldots , N ,$
\begin{eqnarray}\label{eq:ai}
E^i_t &:=& 
\{ \pi^i ( ds, [0, c ]) \mbox{ has a unique jump $\tau^i$ in }  ( t - 2N p + 2 ( i- 1) p + p  ,  t- 2N p +  2 i p ] )\} \nonumber \\
&&\cap  \bigcap_{j=1}^N \left\{ \int_{ t- 2N p +   2 (i-1) p }^{  \tau^i - } \int_{\R_+} \ql \{ z \le M + {2L} \bar h (s- (t- 2N p +   2 (i-1) p) ) \}  \pi^j \lp ds,dz\rp  =   0  \right\}  \nonumber \\
&& \cap \bigcap_{j =1}^N    \left\{ \int_{\tau^i}^{t - 2N p +  2 i p} \int_{\R_+} \ql \{ z \le M +  {2L}\bar h (s - \tau^i) \} \pi^j \lp ds,dz\rp = 0 \right\} ,
\end{eqnarray}
where the constant $c$ is given in \eqref{eq:doeblin}. This event
splits the interval $(t-2Np,t)$ up in intervals of length $2p$, where
the $i$th truncated PRM has exactly one jump in the second part, and
no other events (of truncated PRMs) occur.

To control the past up to time $t-2Np ,$ we also introduce the event
$$ E^0_t := \Big\{ \hat \lambda_{ t- 2N p } +x^{* }  \le {M} \Big\} \cap \Big\{  \int_{- \infty}^{  t- 2N p } \bar h ( t - 2N p - s) \pi_{NK } (ds) \le \frac{x^*}{3N} \Big\} $$
and put
\begin{equation}\label{eq:et}
E_t := \bigcap_{i=0}^N E_t^i .
\end{equation}
The event $E_{2Np}$ is illustrated in Figure \ref{fig:Et}, for
$N=2$ and $ \overline{h}\lp t \rp \approx t^{-0.4}$. The grey area is
the relevant part for the truncated PRMs.

\begin{figure}[H]
	\center
  \includegraphics[scale=0.65]{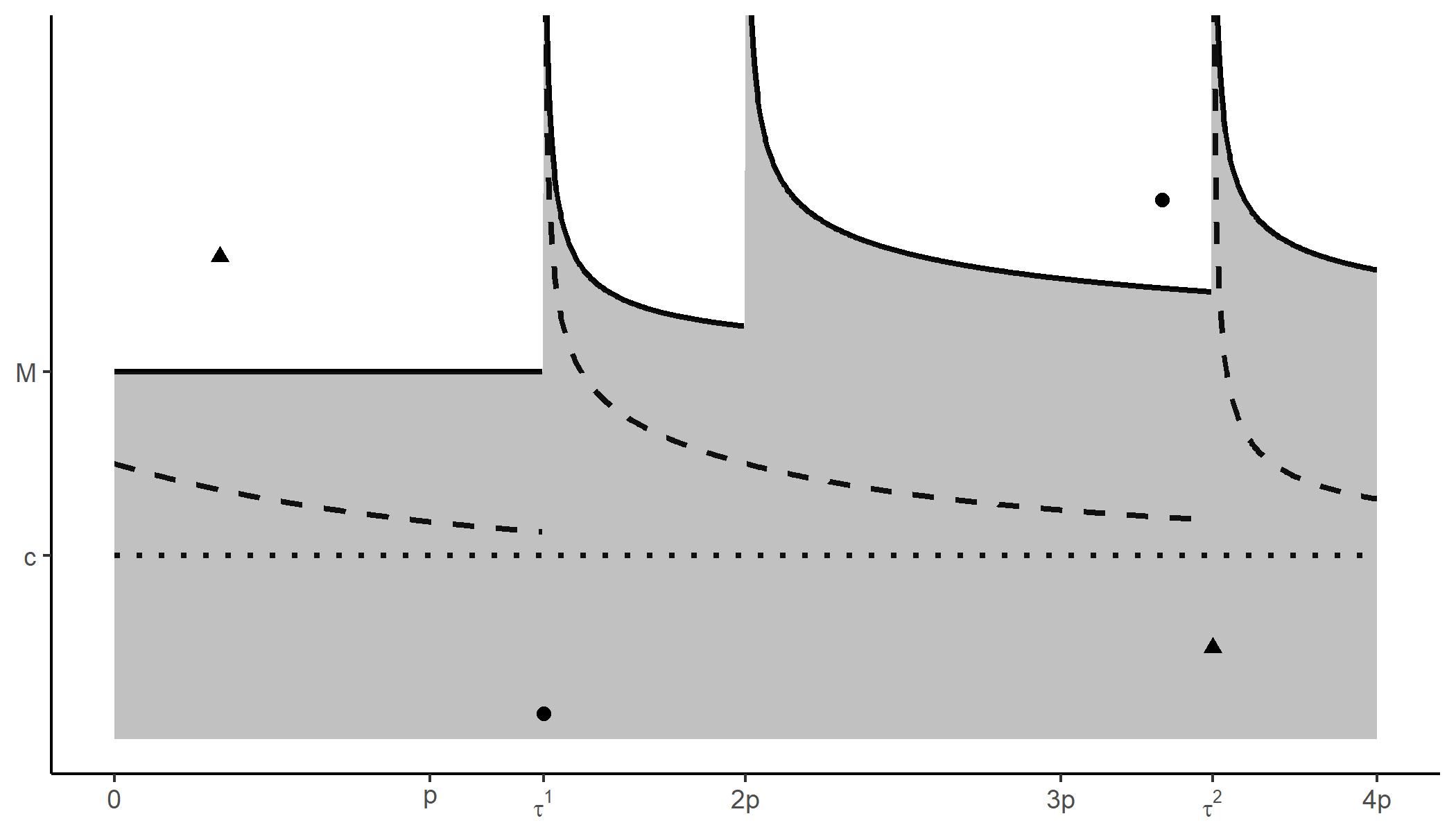}
  \caption[caption]{\label{fig:Et} {\small An illustration of the event $
    E_{2Np }$ with  $N=2$ and $ \overline{h}\lp t \rp \approx
    t^{-0.4}$. The figure shows a superposition of $\pi^{1 }\lp
    \bullet \rp,\pi^{2 }\lp \blacktriangle \rp$, including the jump times
    $\tau^1$ and $\tau^2$, and three curves:
\textbf{The dotted curve }is the constant   $c $. \textbf{The dashed curve} is the intensity process $\hat{\lambda } $. \textbf{The solid curve} is enclosing the area (in grey) of the plane that is relevant to the event $ E_{2Np }$, and it is given by $M  \mbox{ for } 0<s \le \tau^1,
	M + 2L\overline{h} (s-\tau^1) \mbox{ for } \tau^1 < s \le 2p ,\,
	M + 2L\overline{h} (s-2p)\mbox{ for } 2p<s \le \tau^2$ and $
	M + 2L\overline{h} (s-\tau^2) \mbox{ for } \tau^2 < s \le 4p
	$.
}}

\end{figure}

The main feature of the event $ E_{2Np}$ with $N=2$ is the fact that
  the process is forced to have some regeneration events during the
  intervals $ ] p, 2p ] $ and $ ] 3p, 4p ] .$ Indeed, on these
  intervals,
the corresponding age processes will have values larger than $ p >
a^*, $ and the associated memory processes will be bounded by
$ x^* , $ such that we can use \eqref{eq:doeblin}.

Let us return to the general definition of the events $E_t.$  Using induction and the strong Markov property, it follows from integrability of $ \bar
h$ that  $P\lp \bigcap_{i = 0}^{j }E^{i }_{t } \rp>0 $ for all $t\in \R ,j\leq N$. In particular  $P\lp E_{t } \rp>0 $.
Let us define
\begin{equation}\label{eq:yt}
 Y_t :=  \int_{-\infty }^{ t - \hat  A_t  }  \overline{h} (t-s) \pi_{NK }(ds)  + \sum_{ k=0}^{ \infty } \bar h ( \hat  A_t  + k \delta )  .
\end{equation}

We summarize the most important features of the event $E_t$ in the next lemma.

\begin{lm}\label{lem:ok}
On $E_t, $ for all $ 1 \le i \le N, $ each measure $ \pi^i (ds,  [0, c ]) $ has a jump at time $\tau^i \in (t- 2N p, t) $ such that
\begin{equation}\label{eq:1}
 \int_{ t- 2N p}^t \int_{\R_+} \ql \{ s \neq \tau^i , z \le \hat \lambda_s\} \pi^i (ds, dz ) = 0 ,
\end{equation}
\begin{equation}\label{eq:kl}
 \hat \lambda_{\tau^i } \le \hat \lambda_{ t - 2N p } + \frac{2i }{3N} x^*  ,
\end{equation}
\begin{equation}\label{eq:21} Y_{\tau^i } \le \frac{2i }{3N} x^* .  \end{equation}
Moreover,  $ | \tau^i - \tau^{i-1} | \geq a^* , $ where we put $\tau^0 = t - 2N p.$
In particular, take any two point processes $ Z^1 , Z^2$ having intensity $\psi ( \tilde X^1_t, A^1_t) $ and $ \psi ( \tilde X_t^2, A_t^2 ),$  where $ A^i_t $ is the age process of $ Z^i  $,  and $\tilde X^i_t $ is a predictable process such that
$$ \psi ( X^i_t, A^i_t ) \le \hat \lambda_t ,$$
for $ i = 1, 2 . $ It holds that $A^1_t=A^2_t$ under the event $E_t$.
\end{lm}

\begin{proof}
Let  $\lp \tau^{i } \rp_{i \leq N } $ be the jump times as given in the definition of $E_t$. By construction, the inter-distances are at least equal to $p $ and thus strictly larger than $a^*,$ since we chose $ p > a^*$. We shall prove by induction over $j $ that
$$ \int_{ t- 2N p}^{t- 2N p + 2jp} \int_{\R_+} \ql \{ s \neq \tau^i , z \le \hat \lambda_s\} \pi^i (ds, dz ) = 0\;\;\; \forall i \leq N$$
 as well as \eqref{eq:kl} and \eqref{eq:21}  hold for $i \in \{ 0,\dots,j \}  $ in the event $E_{t }$. The induction start is trivial, so assume that the assertion is true up to $j - 1 $. Notice that
 by the induction assumption $$ \hat{\lambda}_{s }  \leq  \hat \lambda_{ t - 2N p } + \frac{2(j-1) }{3N} x^* +2 L\overline{h}\lp s - \tau^{j - 1 } \rp ,  $$
for  $s  \geq \tau^{j - 1 } $ and  until the next jump of $\hat{Z}
$. It follows from the construction of $E^{j }\cap E^{j - 1 } $ that $\hat{Z }\lp \tau^{j- 1 },\tau^{j } \rp = 0 $. This proves the first claim. It also shows that  $\hat A_{\tau^{j } } >p $ so the properties of  $p $ gives
$2L\overline{h }\lp \tau^{j } - \tau^{j - 1 } \rp \leq \frac{2x^{* }
}{3N } $ which implies the remaining claims.

\end{proof}

 The next  result ensures that for a well-behaving process $X^{i }$ there exist couples $\lp Z^{i },A^{i } \rp $
 such that  $Z^{i }$ has intensity  $\psi^{i }\lp X^{i },A^{i } \rp  $
 and  $A^{i }$ is the age of  $ Z^{i}$.  The proof relies on a Picard iteration of \eqref{systembis} that alternately updates  $ \lp X^{i } \rp_{i \leq N } $ and  $ \lp Z^{i },A^{i } \rp_{i \leq N }$.
\begin{lm}\label{NoJumpLemma}
Let $( \hat Z , \hat A , \hat \lambda) $ be as in Proposition \ref{prop:zhat} and let
$\lp X^i_{t}\rp_{t\in \R},  1 \le i \le N , $ be compatible and predictable stochastic processes satisfying that almost surely,
\begin{equation}\label{eq:upper bound}
|X^i_t|  \le  Y_{t } ,
\end{equation}
for all $ 1 \le i \le N, $ $ t \in \R.$

Then there exist random counting measures $Z^i , 1 \le i \le N, $ on $\R$ which are compatible,  and compatible c\`agl\`ad processes $A^i , 1 \le i \le N, $ which almost surely satisfy
\begin{equation}\label{eqtime}
Z^i \lp B \rp = \int_{B}\int_{0}^{\infty} \ql\lb z\leq \psi^i  \lp X^i_{s},A^i_{s}  \rp \rb  \pi^i \lp ds,dz\rp,\quad \forall B\in {\mathcal B}\lp \R\rp ,
\end{equation}
for all $ 1 \le i \le N, $
where $A^i$ is the age process of $Z^i .$
\end{lm}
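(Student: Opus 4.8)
The strategy is a direct pathwise construction of the pair $(Z^i, A^i)$ from the PRM $\pi^i$, making it compatible by exhibiting $Z^i$ and $A^i$ explicitly as measurable functionals of the shifted PRMs. The main point is that the candidate intensity $\psi^i(X^i_s, A^i_s)$ is bounded above by $\hat\lambda_s$ (using \eqref{eq:upper bound}, \eqref{psias2} and the definitions \eqref{eq:hatlambda}, \eqref{eq:yt}), so every jump of the constructed process $Z^i$ is already a jump of $\hat Z$; hence only finitely many candidate jumps accumulate on any bounded interval, and the age process never blows up. First I would record the domination: since $|X^i_s| \le Y_s$ and $Y_s = \int_{-\infty}^{s-\hat A_s}\bar h(s-u)\pi_{NK}(du) + \sum_{k\ge 0}\bar h(\hat A_s + k\delta)$, sublinearity \eqref{psias2} gives $\psi^i(X^i_s, A^i_s) \le L(1 + Y_s) \le \hat\lambda_s$ for any candidate age process $A^i$, because $\hat\lambda_s = L(C + \int_{-\infty}^{s-}\bar h(s-u)\pi_{NK}(du) + \bar h(\hat A_s))$ and $C \ge 1 + \sum_{k\ge 1}\bar h(k\delta)$ while the integral over $(-\infty, s-)$ dominates the one over $(-\infty, s-\hat A_s)$ and $\bar h(\hat A_s) \ge $ the $k=0$ term. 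This makes the domination hold \emph{a priori}, independently of which $A^i$ we end up constructing.

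Next I would carry out the construction on a fixed realization (on the almost sure event where \eqref{eq:upper bound} holds and where $\hat Z$ is well-defined and non-exploding). Enumerate the jump times of $\hat Z$; on $\R$ these form a locally finite set $\cdots < \sigma_{-1} < \sigma_0 \le 0 < \sigma_1 < \cdots$ with no accumulation point. Between consecutive times $\sigma_m$ and $\sigma_{m+1}$, the process $\hat A$ evolves deterministically (it increases with slope one), and I proceed by induction along $m$: given the restriction of $Z^i$ to $(-\infty, \sigma_m]$ — hence its age $A^i_{\sigma_m+}$ and its value on $(\sigma_m, \sigma_{m+1}]$ — the process $A^i_s = A^i_{\sigma_m} + (s-\sigma_m)$ runs deterministically until the first atom $(z,s)$ of $\pi^i$ on $(\sigma_m, \sigma_{m+1}] \times \R_+$ with $z \le \psi^i(X^i_s, A^i_s)$; at that atom $Z^i$ jumps, $A^i$ resets to $0$, and we repeat within the same interval. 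Since $\psi^i(X^i_s, A^i_s) \le \hat\lambda_s$, the only atoms of $\pi^i$ below the candidate intensity lie below $\hat\lambda$, and there are finitely many such in $(\sigma_m, \sigma_{m+1}]$ because $\hat Z$ has no jump strictly inside that interval other than possibly at the endpoint — actually the atoms of $\pi^i$ below $\hat\lambda_s$ on $(\sigma_m,\sigma_{m+1})$ would be jumps of $\hat Z$, so there are none strictly inside; this forces $Z^i$ to have at most one candidate jump per inter-jump interval of $\hat Z$, so the procedure terminates trivially. The resulting $A^i$ is c\`agl\`ad by construction (left-continuous with right limits, resetting at jumps). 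To anchor the induction as $m \to -\infty$ one uses that $\hat Z$, and with it $Z^i$, has only finitely many jumps in any $(-T, 0]$, and sends $T \to \infty$; $A^i_t$ is then well-defined for every $t$ because almost surely $\hat Z$ (being non-exploding and, by \eqref{eq:doeblin}-driven regeneration encoded in $\hat\lambda \ge K$, having jumps arbitrarily far in the past) has a last jump before any $t$.

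Finally I would verify the three required properties: that $Z^i$ satisfies the integral equation \eqref{eqtime} for all Borel $B$ — immediate from the construction, which places a jump at precisely the atoms $(z,s)$ of $\pi^i$ with $z \le \psi^i(X^i_s, A^i_s)$; that $A^i$ is the age process of $Z^i$ — built in; and that $(Z^i, A^i)$ is compatible. Compatibility follows because the construction is translation-equivariant: the entire recipe depends only on the atoms of $\pi^i$ and on $(X^i, \hat A, \hat\lambda)$, all of which are compatible (the latter by Proposition \ref{prop:zhat}, the former by hypothesis), and the measurable map producing $(\theta^t Z^i)|_{\R_-}$ from $(\theta^t \pi^1, \ldots, \theta^t \pi^N)|_{\R_-}$ is the same for all $t$; one should note here that $X^i$ being compatible means it too is such a functional, and composition of the construction map with the functionals for $X^i$ and $\hat A$ yields the single map $H$ required in the definition of compatibility. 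The main obstacle I anticipate is the bookkeeping for the ``from $t=-\infty$'' anchoring of the pathwise induction — making rigorous that $A^i_t$ is well-defined for all $t \in \R$ requires knowing $Z^i$ has infinitely many jumps going backward in time (so that $t - A^i_t$ is finite), which follows from the lower bound $\hat\lambda_s \ge K > 0$ forcing $\hat Z$, and hence $Z^i$ via the common jumps argument already used in Proposition \ref{prop:zhat}, to regenerate; this is where I would be most careful, possibly invoking a Borel--Cantelli argument on the events $\{\hat Z(-n-1, -n] \ge 1\}$.
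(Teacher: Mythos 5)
Your domination step is correct and is the same one the paper uses: since $\psi^i(x,a)\le L(1+|x|)$ does not involve $a$, the bound $|X^i_s|\le Y_s$ gives $\psi^i(X^i_s,A^i_s)\le L(1+Y_s)\le \hat\lambda_s$ for \emph{any} candidate age process, so every jump of $Z^i$ is a jump of $\hat Z$ and local finiteness is automatic. But your overall route — a direct pathwise forward induction along the jump times of $\hat Z$ — has a genuine gap exactly where you suspected it, and the fix you sketch does not close it. The construction is circular: to decide whether $Z^i$ jumps at a candidate time $\sigma$ you need $A^i_\sigma$, i.e.\ the last jump of $Z^i$ before $\sigma$, i.e.\ the outcome of all earlier decisions, with no base case at $-\infty$. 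Knowing that $Z^i$ has infinitely many jumps going backward would not break this circularity (you still never reach a time at which the state is known), and in any case your argument for that fact fails: the lower bound $\hat\lambda\ge K$ forces the atoms of $\pi^i_K$ to be jumps of $\hat Z$, not of $Z^i$. The intensity of $Z^i$ has no uniform lower bound — by \eqref{gassump} it is at most $K$ right after a jump and may well vanish there (e.g.\ $\psi^i(x,a)=f^i(x)\ql\{a>\delta\}$) — and the only available lower bound, \eqref{eq:doeblin}, requires $A^i_s\ge a^*$, which presupposes the very object you are constructing. A Borel--Cantelli argument on $\{\hat Z(-n-1,-n]\ge 1\}$ therefore gives you nothing about $Z^i$.

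What is missing is a supply of \emph{renewal times for $Z^i$ that do not depend on its own past}. This is what the event $E_t$ of Lemma \ref{lem:ok} provides: on $E_t$ one has $|X^i_{\tau^i}|\le x^*$ (from \eqref{eq:21} and \eqref{eq:upper bound}) and $A^i_{\tau^i}\ge \hat A_{\tau^i}\ge a^*$ (the latter because $Z^i\preceq\hat Z$ forces $A^i\ge\hat A$, whatever the history of $Z^i$), so \eqref{eq:doeblin} makes the atom $\tau^i$ of $\pi^i([0,c],ds)$ a jump of $Z^i$ unconditionally; since $P(E_t)>0$ and the driving PRMs are ergodic, such times occur arbitrarily far in the past. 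The paper packages this differently — it runs a Picard iteration on the age process (initialized at $Z^{0,i}\equiv\pi^i_K$, which is well-defined on all of $\R$) and uses $E_0$ together with a $0/1$ law to show the iterates coincide from $\tau^i$ onward — but the content is the same: without \eqref{eq:doeblin} and the event $E_t$, neither your construction nor any other can be anchored at $-\infty$. If you graft the $E_t$ renewal argument onto your pathwise scheme (start the forward induction at the last renewal time before any fixed $t$, and check consistency across renewal times), your approach can be completed and is arguably more transparent than the iteration; as written, however, it is incomplete at its crucial step.
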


\begin{proof}  The proof relies on Picard iteration. For that sake, define
  recursively for all $ n \geq  1, $ for all $1 \le i \le N, $

\begin{align}
  Z^{n, i}\lp t_{1},t_{2}\rf=\int_{t_{1}}^{t_{2}}\int_{0}^{\infty} \ql\lb z\leq \psi^i \lp X^i_{s},A_{s}^{n- 1, i }\rp\rb \pi^i \lp ds,dz\rp ,\quad t_{1}<t_{2}\in \R ,
\end{align}
where $A^{n- 1, i}$ is the age process corresponding to $Z^{n- 1 ,
  i}.$ We initialize the iteration with $Z^{0, i}\equiv  \pi^i_K .$

We start by proving inductively over $n$ that the Picard iteration is well-posed, and $Z^{n}$ is non-exploding and compatible.

The induction start is trivial.  We assume that the hypothesis holds
for $n-1. $  Clearly $Z^{n}$ is compatible.   Moreover, $Z^{n, i} $
has intensity $\psi^i ( X^i_t , A^{n- 1, i }_t )  \le L ( 1 +  Y_t), $
and $\mathbb{E} Y_t<\infty$ implying that $ Z^{ni} $
does not explode.

We will now prove the convergence of the above scheme. To do so, define measures $ \underline Z^i $ and $ \overline Z^i $ by
$$ \underline Z^i  [t]  = \liminf_{n\con \infty} Z^{ni}[t]
, \;   \overline Z^i [t]  = \limsup_{n\con \infty} Z^{ni } [t]
$$
for any $ t \in \R ,$ and
$$
\tilde {Z} = \sum_{i=1}^N (    \overline Z^i  -\underline Z^i )  .
$$
That is, $\tilde {Z}$ counts the sum of
the differences of the superior and inferior limit processes. We claim
that $\tilde {Z}$ is almost surely the trivial measure. It will follow
that $Z^{n,i}$, and thus also $A^{n,i}$ converge.

To prove this claim, consider the event
$$
G_{t}=   \left\{  \tilde {Z}\lp t,\infty \rp =0 \right\} .
$$
Notice that $\{ \tilde {Z}\lp t,\infty\rp=0\} = \{ \theta^{t} (\pi^i)_{i=1}^N\in V\}$ for some $V\in {\mathcal M}$ and thus $\{ \tilde {Z}\lp t,\infty\rp = 0 \; \text{infinitely often} \}$ is an invariant set, and thus also a $0 /1$ event.
It follows by standard arguments that $P\lp \tilde {Z}\lp \R \rp =
0\rp = 1$ if $P\lp G_{0}\rp >0$.

We now prove that  $P ( G_0 ) > 0 $ by showing that $ E_0 \subset G_0 , $ where $E_0$ was defined in \eqref{eq:et} above (that is, we choose $t = 0$).

The assumption $ |X_t^i | \le Y_t  $ implies that  $\lambda_t^{n, i } \le \hat \lambda_t $ for all $i, n  $ and $t.$
 Lemma \ref{lem:ok} implies that on  $E_{t } $ we have $ \hat A_{\tau^i } \geq a^* , $ and therefore also $ A^{n, i }_{\tau^i } \geq a^*.$ Moreover, \eqref{eq:21} implies that $ |X^{i }_{\tau^i}| \le x^*.$  Therefore, \eqref{eq:doeblin} implies
\begin{equation}\label{eq:lblambda}
 \lambda_{\tau^i  }^{n, i}  \geq c
\end{equation}
for all $ n, i .$
As a consequence, at time $ \tau^i ,$ all $Z^{ni } $ have a common jump. From \eqref{eq:1} it follows that $ Z^{n i } ( \tau^i , 0) = 0 ,$ and therefore, $ A^{n,i }_{0  } =  - \tau^i . $ In particular, they are all equal. We may now conclude
that on $E_0, $ $ Z^{n,i }_{| \R_+} $ is a constant sequence over $n, $ for all $i.$ In particular, we have $ \tilde Z ( 0, \infty ) = 0.$ To conclude the proof, we have proven that $ E_0 \subset G_0,  $ and thus
$$ P( G_0 \cap  E_0 ) = P( E_0 ) > 0 ,$$
implying the result.
\end{proof}

We are now ready to prove \Cref{Stab}.  \\

{\it Proof of Theorem \ref{Stab}.}

  First we construct a stationary solution to
\eqref{systembis}. For this sake we consider the Picard iteration
\begin{align*}
			X^{n,i}_{t}&=\sum_{j=1}^{N}\int_{-\infty}^{t-}h_{ij}\lp t-s\rp Z^{n-1,j} {(ds)} , \\
			Z^{n,i}\lp t_{1},t_{2}\rf&=\int_{t_{1}}^{t_{2}}\int_{0}^{\infty} \ql\lb z\leq \psi^{i}\lp X^{n,i}_{s},A_{s}^{n,i}\rp\rb \pi^{i}\lp ds,dz\rp ,\quad t_{1}<t_{2}\in \R ,
\end{align*}
where $A^{n,j}$ is the age process of $Z^{n,j}$. We
initialize the iteration with $Z^{0,i} \equiv   \pi^i_K  ,X^{0}\equiv 0.$

We start by proving inductively over $n$ that the Picard iteration is well-posed, $Z^{n,i}$ is non-exploding and compatible, and almost surely
\begin{equation}\label{eq:bound}
L( 1 + |X_t^{n,i }|) \leq \hat{\lambda}_{t}\quad \forall t\in \R,
\end{equation}
for all $n,i$, where $\hat \lambda$ is defined in \eqref{eq:hatlambda} above.
The induction start is trivial. Suppose now that the assertion holds
for $n-1.$  We apply Lemma \ref{NoJumpLemma} with $X^i = X^{n, i },$ and show that the
conditions of this Lemma are met, then well-posedness, ergodicity and
stationarity  of  $Z^{n,i},A^{n,i}$ follow.

Next, we prove the upper bound on $ X^{n, i} .$ By construction,
$$ X_t^{n , i } = \sum_{j=1}^N \int_{-\infty}^{t-} h_{ij} (t-s) Z^{n-1, j } (ds) = \sum_{j=1}^N \int_{-\infty }^{ t - A^{n-1, j }_t } h_{ij} ( t-s) Z^{n-1, j } (ds ) .$$
We apply  Lemma \ref{Xbound} to each of the $N$ terms within the above sum and obtain
\begin{equation}\label{eq:xni}
 \int_{-\infty }^{ t - A^{n-1, j }_t } h_{ij} ( t-s) Z^{n-1, j } (ds ) \le \sum_{k \geq 0} \bar h_{ij} ( A^{n-1, j }_t + k \delta ) + \int_{-\infty}^{t- A_t^{n-1, j } }\bar h_{ij} (t-s) \pi^j_K ( ds ) .
\end{equation}
Since $ \hat \lambda_t \geq \psi^i ( X_t^{n-1, i }, A_t^{n-1, i } ) $ for all $i, $ it follows that $ \hat A_t \le A_t^{n-1, i } $ for all $ i, $
implying that
$$ | X_t^{n, i } | \le  \int_{-\infty}^{t - \hat A_t} \bar h ( t- s ) \pi_{NK }
(ds) +  \sum_{k \geq 0} \bar h ( \hat A_t + k \delta), $$ which is
\eqref{eq:upper bound}.  Finally, since $A^{n-1},Z^{n-1}$ are compatible, it is straight-forward to show that $Z^{n}$ is compatible as well.

Define now
\begin{align*}
\underline{\lambda}^{i}_{t}= \liminf_{n\con \infty} \psi^{i}\lp X^{n,i}_{t},A_{t}^{n,i}\rp,\quad \overline{\lambda}^{i}_{t}= \limsup_{n\con \infty} \psi^{i}\lp X^{n,i}_{t},A_{t}^{n,i}\rp ,  1 \le i \le N .
\end{align*}
Note that by \eqref{psias2} and \eqref{eq:bound}, $ \underline{\lambda}^{i}_{t} \le  \overline{\lambda}^{i}_{t} \le \limsup_{n \to \infty}
L( 1 + |X_t^{n,i }|)
  \leq \hat{\lambda}_{t} .$ So almost
surely, $\underline{\lambda}^{i},\overline{\lambda}^{i}$ have finite
sample paths. Note also that they are limits of predictable processes
(see  \Cref{measurelemma} in Appendix), and thus they are predictable as well. Define also
\begin{align*}
\tilde {Z}^{i} [t] &=  \limsup_{n\con \infty}Z^{n,i}[t] -\liminf_{n \con \infty } Z^{n,i}[t]   = \pi^i \left (\{ t \}\times (  \underline{\lambda}^{i}_{t},\overline{\lambda}_{t}^{i} ]   \right)   ,
\end{align*}
for  $i\leq N,t \in \R$. That is, $\tilde {Z}^{i}$ counts the difference of the superior and inferior limit process. We claim that \begin{equation}\label{eq:tildeZ}
\tilde{Z}=\sum_{j=1}^{N} \tilde {Z}^{j}
\end{equation}
is almost surely the zero-measure. It will follow that $Z^{n,j}$,
and thus also $X^{n,i},A^{n,i}$ converge. Moreover, it is straight
forward to check that the limit variables solve \eqref{systembis}.

To prove this claim, note that
we may also find measurable $H^{i}: M_{\R\times\R_{+}}\rightarrow \R^{2} $ such that almost surely
\begin{align*}
H^{i}\lp \theta^{ t}(\pi^i)_{i=1}^N \rp = \lp \underline{\lambda}^{i}_{t},\overline{\lambda}^{i}_{t}\rp,\quad  \forall t\in \R.
\end{align*}
Consider the events $E_t$ defined in \eqref{eq:et} above as well as
$$
G_{t}= \lp \tilde {Z}\lp t,\infty \rp =0 \rp.
$$
Using the functionals obtained previously, it follows that $\{ \tilde {Z}\lp t,\infty\rp=0\} = \{ \theta^{t}(\pi^i)_{i=1}^N \in V\}$ for some $V\in {\mathcal M}$ and thus $\{ \tilde {Z}\lp t,\infty\rp = 0 \; \text{infinitely often} \}$ is an invariant set, and thus also a $0 /1$ event. As before this implies that $P\lp \tilde {Z}\lp \R \rp = 0\rp = 1$ if $P\lp G_{0} \cap E_0 \rp >0$.

To prove that $P ( G_0 \cap E_0  ) > 0, $ note that we have $\lambda^{n, i }_t = \psi ( X_t^{n, i }, A_t^{n , i } ) \le \hat \lambda_t $ and $ |X_t^{n, i } |\le Y_t .$ Therefore, the same arguments as those exposed in the proof
of Lemma \ref{NoJumpLemma}, show that on  $E_{0}$,  we have $A^{n ,i }_{0} = A^{m , i }_0  ;  $ for all $ n , m $ and $ i, $ that is, the age variables are all equal at time $0.$

Moreover,  on $ G_0,$ either no jumps happen any more, or they happen conjointly, and so the Lipschitz criterion \eqref{psias1} ensures the bound
\begin{align*}
\lv \overline{\lambda}^{i}_{t}-\underline{\lambda}^{i}_{t}\rv\leq L \lim_{n\con \infty}\sup_{m,k\geq n}\lv X^{m}_{t}-X^{k}_{t}\rv \leq L\int_{-\infty }^{0-} h\lp t-s\rp  \tilde {Z}(ds):=\bar{X}_{t},
\end{align*}
for all $ t \geq 0,$ which holds on $G_0 \cap E_0.$
Therefore we may write
\begin{align*}
P\lp G_{0} \cap E_0 \rp \geq  P\lp  E _{0} \cap  \left\{  \sum_{j=1}^{N}\int_{0}^{\infty}\int_{0}^{\infty}\ql\lb z\in \lp \underline{\lambda}^{j}_{s},\underline{\lambda}^{j}_{s}+\bar{X}_{s}\rf \rb  \pi^{j}\lp ds,dz\rp=0\right\} \rp .
\end{align*}
Note that \Cref{compensator} in Appendix reveals that the compensator of the integral-sum above is
\begin{align*}
t\mapsto N\int_{0}^{t} \bar{X}_{s} ds=NL \int_{0}^{t} \int_{-\infty}^{0-} h\lp s-u\rp   \tilde {Z}(du) ds.
\end{align*}
The same lemma  gives an expression for $P ( \bar Z ( 0, \infty) = 0 | \mathcal F_0 ) , $  and so implies the lower bound
\begin{align*}
P\lp G_{0}\cap E_0 \rp&\geq {\mathbb E} \ql \lb E_{0}  \rb \exp\lp -NL \int_{0}^{\infty}\int_{-\infty}^{0-}h \lp s-u\rp \tilde {Z}(du)ds \rp .
\end{align*}

To prove that the right hand side is positive, it suffices to show that the double integral inside the exponential is almost surely finite.
Notice that by construction, $ \tilde Z_t \le \hat Z_t,$ and recall from \Cref{prop:zhat} that  $ \hat\lambda$ is stationary with  $\mathbb{E}\hat\lambda_{0 }<\infty$. After taking expectation,
\begin{eqnarray}
{\mathbb E}\int_{0}^{\infty}\int_{-\infty}^{0-} h\lp s-u\rp \tilde {Z}(du)ds & \le&\label{lastintegrals}
{\mathbb E}\int_{0}^{\infty}\int_{-\infty}^{0-} h\lp s-u\rp \hat  {Z}(du)ds  \nonumber\\
&=& {\mathbb E}\int_{0}^{\infty}\int_{-\infty}^{0-} h\lp s-u\rp     \hat \lambda_u  du ds\nonumber \\
&=&  \E ( \hat \lambda_0 ) \int_{0}^{\infty}\int_{-\infty}^{0-} h\lp s-u\rp   du ds\nonumber\\
& =&\E ( \hat \lambda_0 )  \int_0^\infty t  h (t ) dt < \infty \nonumber.
\end{eqnarray}
This proves the desired result.

We now prove the coupling part. This will be done in two steps.
First suppose that $ |R_t | $ is  bounded by a constant
$C_R.$ We suppose w.l.o.g.\ that $\hat \lambda $ defined in
\eqref{eq:hatlambda} is such that also $ C \geq C_R.$

Let $\lp \lp \check Z^{i}\rp_{i\leq N},\lp \check X^{ i}\rp,\lp \check A^{ i}\rp_{i\leq N}\rp$ be the $N$-dimensional age dependent Hawkes process with initial conditions $\lp \check A^{i}_{0}\rp,\lp R^{i}\rp$ and denote
$\check \lambda^i_t := \psi ( \check X^i_t , \check A_t^i )  . $ Then we clearly have that
$$ \check \lambda^i _0 \le \hat \lambda_0  $$
for all $ i ,$ and it can be shown inductively over the successive jumps of $ \hat Z, $ using Lemma \ref{Xbound}, that this inequality is preserved over time, that is,
\begin{equation}\label{eq:good}
 \check \lambda^i_t \le \hat \lambda_t
\end{equation}
for all $ t \geq 0.$ Now introduce
$$
\check E^0_t := \left\{\sum_{i=1}^N \int_{t - 2N p }^t \ql \left \{ z
    \le \frac{3 N  c}{ x^*  } | R _s| \right \} \pi^i \lp ds,dz\rp = 0  \right\} $$
and put
\begin{equation}\label{eq:etprime}
E'_t := E_t \cap \check E^0_t .
\end{equation}
Let $\lp   \tau^i \rp_{i \le N} $ be the  jump times from \Cref{lem:ok}. We necessarily have that on $E'_t, $
$$ | R_{ \tau^i }| \le \frac{x^*}{  3 N}  ,$$
for all $ 1 \le i \le N.$ As a consequence,  \eqref{eq:21} implies that
$$ | \check X_{\tau^i }| \le Y_{\tau^i } + |R_{\tau^i }| \le x^* .$$
Due to \eqref{eq:good},  we have $ \check A^i_{\tau^i } \geq \hat
A_{\tau^i } \geq a^* . $ Therefore, using \eqref{eq:doeblin}, we conclude that
$$ \check \lambda^i_{\tau^i } \geq c ,$$
implying that $ \tau^i $ is also a jump of $ \check Z^i.$ Thus, on $ E_t', $ at time $ t, $ all $ ( \check A^i , A^i ) , 1 \le i \le N, $  are coupled. Therefore, we have a Lipschitz bound under the event $E'_{t}; $ so with  $Z=\sum_{j=1}^{N}Z^{j},$  $\check {Z}=\sum_{j=1}^{N}\check{Z}^{j},$ we may write
\begin{multline*}
\sum_{i=1}^N \lv  \psi^{i}\lp X^{i}_{t},A^{i}_{t}\rp-\psi^{i}\lp \check X^{i}_{t}, \check A^{i}_{t}\rp\rv
\leq L \sum_{i=1}^N \lv X^{i}_{t}-\check  X ^{i}_{t} \rv
\\
\leq L\lp  \int_{-\infty}^{0-} h\lp t-s\rp  Z(ds) + \lv R_{t}\rv +  \int_{0}^{t-}h\lp t-s\rp   \tilde Z(ds)\rp ,
\end{multline*}
which holds on $E_t' . $

As before $ \tilde Z := | Z - \check Z | $ and
$ G_t := \{ \tilde Z (t, \infty ) = 0 \}  .$
Equivalent considerations as in the first part of the proof yield
\begin{align}
P&\lp G_{t} \cap E'_t \; \vert \; {\mathcal F}_{t}\rp  \nonumber \\
&\geq\ql \lb E'_{t}\rb  \exp\lp -L\int_{t}^{\infty} \left[  \int_{-\infty}^{0} h\lp s-u\rp Z(du) + \lv R_{s}\rv + \int_{0}^{t-} h\lp s-u \rp \tilde{Z}(du)\right]  ds \rp  \nonumber \\
&\geq  \ql \lb E'_{t}\rb  \exp\lp -L\int_{t}^{\infty} \left[  \int_{-\infty}^{t- } h\lp s-u\rp Z(du) + \lv R_{s}\rv + \int_{0}^{t-} h\lp s-u \rp \check {Z}(du)\right]  ds \rp .
\end{align}
Since $ \check \lambda_t \le \hat \lambda_t $ for all $ t \geq 0, $
$$  \int_{0 }^{t- } h\lp s-u\rp  \check Z(du) \le
\int_{-\infty}^{t- } h\lp s-u\rp \hat Z(du)   . $$
As a consequence,
\begin{multline*}
\int_{t}^{\infty} \left[  \int_{-\infty}^{t- } h\lp s-u\rp Z(du) + \lv R_{s}\rv + \int_{0}^{t-} h\lp s-u \rp  \check {Z}(du)\right]  ds \\
\le  2 \int_{t}^{\infty} \left[  \int_{-\infty}^{t- } h\lp s-u\rp  \hat Z(du)  \right] ds + \int_t^\infty  |R_s|    ds
=  C_t +D_t ,
\end{multline*}
where
$$ C_t :=  2 \int_{t}^{\infty} \left[  \int_{-\infty}^{t- } h\lp s-u\rp  \hat Z(du)  \right] ds $$
is stationary and ergodic, and where
$$ D_t := \int_t^\infty  |R_s|   ds.$$
Clearly,
$ D_t \to 0 $
as $t \to \infty $ almost surely.

Now apply \Cref{ergodiclemma} in Appendix with $ U_t := \ql {E_t} e^{-L C_t}, $ $ r_t :=  \ql {E_t} e^{-LC_t} - \ql {E_t'} e^{- L C_t - L D_t }.$ Clearly, $U_t $ is ergodic and satisfies $ P ( U_t > 0 ) > 0.$ To see that $r_t \to 0 $ almost surely as $t \to \infty , $ it suffices to prove that $ \ql E'_t - \ql E_t \to 0 $ almost surely, as $t \to \infty , $ which is equivalent to proving that  $  \ql \check E^0_t \to 1 $ almost surely.  But this follows from
$$ \sum_{i=1}^N \E \int_0^\infty  \int_{\R_+} \ql \left \{ z \le \frac{3N
}{x^* } |R _s| \right \} \pi^i \lp ds,dz\rp < \infty ,$$
which follows from \eqref{eq:rint}.

This finishes the first part of the proof of the coupling result.
Finally, suppose that the initial process only satisfies \eqref{eq:rint}. Take then a sequence $ (\check Z^m , \check X^m , \check A^m ) $ of $ N-$dimensional age dependent Hawkes processes with starting condition $ ( \check A^i_0)_{ i \le n } $ and initial processes $ (-m\vee R^i \wedge m)_{i \le N } .$ Write $ \check \lambda_t^{ m , i} = \psi^i ( \check X^{m, i }_t, \check A_t^{m, i } ) $ for the associated intensity and  $ \check \lambda_t^{m \wedge m+1, i } := \check \lambda^{m, i}_t \wedge \check \lambda^{m+1, i }_t .$ Denote $R_t^{m,i} := -m \vee R_t^i \wedge m.$  As  in the proof of Proposition \ref{prop:13}, we have that
$$
P ( \check Z^m \neq \check Z^{m+1} )
\le L \sum_{i=1}^N   \E \int_0^\infty | R^{m, i}_t  - R^{m+1 , i}_t | dt  .
$$
Since $ \E \int_0^\infty |R^i_t | dt < \infty , $ we conclude that
$$ \sum_m P ( \check Z^m \neq \check Z^{m+1} ) < \infty , $$
implying that almost surely, $ \check Z^m = \check Z $ on $ \R_+ $ for sufficiently large $m. $ Since $ \check Z^m $ and $ Z$ couple eventually almost surely, this proves the coupling part. It remains to prove uniqueness of the stationary solution. Let  $\lp Z',X',A' \rp $ be another age dependent Hawkes process on $t\in \R$ compatible to $\pi^{1},\dots , \pi^{N}$. \Cref{Xbound} gives the inequality
\begin{equation*}
\lv X^{'i }_{t }\rv \leq \sum_{j=1 }^{N }\lp \sum_{k \geq 0} \bar h_{ij} ( A^{'j }_t + k \delta ) + \int_{-\infty}^{t- A^{'j }_t }\bar h_{ij} (t-s) \pi^j_K ( ds ) \rp.
\end{equation*}
Let $\tau $ be a jump of  $\hat{Z}$. Using the above inequality, it is shown inductively over future jumps of $\hat{Z}$ that  $\hat{\lambda}_{t }\geq \lv \lambda^{'i }\rv$ for  all $t\in  ]  \tau,\infty [ $. Thus it follows that  $\hat{\lambda}_{t }\geq \lv \lambda^{'i }\rv$ for all  $t\in \R$.
Note that the  $ \lp Z',X',A' \rp $ system may be written in terms of \Cref{defiHawkes} with initial signals  $R^{'i }_{t }:= \sum_{j=1 }^{N }\int_{\infty }^{0 }h_{ij }\lp t-s \rp dZ^{'j }_{s }$. The same arguments as in \eqref{lastintegrals} give
\begin{equation*}
\mathbb{E}\int_{0 }^{\infty }\lv R^{'i}_{s }\rv ds \leq \mathbb{E}\int_{0 }^{\infty }\int_{-\infty }^{0- } h\lp s-u \rp \hat{Z}\lp du \rp ds<\infty.
\end{equation*}

Therefore it follows from the 2nd point of this theorem that
\begin{equation}
P\lp \exists \, t_{0}\in \R : \;\;  Z'_{| [t_0, \infty) } = Z_{| [t_0, \infty ) }\rp= P\lp \bigcup_{n=-\infty}^{\infty}\lp Z'_{| [n, \infty) } = Z_{| [n, \infty ) }\rp\rp  = 1.\label{uniqueeq}
\end{equation}

Since $Z $ and  $Z' $ are both compatible, it follows that $\lp Z,Z'\rp$ is compatible and therefore also stationary. Thus, the events $\lb Z'_{| [n, \infty) } = Z_{| [n, \infty ) }\rb$ have the same probability for all $n\in \mathbb{Z}$, and from \eqref{uniqueeq} it follows that the probability is equal to $1$. This proves that $Z=Z'$ almost surely.
$\qed$

\subsection{Age dependent Hawkes processes with Erlang weight functions}\label{sec:Erlang}
 Here we show how Theorem \ref{Stab} can be applied for weight functions
given by Erlang kernels as in Example \ref{ex:erlang}, and consider a
one-dimensional ($N=1$) age dependent Hawkes process $(Z, X, A), $
solution of
	\begin{eqnarray}
	Z_t  &= &\int_{0}^{t}\int_{0}^{\infty}\ql\lb z\leq \psi \lp
                  X_{s},A_{s}\rp\rb  \pi \lp ds,dz\rp, \nonumber \\
X_{t}&=&\int_{0}^{t-} h_{}\lp t-s\rp Z(ds) +R_t, \label{systemerlang}\\
	A_{t}&=&A_0 +  t-\int_{0}^{t-} A_{s}  Z(ds) , \nonumber
	\end{eqnarray}
where
\begin{equation}\label{eq:erlangkernel2}
h(t) = b \frac{t^n }{n!} e^{- \nu t } ,
\end{equation}
for some $b \in \R  , \nu > 0 $ and $ n \geq 0 ,$ and where the initial signal is given by
$$ R_t = \int_{- \infty}^0 h(t-s) z (ds)  ,$$
for some fixed discrete point measure $z$ defined on $  ( - \infty , 0 ) $ such that $ \int_{- \infty}^0 h(t- s) z (ds) $ is well defined.

The process $ ( X_{t}, A_{t} ) $ is not Markov, but it is well-known (see e.g.\ \cite{SusEva}) that it  can be completed to a Markovian system $ (
X_t^{(0)} := X_t, X_t^{(1) }, \ldots , X_t^{(n)}, A_t) ,$ by
introducing the auxiliary processes
$$ X_t^{(k) } := \int_{0}^{t-} b \frac{ (t-s)^{n-k}}{(n-k)!} e^{ - \nu ( t-s) }  Z(ds) +  \int_{- \infty}^0b \frac{ (t-s)^{n-k}}{(n-k)!} e^{ - \nu ( t-s) }  z (ds) , \mbox{ for all }  0 \le k \le n.$$

By \cite{SusEva},
these satisfy the system of coupled differential equations, driven by
the PRM $\pi, $ given by
\begin{eqnarray}
d X_{t+}^{(k)} &=& - \nu X_{t+}^{(k)}dt + X_{t+}^{(k+1)} dt , \quad 0 \le k <
n , \\
d X_{t+}^{(n)} &=& - \nu X_{t+}^{(n)}dt + b   \int_{0}^{\infty}\ql\{ z\leq \psi (
                  X^{(0)}_{t},A_{t} ) \}  \pi \lp dt,dz\rp,
\end{eqnarray}
and
$$ A_{t}-A_{0} = t-\int_{0}^{t-} \int_{0}^{\infty} A_s \ql\lb z\leq \psi \lp
                  X_{s},A_{s}\rp\rb  \pi \lp ds,dz\rp =   t-\int_{0}^{t-} A_{s}  Z(ds),$$
for $ t \geq 0. $ Evidently, $h$ satisfies \eqref{eq:hbar}. We suppose
that $\psi  ( x, a) $ satisfies \eqref{psias1} and we strengthen
\eqref{eq:doeblin} to the following assumption.

\begin{ass}\label{eq:psi}
$\psi (x, a ) $ is continuous in $x$ and $a; $ and $\psi ( x, a ) \geq c > 0 $ for all $ x, a$ with  $ a \geq a^* .$
\end{ass}

Then the following result strengthens Theorem \ref{Stab} in this Markovian setting.

\begin{s}
Grant Assumptions \ref{as1} and  \ref{eq:psi}. Then the process $ (
X_t^{(0)} , X_t^{(1) }, \ldots , X_t^{(n)}, A_t)  $ is positively recurrent in the sense of Harris having unique invariant probability measure $\mu .$
\end{s}

\begin{proof}
{\it Step 1.}
By Lemma \ref{Xbound} and Corollary \ref{cor:hbounded}, $ t \mapsto
{\mathbb E}  ( \lambda_t)   = \E ( \psi ( X_t, A_t) )$ and $ t \mapsto
\E ( |X_t|) = \E(|X_t^{(0)}|)  $ are bounded on $ \R.$ By the same
argument, also $ t \mapsto  \E(|X_t^{(k)}|)  $ is bounded for $ 1
\le k \le n.$ Therefore, $  ( X_t^{(0)} , X_t^{(1) }, \ldots ,
X_t^{(n)}) $ is a $1-$ultimately bounded Feller process (the Feller
property follows from the continuity of $ \psi $), see e.g.\
\cite{miha}.

We write $ x = ( x^0, \ldots , x^n ) \in
\R^{n+1} $ for the elements of $ \R^{n+1} $ and denote by $ P_t ( (x, a ) , \cdot ) $ the transition
semigroup of $ (X^{(0)}_t, \ldots , X_t^{(n)} ,  A_t ) .$ Let $ B_k
= \{ (x, a ) : | x|  + |a| \le k \} .$ Then for any $ x_0 \in \R^{n+1}, a_0 \geq 0, $
$$ P_t \Big( (x_0, a_0) , B_k^c \Big) \le P_{(x_0, a_0)} \left( |X_t| \geq \frac{k}{2} \right) + P_{(x_0, a_0)} \left(  A_t \geq \frac{k}{2} \right) ,$$
where
$$ P_{(x_0, a_0)} \left( |X_t| \geq \frac{k}{2} \right) \le \frac{ 2 \sup_t \E_{(x_0, a_0 ) }  ( |X_t | )}{k} ,$$
and
$$ P_{(x_0, a_0)} \left(  A_t \geq \frac{k}{2} \right) \le e^{ - c ( \frac{k}{2}-a^*)_+} ,$$
implying inequality (6) of \cite{miha}. Thus, by  Theorem 1 of
\cite{miha},  $ (X^{(0)}_t, \ldots , X_t^{(n)} ,  A_t)_{t \geq 0}  $
possesses invariant probability measures $\mu$ (not necessarily unique ones).\footnote{As a matter of fact, this provides
a different approach to prove the existence of a stationary version of the age dependent Hawkes process.}

{\it Step 2.} We shall now use the coupling property proved in Theorem \ref{Stab} to prove uniqueness of the invariant measure $\mu .$ In what follows we write $ (Z, X, A ) $ for the stationary version of \eqref{systemerlang}, which exists according to Theorem \ref{Stab}. Moreover, we write $ (\tilde Z, \tilde X, \tilde A ) $ for a version of \eqref{systemerlang} starting at time $t= 0 $ from an arbitrary initial age $ a_0$ and an initial configuration $x_0= (x_0^{(0)}, \ldots , x_0^{(n)} ) $ with $ x_0^{(k)} = b\int_{- \infty }^0 \frac{ (-s)^{n-k}}{(n-k)!} e^{-\nu s } z (ds ) .$  Write
$$ \tau_c := \inf \{ t > 0 : Z \mbox{ and } \tilde Z \mbox{ couple at time } t \}\vee 1 .$$
Note that
$$| h(s+u )| \le C |h (s)| | h(u)| $$
for all $  s,u \geq 1, $ where $C$ is an appropriate constant. It follows that almost surely, for all  $ t \geq \tau_{c } + 1$
$$ |X_t - \tilde X_t| \le   \overline{h} ( t- \tau_c)  (Z ( [0, \tau_c]) +
\tilde Z ( [0, \tau_c] ))\tilde Z ( [0, \tau_c] )) + C |h(t- \tau_{c})| ( |X_{\tau_{c }} | + | \tilde X_{\tau_{c }} |) , $$
showing that
\begin{equation}\label{eq:close}
\lim_{t \to \infty }  |X_t - \tilde X_t|  = 0
\end{equation}
almost surely, since $| h ( t - \tau_{c } )| \to 0 $ as $ t \to \infty.$  In the same way one proves that also
\begin{equation}\label{eq:closebis}
\lim_{t \to \infty }  |X^{(k)}_t - \tilde X^{(k)}_t|  = 0
\end{equation}
almost surely, for all $ 1 \le k \le n.$  Moreover, we obviously have that $ \tilde A = A $ on $ [ T_1 \circ \theta_{\tau_c}, \infty [  , $ where $  T_1 \circ \theta_{\tau_c} = \inf \{ t > \tau_c : Z ( [t]) = \tilde Z ( [t] ) = 1 \}.$ Since $ \psi( x, a ) \geq c > 0 $ for all $ a \geq a^*, $ $ T_1 \circ \theta_{\tau_c} < \infty $ almost surely. This implies the uniqueness of the invariant measure.

{\it Step 3.} Finally, to prove the Harris recurrence of the process $ ( X^{(0)}_t,\ldots , X_t^{(n)} ,  A_t) ,$ we rely on the following local Doeblin lower bound. It states that for all $ (x^{**}, a^{**} )   \in \R^{n+1}\times \R_+ , $ there exist $R > 0 , $ an open set $ I \subset \R^{n+1}\times \R_+ $ and a constant $\beta \in ] 0, 1 [, $ such that for any $T > (n+2) a^* , $
\begin{equation}\label{doblinminorization}
P_{T} ( (x_0, a_0) , \cdot ) \geq \beta \ql_C  ( x_0, a_0) U ( \cdot) ,
\end{equation}
where $ C = B_R  (x^{**}, a^{**} ) $ is the (open) ball of radius $R$ centered at $(x^{**}, a^{**} )  ,$ and where $ U $ is the uniform measure on $  I.$ This lower bound follows easily adapting the proof of Theorem 3 in \cite{aline} to our framework.

We may apply the above result with $ (x^{**}, a^{**} ) \in supp (\mu ) $ where $\mu$ is the (unique) invariant measure of the process. Then for the stationary version of the process, $(X_t^{(0)}, \ldots, X_t^{(n)} ,  A_t)  \in B_{R/2}  (x^{**}, a^{**} ) $ infinitely often. Then \eqref{eq:close} and \eqref{eq:closebis} imply that also $(\tilde X^{(0)}_t, \ldots , \tilde X_t^{(n)} ,  \tilde A_t)  \in B_{R} ( x^{**}, a^{**} )  = C $ infinitely often, almost surely. The classical regeneration technique, see e.g.\ \cite{nummelin},  allows to conclude that indeed the process is positively recurrent in the sense of Harris.
\end{proof}

\section{Mean-field limit and propagation of chaos}\label{sec:mf}
In this section we focus on a multi-class mean-field setup of the age dependent Hawkes process on $\R_+,$ starting from initial signals and ages. We propose a limit system, and show how the
high dimensional system couples with the limit system. This can be
seen as a generalization of the work of Chevallier \cite{chevallier}
where a single class is considered under the assumption that the
spiking rate function $\psi$ is uniformly bounded. The multi-class
setup is similar to the one in \cite{SusEva} for ordinary Hawkes
Processes, and to the data transmission model in \cite{carl2}. We
also discuss how to approximate a Hawkes process induced by one weight
function, by another Hawkes process, induced by different weight
functions.

\textbf{Setup in this section:}
	In addition to the fundamental
	assumptions, we introduce the following specifications for the
	mean-field setup, which will be used throughout this section.  We
	partition the indices of individual units into $\ccK \in \N$ different
	populations, where $\ccK > 0$. More precisely, for each fixed total population size $N \in \N ,$
	$$
	N_{k}:=N_{k}\lp N\rp:=\#\lb i\leq N : i\text{ in population } k \rb
	$$
	will denote the number of units belonging to population $k,  1 \le k \le \ccK,$ and
	$$ N = N_1 + \ldots + N_{\ccK} .$$
	We assume that each population represents an asymptotic part of all
	units, i.e., there exists $p_{k}>0$ such that
	$$\frac{N_{k}}{N} \conx{N\con \infty} p_{k}.$$
	For a fixed $N\in \N , $ we re-index  the $N$-dimensional age dependent Hawkes process  of \eqref{system} as
	$$\lp Z^{kj}\rp_{k\leq \ccK,j\leq N_{k}}, $$
	where the superscript $ kj $ denotes the $j$th unit in population $k.$  The weight function from the $i$th unit of population $l$ to the $j$th unit of population $k$ is given by $N^{-1}h_{kl}.$ Moreover,  all units within the same population have the same spiking rate  $\psi^{k}.$ Finally, we assume that all units in population $k$ have the same initial signal $R^{k},$ and that the initial ages are interchangeable in groups and mutually independent in and between groups.
	With this set of parameters, the age dependent Hawkes process $\lp Z,X,A\rp$ from \eqref{system} defined on $t\in \R_+$  becomes

	\begin{eqnarray}
	Z_t^{kj} &= &\int_{0}^{t}\int_{0}^{\infty} \ql \lb z\leq \psi^{k}\lp X^{k}_{s},A^{kj}_{s}\rp\rb d\pi^{kj}\lp ds,dz\rp,\quad j\leq N_{k},k\leq \ccK,\label{meanfieldsystem} \\
	X^{k}_{t}&=&\frac{1}{N}\sum_{l=1}^{\ccK}\sum_{j=1}^{N_{l}}\int_{0}^{t-}h_{kl}\lp t-s\rp Z^{lj}(ds)+R_t^{k}, \quad \quad \;\;\; k\leq  \ccK , \nonumber
	\end{eqnarray}
	where $ A^{kj}$ is the age process of $Z^{kj},$ starting from
	$A^{kj}_{0}$ at time $t=0.$ Sometimes, to explicitly indicate the dependency on $N,
	$ we add $N$ to the superscript and write $ Z_t^{Nki } , X^{Nk } $ and $ A^{Nki} .$

\subsubsection*{Model observations}
	 \begin{enumerate}
	 	\item Suppose that the initial ages $(A^i_0 )_{ i \in \N } $ are exchangeable. Then the symmetry of the system gives interchangeability between units within the same population,  i.e.,  $Z^{kj}\stackrel{\mathcal L}{=}Z^{ki}$ for $i,j\leq N_{k},k\leq \ccK$.
	 	\item In the mean-field setup, all units within a population $k$ share the same memory process $X^{k}$.
	 \end{enumerate}

	 \subsection{The Limit System}
	  We propose a limit system for $N\con \infty$.  To pursue this goal,  take finite variation functions $t\mapsto \alpha^{k}_{t}$, locally bounded functions $t\mapsto \beta^{k}_{t}$, and PRMs $\pi^{k}$ for  $k\leq \ccK,$ and consider the stochastic convolution equation
	\begin{eqnarray}
	 \phi^{k}_{t}&=&\int_{0}^{t} {\mathbb E}\psi^{k}\lp x^{k}_{s},A^{k}_{s} \rp ds , \nonumber\\
	 x^{k}_{t}&=&\sum_{l=1}^{\ccK}p_{l}\int_{0}^{t} h_{kl}\lp t-s \rp d\alpha^{l}_{s}+\beta^{k}_{t} , \label{diff}\\
	 A^{k}_{t}-A^{k}_{0}&=&t-\int_{0}^{t-}\int_{0}^{\infty} A^{k}_{s}\ql\lb z\leq \psi^{k}\lp x^{k}_{s},A^{k}_{s}\rp\rb \pi^{k}\lp ds,dz\rp, \nonumber
	\end{eqnarray}
	with unknown $\lp \phi,x,A\rp = \lp \phi^{k}, x^{k}, A^{k}\rp_{k\leq \ccK}$. Notice that only $A$ is stochastic. Introducing
	$$Z_t^k = \int_{0}^{t}\int_{0}^{\infty} \ql\lb z\leq \psi^{k}\lp x^{k}_{s},A^{k}_{s}\rp\rb \pi^{k}\lp ds,dz\rp, $$
	we can interpret $ A^k $ as age process of $Z^k.$ Hence, in the limit, the network activity can be resumed via the deterministic quantities $x^k , 1 \le k \le {\mathcal K} , $ the only remaining randomness is in the individual age processes. Finally, notice that  $\phi$ depends on the law of $A$.

We are motivated by what for the moment is a heuristic.
$$N^{-1}\sum_{j=1}^{N_{k}}Z^{N kj}\approx  p_{k}{\mathbb E} Z^{k}$$
for large $N,$ where $ (Z^1, \ldots, Z^{\ccK} ) $ denotes the limit process such that each $Z^k $ describes the jump activity of a typical unit belonging to population $k.$ This relation invites the idea that the memory process for $N\con \infty$, $t\mapsto x_{t}$ should satisfy the integral system \eqref{diff} with  $\phi^{k}_{t}=\alpha^{k}_{t}={\mathbb E} Z^k _{t}$ and $\beta^{k}_{t} = {\mathbb E} R^{k1}_{t}$. This motivates the following result.

\begin{lm}
	\label{odelm}
	Let $\beta_{t}=\lp \beta^{k}_{t}\rp_{k\leq \ccK}$ be measurable and locally bounded. There is a unique function $\alpha$ such that $\alpha = \phi$, where $\lp \phi,x,A \rp$ is the solution to \eqref{diff}. Moreover, $\phi$ is continuous and $x$ is bounded on $\lf 0,T \rf$ by a constant $C$ which depends  on $h:= \sum_{k, j } | h_{kj } |,$
$\lV \beta\rV_T$, $T$ and $L$.
\end{lm}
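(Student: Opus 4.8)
The plan is to recast the coupled system \eqref{diff} as a fixed-point equation for the single unknown $\alpha = (\alpha^k)_{k \le \ccK}$, and to apply the Banach fixed-point theorem on the space $C([0,T];\R^{\ccK})$ equipped with a suitable exponentially weighted supremum norm. First I would fix $T > 0$ and, given a candidate $\alpha \in C([0,T];\R^{\ccK})$, define $x^k_t := \sum_{l=1}^{\ccK} p_l \int_0^t h_{kl}(t-s)\,d\alpha^l_s + \beta^k_t$; since $h_{kl} \in \mathcal{L}^1_{loc}$ (indeed locally integrable by the core assumptions) and $\beta$ is locally bounded, $x$ is a well-defined bounded measurable function on $[0,T]$, and the bound on $\|x\|_T$ in terms of $h$, $\|\beta\|_T$, $T$ and $L$ will come out of this step essentially by Gronwall once we close the loop — more precisely, since $\alpha = \phi$ with $\phi^k_t = \int_0^t \E\psi^k(x^k_s, A^k_s)\,ds$, and $\psi^k$ is sublinear by \eqref{psias2}, we get $|\dot\phi^k_t| \le L(1 + |x^k_t|)$, which combined with the convolution bound $|x^k_t| \le h\|\alpha\|_{C[0,t]} + \|\beta\|_T$ yields a closed Gronwall inequality for $\|\alpha\|_{C[0,t]}$, hence the asserted constant $C = C(h, \|\beta\|_T, T, L)$.

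Next I would set up the contraction. Given $\alpha$ (and thus $x$), the third equation of \eqref{diff} is a PDMP-type equation for $A^k$ driven by $\pi^k$; since its jump rate $\psi^k(x^k_s, A^k_s)$ is dominated by $L(1+\|x\|_T) < \infty$ on $[0,T]$, this equation has a pathwise unique non-exploding solution $A^k = A^k[\alpha]$ (constructed inductively over the jumps of $\pi^k$, exactly as in Proposition \ref{prop:13}), and its law depends only on $x^k$, hence measurably on $\alpha$. Define the map $\Phi: \alpha \mapsto \phi$ by $\Phi(\alpha)^k_t := \int_0^t \E\psi^k(x^k_s[\alpha], A^k_s[\alpha])\,ds$. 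A fixed point of $\Phi$ is exactly a solution with $\alpha = \phi$. To estimate $|\Phi(\alpha)^k_t - \Phi(\tilde\alpha)^k_t|$, I would split $\psi^k(x^k_s, A^k_s) - \psi^k(\tilde x^k_s, \tilde A^k_s)$ using \eqref{psias1}: on the event $\{A^k_s = \tilde A^k_s\}$ we gain the Lipschitz bound $L|x^k_s - \tilde x^k_s|$, and on $\{A^k_s \neq \tilde A^k_s\}$ we use the sublinear bound $L(\max(|x^k_s|,|\tilde x^k_s|)+1) \le L(1 + 2C)$, so the latter contribution is controlled by $L(1+2C)\,P(A^k_s \neq \tilde A^k_s)$. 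The point of the age-splitting Lipschitz condition is precisely that the "bad" term is weighted by the probability that the two age processes have decoupled, which in turn is controlled by coupling the two PDMPs $A^k[\alpha]$ and $A^k[\tilde\alpha]$ through the same PRM $\pi^k$: the rate of a decoupling event is $|\psi^k(x^k_s,\cdot) - \psi^k(\tilde x^k_s, \cdot)|$-type, bounded (while ages still agree) by $L|x^k_s - \tilde x^k_s|$, so $P(A^k_s \neq \tilde A^k_s) \le L\int_0^s |x^k_u - \tilde x^k_u|\,du$ by Lemma \ref{compensator}. Combining, $|x^k_s - \tilde x^k_s| \le h \sup_{u \le s}|\alpha_u - \tilde\alpha_u|$ (convolution with an $\mathcal{L}^1$ kernel, after integrating by parts in the Stieltjes integral), so all three terms feed back into $\sup_{u\le s}\|\alpha_u - \tilde\alpha_u\|$, giving
\[
\|\Phi(\alpha) - \Phi(\tilde\alpha)\|_{C[0,t]} \le C' \int_0^t \|\alpha - \tilde\alpha\|_{C[0,s]}\,ds
\]
for a constant $C'$ depending on $h, C, L$. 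Iterating this (or, equivalently, using the weighted norm $\|\alpha\|_\lambda := \sup_{t\le T} e^{-\lambda t}|\alpha_t|$ with $\lambda$ large) makes $\Phi$ a contraction, so it has a unique fixed point on $[0,T]$; uniqueness on $\R_+$ follows by letting $T \to \infty$ and patching. Continuity of $\phi$ is immediate from $\phi^k_t = \int_0^t \E\psi^k(x^k_s, A^k_s)\,ds$ with bounded integrand.

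The main obstacle is the handling of the decoupling term in the contraction estimate: one must justify that, when $\alpha$ and $\tilde\alpha$ are close, the age processes $A^k[\alpha]$ and $A^k[\tilde\alpha]$ can be coupled so tightly that $P(A^k_s \neq \tilde A^k_s)$ is controlled linearly by $\int_0^s |x^k_u - \tilde x^k_u|\,du$; this requires a careful synchronous coupling of the two PDMPs via the common PRM $\pi^k$ together with the compensator formula of Lemma \ref{compensator}, and it is exactly here that the age-dependent Lipschitz assumption \eqref{psias1} — Lipschitz when ages agree, merely sublinear otherwise — does its work. A secondary technical point is the well-definedness and continuity of $t \mapsto x^k_t$ given only local integrability of $h_{kl}$ and finite-variation $\alpha^l$; this is handled by writing the Stieltjes integral $\int_0^t h_{kl}(t-s)\,d\alpha^l_s$ via integration by parts and using dominated convergence, which also gives the claimed a priori bound once the Gronwall loop is closed.
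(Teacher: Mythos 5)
Your overall strategy is the paper's: the paper also solves \eqref{diff} by Picard iteration on $\alpha$ (the constructive form of your Banach fixed point), uses \eqref{psias2} plus the convolution Gronwall lemma (\Cref{convobound}) for the a priori bound on $x$, splits $\psi^k(x_s,A_s)-\psi^k(\tilde x_s,\tilde A_s)$ via \eqref{psias1} into a Lipschitz term and a term weighted by the probability that the two age processes driven by the same PRM have decoupled, and controls that probability by the expected number of discrepancy atoms of $\pi^k$ (Markov's inequality / \Cref{compensator}). So the key ideas are all present and correctly identified.

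There is, however, one step in your plan that would fail as written: the contraction estimate in the sup-norm of $\alpha$. You claim $\lv x^k_s-\tilde x^k_s\rv \le h\,\sup_{u\le s}\lv \alpha_u-\tilde\alpha_u\rv$ ``after integrating by parts in the Stieltjes integral.'' Integration by parts transfers the variation onto $h_{kl}$, which is only assumed locally integrable (and in \Cref{theo:prop} one really wants to allow kernels that are unbounded at $0$ and of unbounded variation), so neither $h_{kl}(0)$ nor $\int \lv dh_{kl}\rv$ is available; and without it, $\int_0^s h_{kl}(s-u)\,d(\alpha^l-\tilde\alpha^l)_u$ is simply not controlled by the uniform distance between $\alpha$ and $\tilde\alpha$ — two functions can be uniformly close while their difference has large, rapidly oscillating variation. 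Consequently the displayed inequality $\lV\Phi(\alpha)-\Phi(\tilde\alpha)\rV_{C[0,t]}\le C'\int_0^t\lV\alpha-\tilde\alpha\rV_{C[0,s]}\,ds$ does not follow from your estimates, and $C([0,T];\R^{\ccK})$ with the (weighted) sup-norm is not the right space for the fixed point in the first place, since $\alpha\mapsto x$ is not even defined on all of it. The repair is exactly what the paper does: restrict to absolutely continuous $\alpha$ with density bounded by $L(1+C)$ and contract in the $L^1$-distance of the densities, i.e.\ work with $\delta_t=\sum_k\int_0^t{\mathbb E}\lv\psi^k(x^k_s,A^k_s)-\psi^k(\tilde x^k_s,\tilde A^k_s)\rv\,ds$; then $\int_0^t\lv x_s-\tilde x_s\rv\,ds\le\int_0^t h(t-s)\delta_s\,ds$ (Lemma 22 of \cite{dfh}) and \Cref{convobound} closes the loop. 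With that substitution your argument goes through; your ``first-decoupling'' bound $P(A^k_s\neq\tilde A^k_s)\le L\int_0^s\lv x^k_u-\tilde x^k_u\rv\,du$ is in fact a slightly cleaner variant of the paper's self-referential bound $\sum_kP(\lV A^{k}-\tilde A^{k}\rV_s>0)\le\delta_s$, which the paper then has to de-tangle with an extra application of Gronwall.
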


The proof is given in the Appendix.  Once this lemma is established, we can ensure existence of the limit process.

\begin{s}\label{koroet}

	Let $\beta_{t}=\lp \beta^{k}_{t}\rp_{k\leq K}$ be measurable and locally bounded. There is a unique solution $\lp Z,A \rp$ to the integral equation
		$$
		Z^{k}_{t} = \int_{0}^{t}\int_{0}^{\infty}\ql\lb z\leq \psi^{k} \lp  \sum_{l=1}^{\ccK}p_{l}\int_{0}^{s} h_{kl}\lp s-u \rp d{\mathbb E} Z^{l}_{u}+\beta^{k}_{s},A^{k}_{s}\rp \rb \pi^{k}\lp ds,dz\rp , 1 \le k \le \ccK,
		$$
		where $A^{k}$ is the age process corresponding to $Z^{k}$, initialized at $A^{k}_{0}$.

\end{s}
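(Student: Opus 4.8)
The plan is to decouple the equation into a purely deterministic fixed-point problem, already solved by Lemma \ref{odelm}, and the counting processes it implicitly determines. First I would invoke Lemma \ref{odelm} to obtain the unique $\alpha=(\alpha^{k})_{k\le\ccK}$ with $\alpha=\phi$, where $(\phi,x,A)$ is the solution of \eqref{diff}, and set
$$
x^{k}_{t}:=\sum_{l=1}^{\ccK}p_{l}\int_{0}^{t}h_{kl}\lp t-s\rp\,d\alpha^{l}_{s}+\beta^{k}_{t},
$$
which by the lemma is a deterministic function, bounded on every interval $[0,T]$ by a constant $C=C(h,\lV\beta\rV_{T},T,L)$.

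Next I define $Z^{k}_{t}:=\int_{0}^{t}\int_{0}^{\infty}\ql\lb z\le\psi^{k}\lp x^{k}_{s},A^{k}_{s}\rp\rb\pi^{k}\lp dz,ds\rp$, using the $x$ just obtained together with the process $A$ furnished by Lemma \ref{odelm}. The third line of \eqref{diff} is exactly the statement that $A^{k}$ is the age process of $Z^{k}$, and since $\psi^{k}\lp x^{k}_{s},\cdot\rp\le L(1+C)$ on $[0,T]$ by \eqref{psias2}, $Z^{k}$ is non-exploding, being stochastically dominated on $[0,T]$ by a homogeneous Poisson process of rate $L(1+C)$. Passing to compensators via \Cref{compensator} gives $\E Z^{k}_{t}=\int_{0}^{t}\E\psi^{k}\lp x^{k}_{s},A^{k}_{s}\rp\,ds=\phi^{k}_{t}=\alpha^{k}_{t}$, so that $x^{k}_{t}=\sum_{l}p_{l}\int_{0}^{t}h_{kl}(t-s)\,d\E Z^{l}_{s}+\beta^{k}_{t}$ and $(Z,A)$ solves the equation of the statement.

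For uniqueness, let $(Z^{'},A^{'})$ be any solution, so that $Z^{'k}$ has intensity $\psi^{k}\lp x^{'k}_{s},A^{'k}_{s}\rp$ with $x^{'k}_{t}:=\sum_{l}p_{l}\int_{0}^{t}h_{kl}(t-s)\,d\E Z^{'l}_{s}+\beta^{k}_{t}$. From \eqref{psias2} one gets $\E Z^{'k}_{t}\le L\int_{0}^{t}\lp 1+|x^{'k}_{s}|\rp ds$; inserting the convolution expression for $x^{'}$ and closing a Grönwall estimate as in the proof of Lemma \ref{odelm} shows that $t\mapsto\E Z^{'k}_{t}$ is locally bounded. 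Setting $\alpha^{'l}:=\E Z^{'l}$, the triple $(\E Z^{'k},x^{'k},A^{'k})_{k\le\ccK}$ is then a solution of \eqref{diff} for the parameter $\alpha^{'}$ and satisfies $\phi=\alpha^{'}$; the uniqueness part of Lemma \ref{odelm} therefore forces $\alpha^{'}=\alpha$, whence $x^{'}=x$. Since $Z^{'}$ and $Z$ now satisfy the same thinning relation, driven by the same PRM $\pi^{k}$ with the same deterministic curve $s\mapsto\psi^{k}\lp x^{k}_{s},\cdot\rp$, an induction over the successive jumps of $Z^{k}+Z^{'k}$ gives $Z^{k}=Z^{'k}$, and hence $A^{k}=A^{'k}$.

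The main obstacle is the a priori finiteness of $\E Z^{'k}_{t}$ for an arbitrary solution, since without it one cannot even form the deterministic equation needed to invoke Lemma \ref{odelm}. I would handle this by first truncating the intensity (or stopping at the $n$th jump of $Z^{'k}$), deriving the Grönwall bound on the truncated process over a bounded horizon from \eqref{psias2} and the local integrability of the $h_{kl}$, and only then removing the truncation. The pathwise thinning construction and the compensator identity are otherwise routine.
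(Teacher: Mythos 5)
Your proposal is correct and follows essentially the same route as the paper: both existence and uniqueness are reduced to Lemma \ref{odelm}, with $Z^k$ defined by thinning against the deterministic $x^k$ and the compensator identity giving $\E Z^k = \phi^k = \alpha^k$. Your extra care about the a priori local finiteness of $\E Z'^k$ and the final induction over jumps are harmless refinements of what the paper does implicitly (the paper gets $A'=A$ directly from the uniqueness in Lemma \ref{odelm} and concludes $Z=Z'$ from the common thinning representation).
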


	\begin{proof}
		Let $\lp \phi,x,A \rp_{k\leq \ccK}$ be the tuple given in  \Cref{odelm}. Define the counting process
		$$
		Z^{k}_{t}:=\int_{0}^{t}\int_{0}^{\infty}\ql \lb z\leq \psi^{k}\lp x^{k}_{s},A^{k}_{s}\rp\rb \pi^{k}\lp ds,dz\rp.
		$$
		It is clear that $A^{k}$ is the age process of $Z^{k}$, and since $d{\mathbb E} Z^{k}_{t}={\mathbb E}\psi^{k}\lp x^{k}_{t},A^{k}_{t} \rp dt $,  $Z^{k}$ will satisfy the desired identity. For uniqueness, consider another solution $\lp \tilde{Z}^{k},\tilde{A}^{k} \rp_{k\leq \ccK}$, which satisfies the same identity :
		$$
		\tilde{Z}^{k}_{t}= \int_{0}^{t}\int_{0}^{\infty}\ql\lb z\leq \psi^{k} \lp  \sum_{l=1}^{\ccK}p_{l}\int_{0}^{s} h_{kl}\lp s-u \rp d{\mathbb E} \tilde{Z}^{l}_{u}+\beta^{k}_{s},\tilde{A}^{k}_{s}\rp \rb \pi^{k}\lp ds,dz\rp.
		$$
		Defining $\tilde{x}^{k}_{t}=\sum_{l=1}^{\ccK}p_{l}\int_{0}^{t} h_{kl}\lp t-s \rp d{\mathbb E} \tilde{Z}^{l}_{u}+\beta^{k}_{t}$ and $\tilde{\phi}^{k}_{t}=\int_{0}^{t} {\mathbb E} \psi^{k}\lp \tilde{x}^{k}_{s},\tilde{A}^{k}_{s} \rp ds$, we note that ${\mathbb E} \tilde{Z}^{k}_{t}=\tilde{\phi}^{k}_{t}$. Thus, if we insert $\alpha = \tilde{\phi}$ in \eqref{diff}, $\lp \tilde{\phi}, \tilde{x},\tilde{A} \rp$ is a solution, and hence the uniqueness part of \Cref{odelm} gives that $\lp \tilde{\phi}, \tilde{x},\tilde{A} \rp = \lp \phi, x , A \rp$ and thus also $Z = \tilde{Z}$.
	\end{proof}

\subsection{Large network asymptotics and weight approximations}
In this section we couple the $N$-dimensional Hawkes process with the
limit system proposed in the previous section.
This coupling implies that the finite-dimensional system
  converges to the limit system. The result is traditionally named
  {\it Propagation of Chaos}, a typical result within mean-field
  theory.  Specifically for Hawkes processes, there are several
variants of this result. Some of the recent results may be found in
\cite{chevallier}, \cite{dfh} and in \cite{SusEva}.

\subsubsection*{Framework for Propagation of Chaos}

We will first introduce a set of assumptions.

\begin{ass}\label{ass:R}
We  are given, for each $1 \le k\leq \ccK,$ a sequence $\lp
          R^{Nk}\rp_{N\in \N}$ of initial signals with $\sup_{k\leq \ccK,N\in \N} \lV {\mathbb E} R^{Nk} \rV_{t}<\infty,$ such that there is a locally bounded function $t\mapsto r^{k}_{t}$ with
	\begin{align}
	\int_{0}^{t}{\mathbb E} \lv R^{Nk}_{s}-r^{k}_{s}\rv ds \to 0
          \, \mbox{ as } \, N \to \infty, \label{Rconv}
	\end{align}
	for all $t\geq 0$.
\end{ass}

\begin{ass}\label{ass:A}
	The initial ages $ A_0^{k i }, 1 \le k \le \ccK, 1 \le i < \infty $ are i.i.d.
	\end{ass}

	\begin{ass}\label{ass:h}
The weight functions $h^{N}_{kl} : \R_{+}\con \R$ satisfy
$h^{N}_{kl}\conx{} h_{kl}$ as $N \to \infty $ locally in ${\mathcal
  L}^{1},$ where $h_{kl}\in {\mathcal L}^{2}_{loc}$ for all $ 1 \le k,
l \le \ccK$.
\end{ass}

Consider an i.i.d.\ sequence of driving PRMs $ \pi^{kj } , 1 \le k \le \ccK, j \geq 1 .$ Define for each $N\in\N, $ the $N$-dimensional Hawkes process
$$\lp Z^{N},X^{N},A^{N} \rp = \left( Z^{Nki}, X^{Nk}, A^{Nki} \right)_{k \le \ccK, i \le N_k } ,$$
given by \eqref{meanfieldsystem}, driven by $(\pi^{kj }),$ with weight functions $\lp N^{-1 }h^{N}_{kl}\rp$, spiking rate $\lp \psi^{k}\rp$ and initial processes $\lp R^{Nk}\rp.$

Applying \Cref{koroet} with weight functions $\lp h_{kl}\rp$ and initial functions $\beta^{k} = r^{k},$ we obtain, for any $ 1 \le k \le \ccK$ and for all $ i \in \N, $ a solution $(Z^{ki}, X^k, A^{ki} )  $ to the equation
$$ Z^{ki}_{t} = \int_{0}^{t}\int_{0}^{\infty}\ql\lb z\leq \psi^{k} \lp  \sum_{l=1}^{\ccK}p_{l}\int_{0}^{s} h_{kl}\lp s-u \rp d{\mathbb E} Z^{lj }_{u}+r^{k}_{s},A^{ki}_{s}\rp \rb \pi^{ki}\lp ds,dz\rp , $$
$1 \le k \le \ccK, i \in \N ,$ driven by the same sequence of PRMs.

\begin{s}[Propagation of Chaos]\label{theo:prop}
	Consider the framework described above and grant Assumptions \ref{ass:R}--\ref{ass:h}. Then for all $ t \geq 0,$
	\begin{align} \label{conresult}
	 {\mathbb E} \lv d\lp Z^{Nki}_{t}-Z^{ki}_{t} \rp\rv 	\conx{} 0,
	 \text{ for } \; N \con \infty ,
	\end{align}
	for all $k\leq \ccK,i\in \N$. In particular, for any finite set of indices $\lp k_{1}, i_{1},\dots ,k_{n},i_{n}\rp ,$
	we have weak convergence
	\begin{align*}
	\lp Z^{Nk_{1}i_{1}},\dots ,Z^{Nk_{n}i_{n}}\rp_{t\geq 0}\conx{wk}\lp Z^{k_{1}i_{1}},\dots ,Z^{k_{n}i_{n}}\rp_{t\geq 0}
	\end{align*}
	as $ N \to \infty $ (in $ D ( \R_+, \R_+^n ), $ endowed with the topology of locally-uniform convergence).
\end{s}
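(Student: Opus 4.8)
The plan is a coupling argument. Both systems are already driven by the same i.i.d.\ PRMs $\pi^{ki}$ and started from the same i.i.d.\ initial ages, so I work with
$$ g^{Nk}_t := \E\,\big|Z^{Nki}-Z^{ki}\big|\big([0,t]\big), $$
which by the interchangeability of units within a population (Model observation 1) and the i.i.d.\ structure of the limit copies is independent of $i$, and which is finite for each fixed $N,t$ because both systems are non-exploding (\Cref{prop:13} and \Cref{koroet}). By the compensator formula (\Cref{compensator}), $g^{Nk}_t=\E\int_0^t\big|\psi^k(X^{Nk}_s,A^{Nki}_s)-\psi^k(X^k_s,A^{ki}_s)\big|\,ds$, and \eqref{conresult} amounts to $g^{Nk}_t\to 0$.

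First I would bound the integrand via \eqref{psias1}. On $\{A^{Nki}_s=A^{ki}_s\}$ this gives $L|X^{Nk}_s-X^k_s|$; on $\{A^{Nki}_s\neq A^{ki}_s\}$ only $L(|X^{Nk}_s|+|X^k_s|+1)$. The key observation is that $A^{Nki}_s\neq A^{ki}_s$ forces $|Z^{Nki}-Z^{ki}|([0,s))\ge 1$, hence $P(A^{Nki}_s\neq A^{ki}_s)\le g^{Nk}_s$; combined with $|X^{Nk}_s|\,\ql\{A^{Nki}_s\neq A^{ki}_s\}\le |X^k_s|\,\ql\{A^{Nki}_s\neq A^{ki}_s\}+|X^{Nk}_s-X^k_s|$ and the deterministic bound $\|X^k\|_T\le C_T$ from \Cref{odelm}, the ``ages differ'' part is absorbed into $\int_0^t g^{Nk}_s\,ds$ plus a further copy of $\int_0^t\E|X^{Nk}_s-X^k_s|\,ds$. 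This yields
$$ g^{Nk}_t\le 2L\int_0^t\E\big|X^{Nk}_s-X^k_s\big|\,ds+L(2C_T+1)\int_0^t g^{Nk}_s\,ds. $$
Dropping the indicator here is what makes the scheme close: a Cauchy--Schwarz bound would instead produce a term $\propto\int_0^t\sqrt{g^{Nk}_s}\,ds$, which with a fixed constant does not force $g^{Nk}\to0$, so I expect this step to be the main obstacle.

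Next I would estimate $\E|X^{Nk}_s-X^k_s|$ by splitting $X^{Nk}_s-X^k_s$ into four pieces: (I) the feedback term $\tfrac1N\sum_{l,j}\int_0^{s-}h^N_{kl}(s-u)(Z^{Nlj}-Z^{lj})(du)$, whose time-integral is, after a Stieltjes integration by parts (using $g^{Nl}$ continuous with $g^{Nl}_0=0$), bounded by $\sum_l\int_0^t|h^N_{kl}(t-u)|\,g^{Nl}_u\,du$; (II) the weight-approximation term built from $h^N_{kl}-h_{kl}$, with expectation $\le\sum_l\|h^N_{kl}-h_{kl}\|_{\mathcal L^1[0,s]}\sup_{u\le s}\E\lambda^{l1}_u\to0$; (III) the law-of-large-numbers term $\tfrac1N\sum_{l,\,j\le N_l}\int h_{kl}(s-u)Z^{lj}(du)-\sum_l p_l\int h_{kl}(s-u)\,d\E Z^{l1}_u$, whose $L^1$-norm tends to $0$ by independence of the limit copies over $j$, a variance estimate using $h_{kl}\in\mathcal L^2_{loc}$, and $N_l/N\to p_l$; and (IV) the initial-signal term $R^{Nk}_s-r^k_s$, controlled by \eqref{Rconv}. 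Each of (II)--(IV) is dominated on $[0,T]$ uniformly in $N$ (via $\|X^k\|_T\le C_T$ and $\sup_{k,N}\|\E R^{Nk}\|_T<\infty$), so $\varepsilon^N_T:=2L\sup_{t\le T}\sum_k\int_0^t(\E|(\mathrm{II})_s|+\E|(\mathrm{III})_s|+\E|(\mathrm{IV})_s|)\,ds\to0$. Summing the resulting inequality over $k$ gives a single Volterra inequality for $g^N_t:=\sum_k g^{Nk}_t$ with an $\mathcal L^1$ kernel bounded uniformly in $N$ (since $\sum_{k,l}|h^N_{kl}|\to\sum_{k,l}|h_{kl}|$ in $\mathcal L^1_{loc}$), and a Grönwall argument for Volterra inequalities yields $g^N_T\le C(T)\varepsilon^N_T\to0$, i.e.\ \eqref{conresult}.

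For the last assertion, since $\sup_{s\le t}|Z^{Nk_m i_m}_s-Z^{k_m i_m}_s|\le|Z^{Nk_m i_m}-Z^{k_m i_m}|([0,t])$, \eqref{conresult} shows that under this coupling each coordinate converges uniformly on compacts in $L^1$, hence in probability; joint convergence in probability of a finite vector in the locally uniform topology follows, and this implies the stated convergence in distribution in $D(\R_+,\R_+^n)$. Besides the age term, the only other point demanding care is keeping the feedback term (I) in genuine convolution form, so that the Grönwall estimate is uniform in $N$.
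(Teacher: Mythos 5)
Your proposal is correct and follows essentially the same route as the paper: a Gr\"onwall--Volterra estimate on the expected total-variation distance, with the age term absorbed via $P(A^{Nki}_s\neq A^{ki}_s)\le g^{Nk}_s$ and the deterministic bound on $x^k$ (exactly the device used in the proofs of Lemma \ref{odelm} and Lemma \ref{weightapproximation}), and with the memory difference split into feedback, weight-approximation, law-of-large-numbers and initial-signal pieces. The only difference is organizational: the paper factors the weight-approximation error out through an intermediate $N$-dimensional system with kernels $h_{kl}$ and invokes Lemma \ref{weightapproximation}, and splits your term (III) into a compensator-versus-mean part and a martingale part, whereas you fold everything into a single Volterra inequality --- both versions close for the same reasons.
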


To prove this theorem we shall need the following lemma.

\begin{lm}	\label{weightapproximation}
 Let $( h_{kl})_{ 1 \le k, l \le \ccK } ,( \tilde{h}_{kl})_{1 \le k,l \le \ccK} $ be sets included in a family ${\mathcal E}$ of real-valued functions defined on $ \R_+$ which is uniformly integrable on $[0,T] .$ Define $( Z,X,A ),( \tilde{Z},\tilde{X},\tilde{A} )$ as the $N-$dimensional age dependent Hawkes process with weight functions $( N^{-1 }h_{kl})_{ 1 \le k, l \le \ccK } ,$ $( N^{-1 }\tilde{h}_{kl})_{1 \le k,l \le \ccK} $, rate functions $\lp \psi^{k}\rp_{ k \le \ccK} $, and with initial conditions $A_{0}$, $ R^{k}$. There exists $C>0$ depending on the family ${\mathcal E} $, on $T,L,\ccK$  and on $\sup_{k\leq \ccK}\lV {\mathbb E} R^{k}\rV_{T}$ (but not on $N$) such that
	\begin{align*}
	\sum_{k=1 }^{\mathcal{K} }{\mathbb E} \lv d\lp Z^{k1}_{t}-\tilde{Z}^{k1}_{t}\rp	\rv\leq C_{T}\sum_{k,l=1 }^{\mathcal{K} }\int_{0}^{t}\lv  h_{kl }-\tilde{h}_{kl }\rv \lp s \rp ds ,
	\end{align*}
	for all $t\leq T. $
\end{lm}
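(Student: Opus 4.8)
Both systems are driven by the same PRMs $(\pi^{kj})$, so the proof is a coupling argument controlling the expected number of ``discrepancy'' points. Fix $T>0$, write $\lambda^{ki}_s:=\psi^k(X^k_s,A^{ki}_s)$ and $\tilde\lambda^{ki}_s:=\psi^k(\tilde X^k_s,\tilde A^{ki}_s)$, and set $v^k_t:=\mathbb E\int_0^t|d(Z^{k1}_s-\tilde Z^{k1}_s)|$, $D^{k1}_t:=\int_0^t|d(Z^{k1}_s-\tilde Z^{k1}_s)|$ and $\delta(t):=\sum_{k,l=1}^{\ccK}\int_0^t|h_{kl}-\tilde h_{kl}|(s)\,ds$, which is increasing. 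By interchangeability of the units within a population $v^k_t$ is independent of the chosen representative, and since two counting measures driven by the same PRM differ exactly on the ($z$-)interval between their (predictable) intensities, the compensation formula gives $v^k_t\le\mathbb E\int_0^t|\lambda^{k1}_s-\tilde\lambda^{k1}_s|\,ds$. I would first record the a priori bound $\sup_{s\le T}(\mathbb E|X^k_s|+\mathbb E|\tilde X^k_s|)\le C_0$, uniform in $N$: taking expectations in the memory equation, using \eqref{psias2}, interchangeability, $\mathbb E\int_0^{s-}|h_{kl}(s-u)|Z^{lj}(du)=\int_0^{s-}|h_{kl}(s-u)|\mathbb E\lambda^{lj}_u\,du$, and the fact that uniform integrability of $\mathcal E$ on $[0,T]$ gives $H:=\sup_{h\in\mathcal E}\int_0^T|h|<\infty$, one obtains a renewal (generalised Grönwall) inequality with $\mathcal L^1$-kernel for $t\mapsto\max_k(\mathbb E|X^k_t|+\mathbb E|\tilde X^k_t|)$; the $N^{-1}$ normalisation cancels the sum over the $N_l$ units, so $C_0$ depends only on $\mathcal E,L,T,\ccK$ and $\sup_k\|\mathbb E R^k\|_T$. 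Hence $\sup_{s\le T}\mathbb E(\lambda^{k1}_s+\tilde\lambda^{k1}_s)\le C_1$; in particular each $v^k$ is Lipschitz on $[0,T]$ with $v^k_0=0$, and $\mathbb E Z^{k1}_T,\mathbb E\tilde Z^{k1}_T\le C_1 T$.

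\textbf{Splitting the intensity gap.} Using \eqref{psias1} I would split according to whether the ages agree:
\[ |\lambda^{k1}_s-\tilde\lambda^{k1}_s|\le L|X^k_s-\tilde X^k_s|+L\bigl(|X^k_s|+|\tilde X^k_s|+1\bigr)\,\ql\{A^{k1}_s\ne\tilde A^{k1}_s\}. \]
For the first term, expanding $X^k_s-\tilde X^k_s$ (the common initial signals $R^k$ cancel), adding and subtracting $N^{-1}\sum_{l,j}h_{kl}(s-\cdot)\tilde Z^{lj}$ and taking expectations — using that the expected variation measure of $Z^{l1}-\tilde Z^{l1}$ has distribution function $v^l$ and that $\mathbb E\int_0^{s-}|h_{kl}-\tilde h_{kl}|(s-u)\tilde Z^{lj}(du)\le C_1\delta(s)$ — one gets, up to the fixed constant $\sup_N\max_l N_l/N$,
\[ \mathbb E|X^k_s-\tilde X^k_s|\le\sum_{l=1}^{\ccK}\int_0^{s-}|h_{kl}(s-u)|\,dv^l_u+C_1\,\delta(s). \]

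\textbf{The main obstacle: the age-disagreement term.} The essential difficulty is the second term of the split, where $\psi^k$ is only \emph{sub-linear} (not Lipschitz) in $x$ because the ages differ, so it cannot be absorbed into a Lipschitz difference. The key point is that $A^{k1}$ and $\tilde A^{k1}$ start equal and can differ only after a jump belonging to exactly one of $Z^{k1},\tilde Z^{k1}$; hence $\ql\{A^{k1}_s\ne\tilde A^{k1}_s\}\le D^{k1}_{s-}$, so $\mathbb P(A^{k1}_s\ne\tilde A^{k1}_s)\le\mathbb E D^{k1}_{s-}\le v^k_s$, which makes this term self-referential in $v^k$. The remaining step, which I expect to be the crux of the proof, is to disentangle $|X^k_s|+|\tilde X^k_s|+1$ from the event $\{A^{k1}_s\ne\tilde A^{k1}_s\}$ so as to show that its contribution, after integration, is at most a constant multiple of $\int_0^t v^k_s\,ds$: since unit $(k1)$ enters its own memory process $X^k$ only through the weight $N^{-1}h_{kk}$, one combines the a priori first-moment bounds with the $N^{-1}$ normalisation to get $\mathbb E[(|X^k_s|+|\tilde X^k_s|+1)\ql\{A^{k1}_s\ne\tilde A^{k1}_s\}]\le C_2 v^k_s$ on $[0,T]$, with $C_2$ independent of $N$. (Equivalently one bounds $\int_0^t(|X^k_s|+|\tilde X^k_s|+1)D^{k1}_{s-}\,ds\le D^{k1}_t\int_0^t(|X^k_s|+|\tilde X^k_s|+1)\,ds$ and controls the product by the same ingredients; here uniform integrability of $\mathcal E$, rather than a mere uniform $\mathcal L^1$ bound, is what keeps the constant robust as the kernels range over $\mathcal E$, which is precisely what the application of the lemma in Theorem \ref{theo:prop} requires.)

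\textbf{Closing the estimate.} Inserting the two bounds into $v^k_t\le\mathbb E\int_0^t|\lambda^{k1}_s-\tilde\lambda^{k1}_s|\,ds$ yields, for all $t\le T$ and all $k$, an inequality of the form
\[ v^k_t\le c_1\,\delta(t)+c_2\int_0^t\Bigl(\sum_{l=1}^{\ccK}\int_0^{s-}|h_{kl}(s-u)|\,dv^l_u\Bigr)ds+c_3\int_0^t v^k_s\,ds, \]
with $c_1,c_2,c_3$ depending only on $\mathcal E,L,T,\ccK$ and $\sup_k\|\mathbb E R^k\|_T$. Summing over $k$, writing $W_t:=\sum_{k}v^k_t$ and $g:=\sum_{k,l}|h_{kl}|\in\mathcal L^1[0,T]$, and using an integration by parts (valid since $W$ is continuous with $W_0=0$) to rewrite the double integral as $\int_0^t g(t-u)W_u\,du$, one arrives at
\[ W_t\le \ccK\,c_1\,\delta(t)+\int_0^t\bigl(c_2\,g(t-u)+c_3\bigr)W_u\,du. \]
The generalised Grönwall inequality with the $\mathcal L^1$-resolvent of the kernel $u\mapsto c_2 g(u)+c_3$ then gives $W_t\le C_T\,\delta(t)$ with $C_T$ depending only on $\mathcal E,T,L,\ccK$ and $\sup_k\|\mathbb E R^k\|_T$; since $\delta(t)\le\delta(T)$, this is exactly the claimed inequality.
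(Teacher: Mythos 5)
Your proposal follows essentially the same route as the paper's proof: an a priori first-moment bound on $\E|X^k_t|,\E|\tilde X^k_t|$ from uniform integrability of $\mathcal E$ plus the generalized Gr\"onwall lemma, the split of the intensity gap via \eqref{psias1} according to whether the ages agree, the observation (Markov's inequality) that $P(A^{k1}_s\neq\tilde A^{k1}_s)\le v^k_s$, the decomposition of $\E|X^k_s-\tilde X^k_s|$ into a $\int_0^s|h_{kl}-\tilde h_{kl}|$ term plus a convolution of $|\tilde h_{kl}|$ against $dv^l$, and a closing Gr\"onwall argument with an $\mathcal L^1$ kernel. The only real difference is one of emphasis: you isolate the cross term $\E\bigl[(|X^k_s|+|\tilde X^k_s|+1)\ql\{A^{k1}_s\neq\tilde A^{k1}_s\}\bigr]$ as the crux and sketch a justification, whereas the paper simply absorbs it into the dynamic constant $C$ without comment — your instinct that this step needs an argument (first moments alone do not control the product of $|X^k_s|$ with the disagreement indicator) is sound, and neither your sketch nor the paper fully spells it out.
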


The proofs of \Cref{theo:prop} and \Cref{weightapproximation} may be found in the Appendix.

\begin{rem}
The result shows that finitely many units will be asymptotically independent for $N\con \infty.$
\end{rem}

\subsection{The mean-field limit in the case of a hard refractory period}
In this section we consider the mean-field limit of age dependent
Hawkes processes with one single population ($\ccK= 1 $) and a weight
function given by an Erlang kernel as in Example \ref{ex:erlang}, that is,
$$ h(t)  = b e^{ - \nu t } \frac{t^n }{n!} , $$
for some fixed constants $b \in \R, \nu  > 0 , $ $n \in \N.$
Throughout this section we suppose a hard refractory period of
  length $\delta$ after a jump where no new jumps can occur as given
in the following assumption.
\begin{ass}\label{eq:hard}
$$ \psi (x, a) = f(x) \ql \{ a \geq \delta \}.$$
\end{ass}
We start by rewriting the limit system in this frame. Recall that
\begin{equation}\label{eq:limitlambda}
\phi_t  = \int_0^t E ( \psi ( x_s ,  A_s) ) ds   = \int_0^t \bar \lambda_s ds,
\end{equation}
where
\begin{equation*}
\bar \lambda_t = E ( \psi ( x^{(0)}_t ,  A_t) )
\end{equation*}
denotes the expected number of jumps up to time $t$ of a typical unit in the limit system.
As in Section \ref{sec:Erlang} above, we write $ x^{(0) } := x,  $ and we add auxiliary variables $x^{(i)}, 1 \le i \le n $ to obtain the system
$$  A_{t} = A_0 + t  - \int_0^t \int_{\R_+} A_{s}  \ql {\{ z \le \psi ( x^0_s,   A_{s})\}} \pi\lp ds,dz\rp$$
together with
\begin{eqnarray}\label{eq:cascade}
  dx^{(0)}_t&=&-\nu x^{(0)}_tdt + x^{(1)}_tdt ,\\
 &\vdots& \nonumber \\
dx^{(n-1)}_t&=&-\nu  x^{(n-1)}_tdt + x^{(n)}_tdt , \nonumber\\
dx^{(n)}_t&=&-\nu  x^{(n)}_tdt+b  d \phi_t= -\nu  x^{(n)}_tdt+b   \bar \lambda_t dt .  \nonumber
\end{eqnarray}
Let us now study the age process of this limit system. Write $ \tau_t = \sup \{ 0 \le s \le t : \Delta  A_s \neq 0 \} $ for the last jump time of the process before time $t,$ where by convention, $ \sup \emptyset := 0 .$ Then obviously,
$$ A_{t+} = (t- \tau_t) \ql{\{ \tau_t > 0 \}} + (A_0 + t ) \ql{\{ \tau_t = 0 \}}.$$
Due to Assumption \ref{eq:hard}, we have the following
\begin{prop}
$$ {\mathcal L} ( \tau_t) (dz) = \E( e^{- \int_0^t  f(x_s^{(0)} ) \ql \{
    A_0 + s \geq \delta \} ds } ) \delta_0  (dz) + f(x_z^{(0)}) p_{z}
e^{ - \int_{z+ \delta}^t f( x_s^{(0)}) ds } \ql { \{0 < z < t \}} dz,$$
where $p_t = P ( A_t \geq \delta ) $ is given by
\begin{eqnarray*}
p_t &=& \E \left( \ql {\{ A_0 \geq \delta - t \}} e^{- \int_{(\delta - A_0) \vee 0} ^t  f(x_s^{(0)} ) ds  } \right) + \int_0^{t- \delta} f(x_s^{(0)}) p_s e^{- \int_{s + \delta }^t f( x_u^{(0)}) du } ds \\
&=& \int_{ (\delta - t )\vee 0}^\infty \mu_0 ( da)   e^{- \int_{(\delta - a) \vee 0} ^t  f(x_s^{(0)} ) ds  } + \int_0^{t- \delta} f(x_s^{(0)}) p_s e^{- \int_{s + \delta }^t f( x_u^{(0)}) du } ds ,
\end{eqnarray*}
where $ A_0 \sim \mu_0 ( da)  .$
\end{prop}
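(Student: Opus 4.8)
The plan is to exploit that, by Lemma~\ref{odelm}, the path $s\mapsto x^{(0)}_s$ entering \eqref{eq:cascade} is deterministic, so that, writing $\rho(s):=f(x^{(0)}_s)$, the age process $(A_t)$ of the limit system is a time\nobreakdash-inhomogeneous Markov (piecewise\nobreakdash-deterministic) process which drifts upward at unit speed and jumps down to $0$ at rate $\rho(t)\ql\{A_t\geq\delta\}$, driven by $\pi$. Since $x^{(0)}$ is bounded on compacts and $f$ is Lipschitz, $\rho$ is locally bounded; moreover the refractory period forces consecutive jumps to be at least $\delta$ apart, so the number of jumps in $[0,t]$ is at most $t/\delta+1$. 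Denote by $0<T_1<T_2<\cdots$ the successive jump times of $A$ (equivalently of $Z$, whose jumps coincide), so that $\tau_t=T_k$ on $\{T_k\le t<T_{k+1}\}$ and $\tau_t=0$ on $\{T_1>t\}$. After each $T_k$ the age recursion restarts from $A_{T_k+}=0$ driven by the restriction of $\pi$ to $(T_k,\infty)\times\R_+$, which, conditionally on $\mathcal{F}_{T_k}$, is again a PRM with Lebesgue intensity; this is the strong Markov property that underlies the whole computation.

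First I would compute the atom at $0$: $P(\tau_t=0)=P(T_1>t)$. Conditionally on $A_0$, no jump can occur before time $(\delta-A_0)\vee0$ and jumps thereafter occur at rate $\rho$, so $P(T_1>t\mid A_0)=\exp\lp-\int_0^t\rho(s)\ql\{A_0+s\geq\delta\}\,ds\rp$, and taking expectations gives the $\delta_0$\nobreakdash-component of $\mathcal{L}(\tau_t)$. Next, for $0<z<t$, the decomposition $\{\tau_t=z\}=\bigcup_{k\geq1}\{T_k=z,\ T_{k+1}>t\}$ together with the strong Markov property gives
\[
 P(\tau_t\in dz)=\Big(\sum_{k\geq1}P(T_k\in dz)\Big)\,q(z,t),\qquad q(z,t):=\exp\Big(-\int_z^t\rho(s)\ql\{s\geq z+\delta\}\,ds\Big),
\]
where $q(z,t)$ (written as $e^{-\int_{z+\delta}^t f(x_u^{(0)})du}$, hence equal to $1$ when $z>t-\delta$) is the probability that the age process restarted from $0$ at time $z$ has no jump in $(z,t]$, the same for every $k$. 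The mean measure $m(dz):=\sum_{k\geq1}P(T_k\in dz)$ satisfies $m((0,t])=\E Z_t=\E\int_0^t\rho(s)\ql\{A_s\geq\delta\}\,ds=\int_0^t\rho(s)p_s\,ds$, using that $Z$ has compensator $\int_0^t\rho(s)\ql\{A_s\geq\delta\}\,ds$ and that $P(A_s\geq\delta)=p_s$; hence $m(dz)=f(x_z^{(0)})\,p_z\,dz$, which yields the stated density of $\tau_t$ on $(0,t)$.

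It then remains to identify $p_t=P(A_t\geq\delta)$, which I would do by splitting according to whether a jump has occurred before $t$, using $A_{t+}=(t-\tau_t)\ql\{\tau_t>0\}+(A_0+t)\ql\{\tau_t=0\}$. On $\{T_1>t\}$ one has $A_t\geq\delta$ iff $A_0\geq\delta-t$, contributing $\E\lp\ql\{A_0\geq\delta-t\}e^{-\int_{(\delta-A_0)\vee0}^t\rho(s)\,ds}\rp$; on $\{T_1\le t\}$ one has $A_t=t-\tau_t\geq\delta$ iff $\tau_t\le t-\delta$, which by the density just obtained contributes $\int_0^{t-\delta}f(x_s^{(0)})\,p_s\,e^{-\int_{s+\delta}^t\rho(u)\,du}\,ds$. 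Summing the two terms gives the first displayed expression for $p_t$; the second is the same quantity written as an integral against the law $\pi_0$ of $A_0$, using $\ql\{A_0\geq\delta-t\}=\ql\{A_0\geq(\delta-t)\vee0\}$ since $A_0\geq0$.

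The main obstacle is making rigorous the renewal/memorylessness step: one must justify that, conditionally on a jump at time $z$ (and on $\mathcal{F}_z$), the subsequent evolution of $A$ is a fresh copy of the system started from age $0$ at time $z$, independent of the past. This is the strong Markov property of the piecewise\nobreakdash-deterministic process $(A_t)$ — equivalently, a consequence of the strong Markov property of the driving PRM $\pi$ at the stopping times $T_k$ combined with the explicit age recursion — and it is what legitimizes both the factorization $P(\tau_t\in dz)=m(dz)\,q(z,t)$ and the identification of $m(dz)$ with the compensator $f(x_z^{(0)})\,p_z\,dz$. The rest is routine bookkeeping with the indicator conventions near $z=t-\delta$ and $t=\delta$.
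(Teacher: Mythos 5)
Your argument is correct, and it fills in a proof that the paper actually omits: the proposition is stated there without justification beyond the remark ``Due to \eqref{eq:hard}, we have the following.'' The two points you isolate --- that the deterministic limit intensity $f(x^{(0)}_s)$ together with the age reset makes the survival factor $q(z,t)=e^{-\int_{z+\delta}^t f(x^{(0)}_u)\,du}$ depend only on the last jump time $z$, and that the mean jump measure is identified with the compensator $f(x^{(0)}_z)p_z\,dz$ (finite thanks to the hard refractory bound $Z_t\le t/\delta+1$) --- are exactly what is needed, and the bookkeeping for $p_t$ and the atom at $0$ is carried out correctly.
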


In particular, the above representation shows that, even starting from a non-smooth initial trajectory, $p_t$ is eventually smooth.
\begin{koro}
For any starting law $\mu_0 ( da) , $ $t \mapsto p_t$ is continuous on
$ ]  \delta , \infty ],$  and thus, taking into account
\eqref{eq:cascade}, $ C^1 ( ] 2 \delta , \infty [, \R ) ,$
solving
$$ d p_t = - f ( x_t^{(0)} ) p_t dt + f(x_{t- \delta }^{(0)}) p_{t- \delta } dt , \mbox{ for all } t > 2 \delta .$$
\end{koro}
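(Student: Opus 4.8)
The plan is to read the conclusion directly off the explicit formula for $p_t$ furnished by the preceding Proposition, after exploiting a factorization that isolates the $t$-dependence. First note that in the limit system the memory components are deterministic: by \eqref{eq:cascade} each $x^{(k)}$ solves a linear ODE driven by $d\phi_t = \bar\lambda_t\,dt$, where $\bar\lambda_t = \E(\psi(x^{(0)}_t,A_t)) = f(x^{(0)}_t)\,p_t$ is locally bounded (the trajectory $x$ is bounded on compacts by Lemma \ref{odelm}, Lemma \ref{Xbound} and Corollary \ref{cor:hbounded}, and $0\le p_t\le1$). Hence $\phi$ is locally Lipschitz and, solving \eqref{eq:cascade} from $x^{(n)}$ up to $x^{(0)}$, every $x^{(k)}$ is continuous on $\R_+$; in particular $x^{(0)}\in C^1(\R_+)$ since $\dot x^{(0)}_t = x^{(1)}_t - \nu x^{(0)}_t$ is continuous. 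Set $F(t):=\int_0^t f(x^{(0)}_u)\,du$, which is $C^1$ because $f$ is $L$-Lipschitz and $x^{(0)}$ continuous. For $t>\delta$ we have $(\delta-t)\vee 0 = 0$, and writing $\int_{(\delta-a)\vee 0}^t f(x^{(0)}_s)\,ds = F(t)-F((\delta-a)\vee 0)$ and $\int_{s+\delta}^t f(x^{(0)}_u)\,du = F(t)-F(s+\delta)$, the Proposition's formula becomes
$$ p_t = e^{-F(t)}\left[\, c_0 + \int_0^{t-\delta} g_s\, e^{F(s+\delta)}\,ds \,\right], \qquad c_0 := \int_{\R_+} e^{F((\delta-a)\vee 0)}\,\pi_0(da), \quad g_s := f(x^{(0)}_s)\,p_s, $$
where $c_0\le e^{F(\delta)}<\infty$ (as $f\ge 0$ and $\pi_0$ is a probability measure) and $g$ is bounded (since $0\le p_s\le1$) and measurable.

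From this factorization continuity is immediate: $t\mapsto e^{-F(t)}$ is continuous, and $t\mapsto \int_0^{t-\delta} g_s e^{F(s+\delta)}\,ds$ is continuous because its integrand is bounded and measurable; hence $t\mapsto p_t$ is continuous for all $t>\delta$. For the delay equation, fix $t>2\delta$. The prefactor $e^{-F(t)}$ is $C^1$ with derivative $-f(x^{(0)}_t)e^{-F(t)}$, and the bracketed term $t\mapsto c_0 + \int_0^{t-\delta} g_s e^{F(s+\delta)}\,ds$ is absolutely continuous; at any $t$ for which $s\mapsto g_s e^{F(s+\delta)}$ is continuous at the point $s=t-\delta$ its derivative is classically $g_{t-\delta}e^{F(t)}$. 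By the continuity of $p$ (hence of $g$) just established on $]\delta,\infty[$ together with continuity of $x^{(0)}$, this holds for every $t>2\delta$. The product rule then yields
$$ \dot p_t = -f(x^{(0)}_t)\,e^{-F(t)}\left[\, c_0 + \int_0^{t-\delta} g_s e^{F(s+\delta)}\,ds\,\right] + e^{-F(t)}\,g_{t-\delta}\,e^{F(t)} = -f(x^{(0)}_t)\,p_t + f(x^{(0)}_{t-\delta})\,p_{t-\delta}, $$
for all $t>2\delta$. The right-hand side is continuous on $]2\delta,\infty[$ (using continuity of $t\mapsto p_t$, of $t\mapsto p_{t-\delta}$, and of $x^{(0)}$), so $p\in C^1(]2\delta,\infty[,\R)$, which is the claim.

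The one point requiring care is that a priori $p$ is merely a bounded measurable function — it is the probability $P(A_t\geq\delta)$ — so one cannot differentiate the integral term of the Proposition's formula by pretending that $p$ is smooth on the whole range $[0,t-\delta]$ of integration, part of which lies in $[0,\delta]$ where no regularity is available. Factoring out $e^{-F(t)}$ circumvents this: all the $t$-dependence then sits either in $e^{-F(t)}$ or in the upper limit of an integral of a bounded measurable function, and the value of the derivative at $t$ only calls for continuity of $g$ at the single point $t-\delta$, which the continuity statement supplies precisely once $t>2\delta$. Everything else — the chain rule, the fundamental theorem of calculus for absolutely continuous functions, dominated convergence for the continuity statement, and the bound $c_0<\infty$ — is routine.
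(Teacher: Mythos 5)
Your argument is correct and is exactly the route the paper intends: the corollary is stated as a direct consequence of the representation of $p_t$ in the preceding proposition (the paper in fact omits the details entirely, remarking only that "the above representation shows that $p_t$ is eventually smooth"). Your factorization $p_t = e^{-F(t)}\bigl[c_0 + \int_0^{t-\delta} g_s e^{F(s+\delta)}\,ds\bigr]$, valid for $t>\delta$ because the indicator $\ql\{A_0\geq \delta-t\}$ is then identically one, cleanly handles the only delicate point — that $p$ has no regularity on $[0,\delta]$ — and the differentiation for $t>2\delta$ is carried out correctly.
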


If the starting law is smooth, we can say more.
\begin{koro}
If $ \mu_0 ( da) = \mu_0 (a) da , $ with $\mu_0 \in C ( \R, \R_+), $ then for all $ t < \delta , $
$$ p_t = \int_{ \delta- t}^\infty \mu_0 (a)   e^{- \int_{(\delta - a) \vee 0} ^t  f(x_s^{(0)} ) ds  }  da $$
is continuous and thus, taking into account \eqref{eq:cascade}, $ C^1 ( [0, \delta [, \R ) .$ In particular, on $ [0, \delta [, $ $t \mapsto p_t$ solves
$$ dp_t = \mu_0 (\delta - t ) - f ( x_t^{(0)} ) p_t dt .$$
By induction, this implies that $ t \mapsto p_t $ is continuous on $ \R_+ $ and $C^1 $ on $ ] \delta , \infty [, $ with
$$ d p_t = - f ( x_t^{(0)} ) p_t dt + f(x_{t- \delta }^{(0)}) p_{t- \delta } dt , \mbox{ for all } t > \delta .$$
Moreover, at $t = \delta, $ writing $ \dot p := \frac{d}{dt} p_t, $
$$ \dot p_{\delta - } = \mu_0 ( 0 ) - f ( x_\delta^{(0)} ) p_\delta  \;
\mbox { and }  \; \dot p_{\delta + } = - f ( x_\delta^{(0)} ) p_\delta + f ( x_0^{(0)} ) p_0 .$$
\end{koro}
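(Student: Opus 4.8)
The plan is to start from the representation of $p_t=P(A_t\geq\delta)$ furnished by the proposition preceding this corollary,
\[
p_t=\int_{(\delta-t)\vee 0}^{\infty}\pi_0(a)\,e^{-\int_{(\delta-a)\vee 0}^{t}f(x_s^{(0)})\,ds}\,da+\int_0^{t-\delta}f(x_s^{(0)})\,p_s\,e^{-\int_{s+\delta}^{t}f(x_u^{(0)})\,du}\,ds ,
\]
and to read off continuity and a differential equation for $p$ by differentiating interval by interval over $[m\delta,(m+1)\delta[$, $m\geq 0$. Three facts will be used throughout. First, $x^{(0)}$ is continuous on $\R_+$: indeed $\phi_t=\int_0^t\bar\lambda_s\,ds$ is continuous, its integrand being locally bounded by \eqref{psias2} and Lemma \ref{odelm} ($\bar\lambda_s\leq L(1+|x_s^{(0)}|)$), and then the cascade \eqref{eq:cascade} is a linear equation driven by $d\phi$, so $x^{(n)},\dots,x^{(0)}$ are continuous; consequently $s\mapsto f(x_s^{(0)})$ is continuous, $f$ being Lipschitz. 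Second, $\int_{\R}\pi_0=1$, so on any $[0,T]$ the integrands appearing in $p_t$ are dominated by the integrable functions $\pi_0(a)$, respectively $\sup_{[0,T]}|f\circ x^{(0)}|\cdot\sup_{[0,T]}p$, which legitimises the dominated-convergence and differentiation-under-the-integral steps below.

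For $t<\delta$ the delay integral is empty and, writing $\pi_0(da)=\pi_0(a)\,da$, $p_t$ reduces to the formula of the statement; splitting the $a$-integral at $\delta$ it reads $p_t=\int_{\delta-t}^{\delta}\pi_0(a)e^{-\int_{\delta-a}^{t}f(x_s^{(0)})ds}da+e^{-\int_0^{t}f(x_s^{(0)})ds}\int_\delta^\infty\pi_0(a)da$, and both summands are visibly continuous in $t$ on $[0,\delta[$. Differentiating, the moving lower limit $a=\delta-t$ of the first summand contributes the Leibniz term $\pi_0(\delta-t)$ — the inner exponent $\int_{\delta-(\delta-t)}^{t}f(x_s^{(0)})ds=\int_t^t$ vanishing there — while the $t$-dependence of the two exponentials contributes $-f(x_t^{(0)})$ times $p_t$. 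Hence $\dot p_t=\pi_0(\delta-t)-f(x_t^{(0)})\,p_t$ on $[0,\delta[$, and since every term on the right is continuous, $p\in C^1([0,\delta[,\R)$.

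The remaining claims follow by induction over $m\geq 1$: suppose $p$ is continuous on $[0,m\delta]$ and $C^1$ on $\,](m-1)\delta,m\delta[\,$. (For $m=1$ this is the previous paragraph together with the observation that at $t=\delta$ the delay integral vanishes and the first integral is continuous, so the value of $p$ at $\delta$ from the full representation equals the left limit found above; hence $p$ is continuous on $[0,\delta]$.) On $[m\delta,(m+1)\delta[$ we have $(\delta-t)\vee 0=0$, so the first integral is $\int_0^{\infty}\pi_0(a)e^{-\int_{(\delta-a)\vee 0}^{t}f(x_s^{(0)})ds}da$, whose only $t$-dependence sits in the exponent, hence it is $C^1$ with derivative $-f(x_t^{(0)})$ times itself; and in the delay integral $s\mapsto p_s$ is continuous on $[0,t-\delta]\subseteq[0,m\delta]$ by the induction hypothesis, so that integral is well defined and $C^1$ in $t$, with Leibniz boundary term $f(x_{t-\delta}^{(0)})p_{t-\delta}$ (again the inner exponent degenerates to $\int_t^t=0$ at $s=t-\delta$) and interior term $-f(x_t^{(0)})$ times the integral itself. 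Adding the two pieces,
\[
\dot p_t=-f(x_t^{(0)})\,p_t+f(x_{t-\delta}^{(0)})\,p_{t-\delta},\qquad m\delta<t<(m+1)\delta .
\]
Since the full representation is one and the same formula on $[\delta,\infty[$, $p$ is continuous on $[m\delta,(m+1)\delta]$ and matches its value at $m\delta$ from below, giving continuity on $[0,(m+1)\delta]$; moreover for $m\geq 2$ the displayed formula for $\dot p$ holds verbatim on $\,](m-1)\delta,m\delta[\,$ and on $\,]m\delta,(m+1)\delta[\,$, while $x^{(0)}$, $f$ and $p$ are continuous at $m\delta$, so $\dot p$ extends continuously across $m\delta$ and $p\in C^1(\,](m-1)\delta,(m+1)\delta[\,,\R)$. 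This closes the induction: $p$ is continuous on $\R_+$ and $C^1$ on $\,]\delta,\infty[\,$, and the one-sided derivatives at $t=\delta$ are the $m=1$ instances, $\dot p_{\delta-}=\pi_0(0)-f(x_\delta^{(0)})p_\delta$ from the $t<\delta$ formula and $\dot p_{\delta+}=-f(x_\delta^{(0)})p_\delta+f(x_0^{(0)})p_0$ from the $t\geq\delta$ formula with $t-\delta=0$.

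The only genuine difficulty will be bookkeeping: the variable $t$ enters the representation in three places — the moving lower limit of the first integral, the moving upper limit of the delay integral, and the upper endpoint of every inner exponent — and the argument is correct only if one checks that both Leibniz boundary terms collapse because the associated inner integral degenerates to $\int_t^t$, and, crucially, that the regularity of the delay integral on $[m\delta,(m+1)\delta[$ is invoked solely through the induction hypothesis one step below, i.e.\ via $p_s$ for $s\leq t-\delta<m\delta$. No individual estimate is hard.
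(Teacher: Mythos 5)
Your proof is correct and follows exactly the route the paper intends: the corollary is read off from the renewal-type representation of $p_t$ in the preceding proposition by splitting at $a=\delta$, differentiating under the integral, and tracking the Leibniz boundary terms (which collapse because the inner exponent degenerates to $\int_t^t$), with the delay term handled by induction over the intervals $[m\delta,(m+1)\delta[$. The paper gives no further argument beyond this representation, so your write-up simply supplies the bookkeeping it leaves implicit.
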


Let us now look for possible stationary solutions of \eqref{eq:cascade}.  At equilibrium, we necessarily have that
$$ x^{(0)} \equiv x^{**} $$
for a given value $ x^{**} \in \R . $ It follows that  $  A_t $ is a renewal process with dynamics
\begin{equation}\label{eq:renewal}
 d  A_{t+} = dt -  A_{t} \int_{\R_+} 1_{\{ z \le \psi ( x^{** },  A_{t})\}}  \pi \lp dt,dz\rp .
\end{equation}
$A$ is recurrent in the sense of Harris if it comes back to $0$ infinitely often almost surely. This happens if $ \int_0^\infty \psi ( x^{**} ,  A_{t})  dt = \infty $ almost surely, which is granted by the following condition.

\begin{ass}
For all $x, $ there exists $ r(x) \geq 0 $ such that $ \psi ( x, a) $ is lower bounded for all $ a \geq r( x) .$
\end{ass}

The stationary distribution of  \eqref{eq:renewal} is absolutely continuous with respect to the Lebesgue measure on $ \R_+,$ having the density (see Proposition 21 of \cite{evafou})
$$ g_{x^{** }} (a) = \kappa  e^{ - \int_0^a \psi ( x^{**} , z) dz } $$
on $\R_+,$  where $ \kappa $ is chosen such that $ \int_0^\infty g_{x^{**} } (a) da = 1.$  Recall that
$ \bar \lambda_t = \frac{ d \phi_t}{dt} $
denotes the (expected) jump rate of the limit system at time $t.$ Then at equilibrium, the total jump rate is constant and given by $ \bar \lambda_t =\bar \lambda.$
From \eqref{eq:limitlambda} we get that
$$ \bar \lambda   = \kappa  \int_0^\infty \psi ( x^{**} , a ) e^{ - \int_0^a \psi (x^{**} , z ) dz } da = \kappa  , $$
where we have used the change of variables $ y = \int_0^a \psi ( x^{**} , z) dz, dy = \psi ( x^{**} , a ) da .$

As a consequence,
$$ \bar \lambda = \kappa  $$
implying that at equilibrium, the jump rate of the system is solution of
\begin{equation}\label{eq:fixedpoint}
\bar \lambda^{-1} = \int_0^\infty \exp \left( { -\int_0^a  \psi \left(\frac{b}{\nu^{n+1} }\bar \lambda , z\right) dz}\right) da .
\end{equation}
Here we have used that at equilibrium
$$ x^{**} = x^{(0 )} =  \frac{1}{\nu } x^{(1)} = \ldots = \frac{b}{\nu^{n+1}} \bar \lambda ,$$
which follows from \eqref{eq:cascade}.

\begin{prop}
Suppose that $ x \mapsto \psi ( x, a ) $ is strictly increasing for any fixed $a \geq 0 $ and that $ b < 0 .$ There exists a unique solution $ \lambda^* $ to \eqref{eq:fixedpoint}.
\end{prop}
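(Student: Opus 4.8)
The plan is to reduce \eqref{eq:fixedpoint} to a scalar fixed point equation and then to argue by monotonicity and continuity. Set
\[
F(\lambda) := \int_0^\infty \exp\left( - \int_0^a \psi\left( \frac{b}{\nu^{n+1}} \lambda , z \right) dz \right) da
\]
for $\lambda > 0$ and put $G(\lambda) := \lambda F(\lambda)$, so that \eqref{eq:fixedpoint} is exactly $G(\lambda) = 1$. First I would check that $F$ takes values in $(0,\infty)$: positivity and measurability are clear, while finiteness follows from the Assumption, since for a fixed $\lambda$ the argument $x = \frac{b}{\nu^{n+1}}\lambda$ is a fixed real number, so $\psi(x,\cdot)$ is bounded away from zero, say by $m(x) > 0$, on $[r(x),\infty)$; hence $\int_0^a \psi(x,z)\,dz \geq m(x)(a - r(x))$ for $a \geq r(x)$, the integrand decays exponentially, and $F(\lambda) \leq r(x) + m(x)^{-1} < \infty$.

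The uniqueness part is then immediate from strict monotonicity of $G$. Since $b < 0$, the map $\lambda \mapsto \frac{b}{\nu^{n+1}}\lambda$ is strictly decreasing, and since $x \mapsto \psi(x,a)$ is strictly increasing by hypothesis, for each fixed $z$ the map $\lambda \mapsto \psi(\frac{b}{\nu^{n+1}}\lambda, z)$ is strictly decreasing; therefore, for every $a > 0$ the integrand $\exp(-\int_0^a \psi(\frac{b}{\nu^{n+1}}\lambda,z)\,dz)$ is strictly increasing in $\lambda$, and so $F$ is strictly increasing on $(0,\infty)$. Being a product of the two strictly positive, strictly increasing functions $\lambda \mapsto \lambda$ and $\lambda \mapsto F(\lambda)$, the function $G$ is strictly increasing, so $G(\lambda) = 1$ has at most one solution.

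For existence I would first record that $G$ is continuous. Fixing $\lambda_0 > 0$ and working on $[\lambda_0/2, 2\lambda_0]$, the argument $x = \frac{b}{\nu^{n+1}}\lambda$ ranges over a compact interval with left endpoint $x_{\min}$; by monotonicity of $\psi$ in $x$ we have $\psi(x,z) \geq \psi(x_{\min},z)$ throughout, and the Assumption applied at $x_{\min}$ provides $m > 0$, $r \geq 0$ with $\int_0^a \psi(x_{\min},z)\,dz \geq m(a-r)$ for $a \geq r$; hence $a \mapsto \min(1, e^{-m(a-r)_+})$ is an integrable dominating function for the integrands in $F$, uniformly over that interval. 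Since $\psi(\cdot,z)$ is $L$-Lipschitz by \eqref{psias1}, the integrands converge pointwise along $\lambda_n \to \lambda_0$, so dominated convergence gives $F(\lambda_n) \to F(\lambda_0)$. It remains to compute the boundary behaviour: as $\lambda \to 0^+$, monotonicity gives $F(\lambda) \leq F(1)$, hence $G(\lambda) = \lambda F(\lambda) \to 0$; as $\lambda \to \infty$, $F(\lambda) \geq F(1) > 0$, hence $G(\lambda) \geq \lambda F(1) \to \infty$. The intermediate value theorem produces $\lambda^* \in (0,\infty)$ with $G(\lambda^*) = 1$, and strict monotonicity makes it the unique such point.

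I expect the delicate step to be the finiteness and, above all, the uniform domination needed for continuity, because the lower bound supplied by the Assumption depends on $x$, hence on $\lambda$. The point that resolves it is monotonicity of $\psi$ in $x$: on any bounded $\lambda$-interval the least value $x_{\min}$ of the argument dominates the integrands for all other $\lambda$ in that interval, giving a single integrable envelope. In the hard refractory case $\psi(x,a) = f(x) \ql\{a \geq \delta\}$ (with $f > 0$ by the Assumption) everything becomes explicit, since then $F(\lambda) = \delta + f(\frac{b}{\nu^{n+1}}\lambda)^{-1}$, and the monotonicity, continuity and limit computations can be read off directly from the properties of $f$.
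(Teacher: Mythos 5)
Your proof is correct, and it is the natural monotonicity argument that the paper itself relies on (the proposition is stated without an explicit proof, but the surrounding discussion — a strictly decreasing left-hand side $1/\bar\lambda$ against a right-hand side that is increasing in $\bar\lambda$ when $b<0$ — is exactly your observation that $G(\lambda)=\lambda F(\lambda)$ is strictly increasing from $0$ to $\infty$). Your additional care with finiteness of $F$ via the Assumption and with the uniform integrable envelope needed for continuity fills in details the paper leaves implicit.
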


Recall that we suppose that $ \psi (x, a ) = f(x) \ql { \{a \geq
  \delta \}},$ for some $\delta > 0 .$ We calculate the right hand side of \eqref{eq:fixedpoint} and obtain the fixed point equation
\begin{equation}\label{eq:fp}
   \int_0^\infty \exp \left( { -\int_0^a  \psi \left(\frac{b}{\nu^{n+1} }\bar \lambda , z\right) dz}\right) da  = \delta + (f( \frac{b}{\nu^{n+1} }\bar \lambda ))^{-1}= \bar \lambda^{-1}.
\end{equation}

More generally, for any Hawkes process with mean-field interactions,
rate function $ \psi (x, a ) $ given by $ \psi ( x, a) = f(x) \ql { \{a \geq \delta \}}$ and  general weight function $ h \in {\mathcal L}^1 ( \R_+) , $ we obtain the fixed point equation
\begin{equation}\label{eq:fixedpoint2}
\bar \lambda^{-1} =  \delta +(f( \bar \lambda  \int_0^\infty h (t) dt ))^{-1}
\end{equation}
for the limit intensity. This limit intensity depends on the length of the refractory period, we write $ \bar \lambda = \bar \lambda ( \delta ) $ to indicate this dependence.

It is then natural to study the influence of the length of the refractory period $ \delta $ on the limit intensity. If $ f $ is increasing and the system inhibitory, that is, $ \int_0^\infty h (t) dt < 0 , $ then clearly
$$ \delta \mapsto \bar \lambda ( \delta ) $$
is decreasing: increasing the length of the refractory period ``calms down the system''.

In the excitatory case when $ \int_0^\infty h(t) dt > 0 ,$ to ensure that the fixed point equation \eqref{eq:fixedpoint} has a solution, suppose that $f$ is strictly increasing and bounded from above and below, away from zero.  Then the function
$$  \bar \lambda \mapsto ( f( \bar \lambda  \int_0^\infty h (t) dt ))^{-1}$$
is a strictly decreasing function mapping $ [0, \infty ] $ onto $ [ \frac{1}{ f (0) } , \frac{1}{f( \infty ) }].$ Therefore, there is exactly one fixed point solution of \eqref{eq:fixedpoint},  and $ \delta \mapsto \bar \lambda ( \delta ) $ is again decreasing.

\section{Appendix}

Here we prove Lemma \ref{odelm}, \Cref{theo:prop} and Lemma 3.4. Then we collect some
useful results about counting processes.

\subsection{Proofs}

\begin{proof}[Proof of Lemma \ref{odelm}]
	It suffices to show that a unique solution exists on $\lf 0,T
        \rf$, for arbitrary $T\geq 0$. In the following proof,
        $C:=C_{T}$ will denote a dynamic constant depending on the
        parameters described in the lemma. It need not represent the
        same constant from line to line, nor from equation to
        equation.

	First we prove existence of a solution to \eqref{diff} with
        $\phi_{t}=\alpha_{t}$ using Picard-iteration.  For $n\in \N$
        define $\lp \phi^{n},x^{n},A^{n}\rp = \lp
        \phi^{n,k},x^{n,k},A^{n,k} \rp_{k\leq \ccK}$ as follows. Initialize
        the system for  $n=0$ by putting $\lp \phi^{0,k},x^{0,k},A^{0,k}
        \rp \equiv   \lp 0,0,A_{0} \rp $. For general  $n\in\N , n
        \geq 1 , $ the triple $\lp \phi^{n},x^{n},A^{n}\rp $ is
        defined as  the solution to \eqref{diff} with
        $\alpha=\phi^{n-1}$ . Inductively it is seen that these
        processes are well-defined. Recall that $h=\sum_{k,l=1}^{\ccK}
        \lv h_{kl} \rv .$  Using \eqref{psias2} we bound $x^{n}$ by
	$$
	\lv x^{n}_{t}\rv \leq \sum_{l=1}^{\ccK} \int^{t}_{0} \lv h\lp t-s \rp \rv d \phi^{n-1,l}_{s} + | \beta_t| \leq  C\int_{0}^{t} \lv h\lp s \rp \rv ds + C\int^{t}_{0} \lv h\lp t-s \rp \rv \lv x^{n-1}_{s}\rv ds + | \beta_t| .
	$$
	It follows from \Cref{convobound} in the Appendix that there
        exists a constant $C>0$ which bounds all $\lV
        x^{n}\rV_{T},n\in \N$. Using this upper bound on $x^{n}, $ we
        also bound the difference of two consecutive solutions. Define
		\begin{align*}
		\delta^{n}_{t}=\sum_{k=1}^{\ccK}\int_{0}^{t} {\mathbb E} \lv\psi^{k}\lp x^{n,k}_{s},A^{n,k}_{s} \rp-\psi^{k}\lp x^{n-1,k}_{s},A_{s}^{n-1,k} \rp\rv ds .
		\end{align*}
The Lipschitz property of $\psi$ and the bound on $x^{n}$ yield
$$
\delta^{n+1}_{t}\leq C\int_{0}^{t} \left( \lv x^{n+1}_{s}-x^{n}_{s}\rv+\sum_{k=1}^{\ccK}P\lp \lV A^{n+1,k}-A^{n,k}\rV_{s}>0 \rp \right) ds.
$$
For the probability term, we note that a necessity for the age processes to differ, is that one of their corresponding intensities catches a $\pi$-singularity which the other one does not catch. This leads to the inequality
\begin{align}
&\sum_{k=1}^{\ccK}P\lp \lV A^{n+1, k}-A^{n,k }\rV_{t}>0 \rp
\\\leq& \sum_{k=1}^{\ccK} P\lp \int_{0}^{t}\int_{0}^{\infty}\lv\ql\lb z\leq \psi^{k}\lp x^{n+1,k}_{s},A^{n+1,k}_{s} \rp\rb-\ql \lb z\leq \psi^{k}\lp x^{n,k}_{s},A^{n,k}_{s}\rp  \rb\rv \pi^{k} \lp ds,dz\rp \geq 1 \rp\nonumber \\
\leq& \, \delta^{n+1}_{t},
\end{align}
where the latter inequality follows by the Markov inequality. By Gronwall's inequality we obtain
\begin{align}
\delta^{n+1}_{t}\leq C \int_{0}^{t} \lv x^{n+1}_{s}-x^{n}_{s}\rv\;ds.\label{eq1}
\end{align}
Moreover, Lemma 22 of \cite{dfh} gives
\begin{align}
\int_{0}^{t}\lv x^{n+1}_{s}-x^{n}_{s}\rv ds \leq \sum_{l=1}^{\ccK}\int_{0}^{t}\int_{0}^{s}  h ( s-u)    \lv d\lp \phi^{n,l}_{u}-\phi^{n-1,l}_{u}\rp \rv    ds \leq \int_{0}^{t} h( t-s) \delta^{n}_{s} ds. \label{eq2}
\end{align}
It therefore follows from \Cref{convobound} in the Appendix that for all $1\le  k\leq K,$
$$
\sum_{n=1}^{\infty} \sup_{ t \le T } |
\phi^{n+1,k}_{t}-\phi^{n,k}_{t}| \, \leq \, \sum_{n=1}^{\infty}\delta^{n}_{T}<\infty  .$$
Thus, $\phi^{n} $ and therefore also $ x^{n}$ converge
locally-uniformly to some $\phi,x$, respectively. Moreover,
\begin{align*}
P\lp A^{n}_{s\leq T}\neq A^{n+1}_{s\leq T} \; i.o.\rp= P\left(\bigcap_{m\in\N} \bigcup_{n\geq m} \left\{ \lV A^{n}-A^{n+1} \; \rV_{T}>0\right\}\right)\leq \lim_{m\con \infty}\sum_{n\geq m}^{\infty}\delta^{n+1}_{T}=0.
\end{align*}

It follows that almost surely, $A^{n}$ converges to some limit $A$ after finitely many iterations.

We need to show that the limit triple $\lp \phi,x,A \rp$  satisfies \eqref{diff} with $ \phi_t = \alpha_t .$ Recall that $x\mapsto \psi^{k}\lp x,a\rp$ is continuous for fixed $a\in \R_{+}$. Since $A^n$ reaches its limit in finitely many iterations, and $\psi$ is continuous in $x$ for fixed $a, $ it follows that
$\lim_{n\con \infty}\psi^{k}\lp x^{nk}_{s},A^{nk}_{s}\rp$ exists for all $s\leq T, $ almost surely. By dominated convergence and \eqref{psias2},  it follows that ${\mathbb E} \psi^{k}\lp x^{n,k}_{s},A^{n,k}_{s}\rp$ converges as well. Therefore, once again by dominated convergence,
$$ \phi_t^k = \lim_{n \to \infty} \phi_t^{n,k} = \lim_{n \to \infty}  \int_0^t {\mathbb E} \psi^{k}\lp x^{n,k}_{s},A^{n,k}_{s}\rp ds =\int_0^t  {\mathbb E} \psi^{k}\lp x^{k}_{s},A^{k}_{s}\rp ds, $$
that is, $ \phi$ satisfies \eqref{diff}. One shows similarly that
\begin{eqnarray*}
 x_t^k& =& \sum_{l=1}^{\ccK}   \lim_{n \to \infty} \int_0^t h_{kl} (t-s) d \phi_s^{n, l } + \beta_t^k \\
& =& \sum_{l=1}^{\ccK}   \lim_{n \to \infty} \int_0^t h_{kl} (t-s)  {\mathbb E} \psi^l ( x_s^{n l, }, A_s^{n ,l } ) ds  + \beta_t^k\\
& =& \sum_{l=1}^{\ccK}   \int_0^t h_{kl} (t-s)  {\mathbb E} \psi^l ( x_s^{ l }, A_s^{ l } ) ds  + \beta_t^k\\
&=& \sum_{l=1}^{\ccK}   \int_0^t h_{kl} (t-s) d \phi_s^{ l } + \beta_t^k,
\end{eqnarray*}
and $x$ satisfies \eqref{diff} as well. For the age process, notice that the c\`agl\`ad process
\begin{align*}
\ve\lp t\rp=\sum_{k=1}^{\ccK}\int_{0}^{t-}\int_{0}^{\infty} \ql\lb z =  \psi^{k}\lp x^{k}_{s},A^{k}_{s}\rp\rb \pi^{k}\lp ds,dz\rp
\end{align*}
has a compensator which is equal to zero for all $ t \geq 0, $ almost surely, by \Cref{compensator}.  Therefore,  $\ve_t = 0 $ for all $ t \geq 0 $ almost surely. This implies that with probability $1,$ $\ql \lb  z\leq \psi^{k} \lp x^{nk}_{s},A_{s}^{k}\rp \rb$ converges $\pi^{k}-$a.e.\ to $\ql \lb  z\leq \psi^{k} \lp x^{k}_{s},A_{s}^{k}\rp \rb$ for all $k\leq \ccK$. As a consequence,
\begin{align*}
A^{k}_{t}-A^{k}_{0} &=t-\lim_{n\con \infty}\int_{0}^{t-} A^{nk}_{s}\ql\lb z\leq \psi^{k}\lp x^{n,k}_{s},A^{nnk}_{s}\rp\rb \pi^{k}\lp ds,dz\rp
\\&=t-\lim_{n\con \infty}\int_{0}^{t-} A^{k}_{s}\ql\lb z\leq \psi^{k}\lp x^{n,k}_{s},A^{k}_{s}\rp\rb \pi^{k}\lp ds,dz\rp
\\&=  t-\int_{0}^{t-}\int_{0}^{\infty} A^{k}_{s}\ql\lb z\leq \psi^{k}\lp x^{k}_{s},A^{k}_{s}\rp\rb \pi^{k}\lp ds,dz\rp ,
\end{align*}
where we have used dominated convergence. Since $x$ is locally bounded, it follows that $\phi$ is $C^{0}.$

To prove uniqueness, we assume that $\lp \tilde{\phi}, \tilde{x} , \tilde{A} \rp$ also solves \eqref{diff} with
$$\tilde{x}^{k}_{t}=\sum_{l=1}^{\ccK}p_{l}\int_{0}^{t}h_{kl}\lp t-s \rp d\tilde{\phi} ^{l}_{s} + \beta_t^k .$$
Define
\begin{align*}
\delta_{t} = \sum_{k=1}^{\ccK}\int_{0}^{t}{\mathbb E} \lv \psi^{k}\lp x^{k}_{s},A^{k}_{s} \rp-\psi^{k} \lp \tilde{x}^{k}_{s},\tilde{A} ^{k}_{s}\rp\rv ds.
\end{align*}
Considerations analogous to the ones given in the proof of existence, gives that
\begin{align*}
\lv x_{t}-\tilde{x}_{t}\rv\leq  \delta_{t}\leq C\int_{0}^{t} h\lp t-s \rp \delta_{s}ds .
\end{align*}
From Gronwall's inequality it follows that $\delta \equiv 0$, and therefore also that $x = \tilde{x} $ on $\lf 0,T\rf.$ From \eqref{diff} it follows immediately  $\phi = \tilde{\phi} $ and $A = \tilde{A} $ almost surely.
\end{proof}

\begin{proof}[Proof  of Lemma \ref{weightapproximation}] Throughout this proof, $C$ is a dynamic constant with dependencies as declared in the theorem. Define the functions $h = \sum_{k,l } \lv h_{kl }\rv , \tilde{h} = \sum_{k,l } \lv \tilde{h}_{kl }\rv $.
 First we prove that the memory processes ${\mathbb E} |X_{t}|,{\mathbb E} |\tilde{X}_{t}| $  are bounded on $[0,T]$ by a suitable constant  $C $. Note that
\begin{eqnarray*}
{\mathbb E} \lv X_t \rv &\leq& \sum_{l=1}^{\ccK}\lp \int_{0}^{t} h\lp t-s\rp{\mathbb E} \psi^{l}\lp X^{l}_{s},A^{l}_{s}\rp ds+{\mathbb E} |R^{l}_{t}| \rp \\
&\leq& C\int_{0}^{t} h\lp t-s\rp {\mathbb E} \lv X_{s}\rv
     ds+C\int_{0}^{t}h\lp s\rp ds +{\mathbb E} |R_{t} | .
\end{eqnarray*}
Since $ {\mathcal E}$ is uniformly integrable, the direct sum  $\lb \sum_{k,l=1 }^{\mathcal{K}}\lv f_{kl}\rv,f_{kl }\in \mathcal{E} \rb$ is uniformly integrable as well. Thus, there exists $b>0$ satisfying
\begin{align}
 \int_{0}^{T} \sum_{k,l = 1 }^{\mathcal{K}} \lv f_{kl} \rv \lp s\rp \ql\lb \sum_{k,l =1 }^{\mathcal{K}}\lv f_{kl}\rv  \lp s\rp>b\rb ds<2^{-1}
\end{align}
for all choices of $\lp f_{kl } \rp\subset {\mathcal E}$. It follows from \Cref{convobound} that $ {\mathbb E} \| X\|_T \le C $ for a suitable  $C$. The same argument shows that also $ {\mathbb E} \| \tilde{X}\|_T \le C .$
Define the total variation measure $\delta_t = \sum_{k=1}^{\ccK} {\mathbb E} \lv d\lp Z^{k1}_{t}-\tilde{Z}^{k1}_{t} \rp	\rv$. We may write
	\begin{eqnarray*}
	\delta_{t}&\leq &{\mathbb E}\sum_{k=1}^{\ccK}\int_{0}^{t} \lv \psi^{k}\lp \tilde{X}^{k}_{s},\tilde{A}^{k1}_{s}\rp-\psi^{k}\lp X^{k}_{s},A^{k1}_{s}\rp \rv ds\nonumber\\
	&\leq &C\sum_{k=1}^{\ccK} \int_{0}^{t}  {\mathbb E}\lv \tilde{X}^{k}_{s}-X^{k}_{s} \rv+P\lp \lV \tilde{A}^{k1}-A^{k1}\rV_{s}>0\rp ds\label{eqbom}.
	\end{eqnarray*}
	As in the proof of \Cref{odelm} we apply Markov's inequality to achieve
	\begin{align*}
	\sum_{k=1}^{\ccK}P\lp \lV \tilde{A}^{k1}-A^{k1}\rV_{t}> 0\rp\leq \delta^{n}_{t}.
	\end{align*}
	We insert this inequality into \eqref{eqbom} to get
	\begin{align}
	\delta_{t}\leq  C\lp \int_{0}^{t} {\mathbb E}\lv \tilde{X}_{s}-X_{s} \rv ds+\int_{0}^{t}\delta_{s} ds \rp.\label{normC2}
	\end{align}
	We now wish to bound the difference of the memory processes. First, define $\gamma = \sum_{k,l =1}^{\mathcal{K}}\lv h_{kl }-\tilde{h}_{kl }\rv$, and note that for any fixed  $k,l,j$ we have for any  $s\geq 0 $

\begin{eqnarray}
	&&\lv \int_{0 }^{s- }h_{kl }\lp s-u \rp dZ^{lj }_{u} - \int_{0 }^{s- }\tilde{h}_{kl }\lp s-u \rp d\tilde{Z}^{lj }_{u}\rv\nonumber\\
&&\leq \int_{0 }^{s- }\lv h_{kl }-\tilde{h}_{kl }\rv\lp s-u \rp dZ^{lj }_{u} + \int_{0 }^{s- } \lv\tilde{h}_{kl }\rv \lp s-u \rp  \lv d\lp Z^{lj }_{u}-\tilde{Z}^{lj }_{u} \rp\rv\nonumber
\\&&\leq \int_{0 }^{s- }\gamma\lp s-u \rp d\sum_{l=1}^{\mathcal{K} } Z^{lj }_{u} + \int_{0 }^{s- } \tilde{h} \lp s-u \rp  \lv d \sum_{l=1}^{\mathcal{K}}\lp Z^{lj }_{u}-\tilde{Z}^{lj }_{u} \rp\rv.\label{mixingintegral}
\end{eqnarray}
We take expectation and apply Lemma 22 of \cite{dfh} to obtain
\begin{multline*}
\mathbb{E}\int_{0 }^{t}\lv \int_{0 }^{s- }h_{kl }\lp s-u \rp dZ^{lj }_{u} - \int_{0 }^{s- }\tilde{h}_{kl }\lp s-u \rp d\tilde{Z}^{lj }_{u}\rv ds\leq \\
\int_{0 }^{t }\gamma\lp t-s \rp L\lp 1+\mathbb{E}\lV X_{T }\rV \rp ds + \int_{0 }^{t } \tilde{h} \lp t-s \rp \delta_{s } ds.
\end{multline*}
Note that this expression does not depend on  $k,l$ nor  $j$. Thus we get
	\begin{eqnarray}
  &&\sum_{k=1}^{\ccK}\int_{0}^{t} {\mathbb E}\lv \tilde{X}^{k}_{s}-X^{k}_{s} \rv ds \nonumber \\
  &&\leq\sum_{k=1}^{\ccK}\int_{0 }^{t }N^{-1 }\sum_{l=1}^{\ccK}\sum_{j = 1 }^{N_{l }}{\mathbb E}\lv \int_{0 }^{s- }h_{kl }\lp s-u \rp dZ^{lj }_{u} - \int_{0 }^{s-}\tilde{h}_{kl }\lp s-u \rp d\tilde{Z}^{lj }_{u}\rv ds \nonumber\\
	&&\leq C\lp \int_{0 }^{t }  \gamma\lp s \rp ds  + \int_{0}^{t} \lv \tilde{h}\lp t-s \rp\rv \delta_{s} ds \rp.\label{h-inequality2}
	\end{eqnarray}
	Inserting inequality \eqref{h-inequality2} into \eqref{normC2}, we obtain
$$
	\delta_{t}\leq  C\lp \int_{0}^{t}\gamma\lp s \rp ds  + \int_{0}^{t} \lp \tilde{h}\lv \lp t-s \rp\rv +1\rp \delta_{s} ds\rp.
$$
  The proof will be complete, after repeating the argument for bounded  ${\mathbb E} \lv X\rv,$ but with  $\delta $ in place of ${\mathbb E} \lv X\rv $.

\end{proof}

\begin{proof}[Proof of \Cref{theo:prop}]
	Let $( \tilde{Z}^{N},\tilde{X}^{N},\tilde{A}^{N})$ be the $N-$dimensional age dependent Hawkes process induced by the same parameters as $(Z^N, X^N, A^N),$ except the weight functions $h_{kl}$ instead of $h^N_{kl}.$

	Fix $ T > 0 $ and consider $t \in [0, T ].$ We have
	\begin{align*}
	\sum_{k=1}^{\ccK}\lv d\lp Z^{Nk1}_{t}-Z^{k1}_{t} \rp	\rv\leq \sum_{k=1}^{\ccK}{\mathbb E} \lv d\lp Z^{Nk1}_{t}-\tilde{Z}^{Nk1}_{t} \rp	\rv+\sum_{k=1}^{\ccK}{\mathbb E} \lv d\lp \tilde{Z}^{Nk1}_{t}-Z^{k1}_{t} \rp	\rv := \tilde{\delta}_t^{Nk}+\delta_t^{Nk}.
	\end{align*}
	The first term converges by \Cref{weightapproximation}, and so it remains to prove convergence of $\delta^{N}_t.$ This part of the proof follows closely the proof given by Chevallier in \cite{chevallier}, but we include it here for completeness. Let $C$ be a dynamic constant depending on $p_{k },L,T,\ccK,\lV r\rV_{T}$ and $\lp h_{kl}\rp$. We use the symbol   $\ve\lp N\rp$ for any function depending on the same parameters as  $C $, and $ N $ such that $\ve \lp N \rp \conx{N\con \infty}0 $.
	Recall that $\lV x\rV_{T}$ is bounded by $C$ sufficiently large by \Cref{odelm}.

	As in the proof of \Cref{odelm} we obtain
	\begin{align}
	\delta^{N}_{t}\leq  C\lp \int_{0}^{t} {\mathbb E}\lv \tilde{X}^{N}_{s}-x_{s} \rv ds+\int_{0}^{t}\delta^{N}_{s} ds \rp.\label{normC}
	\end{align}
	This inequality prepares for an application of Gronwall's inequality, but first we bound $\int_0^t {\mathbb E}\lv \tilde{X}_{s}-x_{s}\rv ds $ using $\delta^{N}_{t}$ as well. Indeed, set $\Lambda^{kj}_{t}:=\int_{0}^{t-} \psi\lp x^{k}_{s},A^{kj}_{s}\rp ds$, which is the compensator of $Z^{kj}$. We write $p_{k}=N_{k}/N+\ve\lp N\rp$  and obtain
	$$
	x^{k}_{t}=N^{-1}\sum_{l=1}^{\ccK}\sum_{j=1}^{N_{l}}\int_{0}^{t-}h_{kl}\lp t-s\rp d\phi^{l}_{s}+r^{k}_{t}+\ve\lp N\rp\sum_{l=1}^{\ccK}\int_{0}^{t} h_{kl}\lp t-s\rp d\phi^{l}_{s}.
	$$
	Since $d \phi^{l}_{s} ={\mathbb E} \psi\lp
	x^{l}_{s},A_{s}^{lj}\rp ds$ and ${\mathbb E} \psi\lp
	x^{l}_{s},A_{s}^{lj}\rp$ are locally bounded, the entire right term  may be replaced by an $\ve$-function.  For fixed $k\leq \ccK ,$ we apply the triangle inequality
	\begin{eqnarray}
	&&\int_{0}^{t}\left( {\mathbb E}\lv \tilde{X}^{Nk}_{s}-x^{k}_{s} \rv-{\mathbb E}\lv  R^{Nk}_{s}-r^{k}_{s}\rv \right) ds\nonumber\leq \ve\lp N\rp\\
	&&+ \int_{0}^{t} N^{-1}\sum_{l=1}^{\ccK}{\mathbb E}\lv \sum_{j=1}^{N_{l}}\int_{0}^{s-}h_{kl}\lp s-u \rp d\lp \phi^{l}_{u}-\Lambda^{lj}_{u}\rp \rv ds \label{b1}\\
	\;\;\;\;&& + \int_{0}^{t}N^{-1}\sum_{l=1}^{\ccK}{\mathbb E}\lv \sum_{j=1}^{N_{l}}\int_{0}^{s-}h_{kl}\lp s-u \rp d\lp \Lambda^{lj}_{u}-Z^{lj}_{u} \rp \rv ds \label{b2}\\
	\;\; \;\;&& +\int_{0}^{t} N^{-1}\sum_{l=1}^{\ccK} {\mathbb E}\lv \sum_{j=1}^{N_{l}}\int_{0}^{s-}h_{kl}\lp s-u \rp d\lp Z^{lj}_{u}- \tilde{Z}^{Nlj}_{u} \rp  \rv ds\\
	&&:=  \ve\lp N\rp+B^{1k}_{t}+B^{2k}_{t}+B^{3k}_{t}.\nonumber
	\end{eqnarray}
	We now proceed to bound $B^{i}:=\sum_{k=1}^{\ccK}B^{ik}$, $i\leq 3$. Define $h=\sum_{k,l=1}^{\ccK}\lv h_{kl}\rv$. Rewrite  $\phi$ and $\Lambda$ in terms of their densities,  and thereby obtain a bound for the inner-most sum in \eqref{b1} for $s\in \lf 0,t \rf,l\leq \ccK, $ which is given by
	$$
	{\mathbb E}\int_{0}^{s} \sum_{j=1}^{N_{l}}   h\lp s-u \rp  |d ( \phi^{l}_{u}-\Lambda^{lj}_{u})  |
	\leq \int_{0}^{s}  h\lp s-u \rp  {\mathbb E} \sum_{j=1}^{N_{l}}\lv \psi^{l}\lp x^{l}_{u},A^{lj}_{u} \rp-{\mathbb E}\psi^{l}\lp x^{l}_{u},A^{lj}_{u} \rp\rv du.
	$$
	Notice that the sum consists of i.i.d.\ terms, so we may apply Cauchy-Schwarz to bound it by $\sqrt{N_{l}\, \text{Var} ( \psi (  x^{l}_{u},A^{lj}_{u})  ) }$, which is bounded for $u\in \lf 0,T \rf$ by $\sqrt{N_{l}} C\lp 1+\lV r^{l}\rV_{T} \rp  $ using \eqref{psias2}. Insert this into \eqref{b1} to see that
	$$
	B_t^{1}=\sum_{k=1}^{\ccK}B^{1k}_{t} \leq\ve\lp N\rp.
	$$
	For $B_t^{2}$, recall that  $(Z^{lj}-\Lambda^{lj})_j $ are i.i.d.\ for fixed $l.$ By Cauchy-Schwarz, we obtain a bound for the inner-most sum of \eqref{b2}
	\begin{equation}
	N_{l}^{1/2}\sqrt{\text{Var}\int_{0}^{s}h_{kl}\lp s-u \rp d\lp Z^{l1}_{u}-\Lambda^{l1}_{u}\rp }\; . \label{B2}
	\end{equation}
	To treat the process inside the root, fix $s\geq 0,l\leq \ccK$ and consider the process
	$$
	I:r\mapsto \int_{0}^{r\wedge s}h_{kl}\lp s-u \rp d\lp Z^{l1}_{u}-\Lambda^{l1}_{u}\rp.
	$$
	Then $I$ is a martingale, and
	$$
	\text{Var }I_{s}={\mathbb E}[I]_{s}={\mathbb E}\int_{0}^{s} h_{kl}^2\lp s-u\rp d\Lambda^{l1}_{u}=\int_{0}^{s} h_{kl}^2\lp s-u\rp {\mathbb E} \psi^{l} \lp x^{l}_{u},A^{l1}_{u} \rp du.
	$$
	Since $h_{kl}\in {\mathcal L}^{2}_{loc}$ it follows that $\text{Var} \;I_{s}$ is bounded on $s\in [0,T], $ and so
	$$
	B^{2}_{t}\leq \ve\lp N\rp
	$$
	for all $ t \le T. $
	For $B^{3}$ the triangle inequality, and Lemma 22 \cite{dfh} gives
	\begin{align*}
	B^{3}_{t}\leq C\int_{0}^t  h\lp t-s \rp   \delta_{s}^{N} ds .
	\end{align*}
	We plug the bounds for  $B_{1},B_{2}$ and  $B_{3}$ into \eqref{normC} to obtain
	\begin{align*}
	\delta^{N}_{t}\leq C\lp \int_{0}^{t}  h\lp t-s \rp \delta^{N}_{s}+ \ve\lp N\rp+\sum_{k=1}^{\ccK}{\mathbb E} \lv R^{Nk}_{s}-r^{k}_{s}\rv ds \rp.
	\end{align*}
	Applying \Cref{convobound} in the Appendix yields
	\begin{align}
	\delta^{N}_{t}\leq  \ve\lp N\rp+C\int_{0}^T {\mathbb E} \lv R_{s}^N-r_{s}\rv ds = \ve\lp N\rp
	\end{align}
	for all $t\leq T$, which implies the desired result.

\end{proof}

\subsection{Results about counting processes}

\subsubsection{Measure theory on $\lp M_{E},{\mathcal M}_E \rp$ }
This section provides a brief overview of measure theory on the measurable space $\lp M_{E},{\mathcal M}_E \rp$ of bounded measures defined on a Polish space $E.$ We refer to  A.2.6 in \cite{DALEY} for more details.
Let $d$ be a distance so that $\lp E,d\rp$ is complete and separable. A measure $\nu$ on $E$ is said to be \textit{boundedly finite} if $\text{diam}\lp A\rp<\infty $ implies that $  \nu ( A) <\infty$ for $A\in {\mathcal B}\lp E\rp$. Let $M_{E}$ be the space of all boundedly-finite measures on $(E, {\mathcal B} (E) )$. This space may now be equipped with the \textit{weak-hat} metric $\hat{d}$, making $( M_{E},\hat{d})$  a complete and separable space itself. The Borel-algebra ${\mathcal M}_{E}$ is easily characterized by the projections  $\Pi_{A}:M_{E}\ni  \nu \mapsto \nu\lp A\rp$ in the sense that
\begin{align}
{\mathcal M}_{E} =\sigma\lp \Pi_{A},   \;  A\in {\mathbb D} \rp , \label{generate}
\end{align}
where ${\mathbb D}\subset {\mathcal B}\lp D\rp$ is a semi-ring of bounded sets. A random variable taking values in the space of measures is called a \textit{random measure}. A particular interesting example is when $E=\R \times \R_+ $ as considered in this paper. A Poisson random measure  $\pi:\Omega\rightarrow M_{\R\times\R_{+}}$ on $ \R\times \R_+ $ with Lebesgue intensity measure is a random measure such that for any disjoint $A_{1},\dots ,A_{n}\in {\mathcal B} ( \R ) \times {\mathcal B} ( \R_+) , $
\begin{itemize}
	\item $\pi\lp A_{i}\rp\sim \text{Pois}\lp \int_{A_{i}}dsdz \rp$
	\item $\pi\lp A_{1}\rp\indep\dots \indep \pi\lp A_{n}\rp .$
\end{itemize}
Since the underlying space is on the form $\R \times \R_+, $ the first coordinate of $\pi$ may be thought of as the time coordinate;  and concepts like stationarity and ergodicity transfer naturally to random measures. Define the shift operator as the automorphism on $M_{\R\times \R_+}$ given by
\begin{align*}
\lp \theta^{r}\nu\rp\lp C\rp = \nu \lp \lb \lp t,x\rp\in \R\times \R_+  : \lp t- r,x \rp\in C \rb\rp .
\end{align*}
Then a random measure $\sigma$ on $\R\times \R_+ $ is said to be \textit{stationary} if the distribution is invariant under shift
\begin{align*}
{\mathcal L} ( \sigma) = {\mathcal L} \lp \theta^{r}\sigma\rp
\end{align*}
for all $ r \in \R.$ Stationarity is equivalent to have invariance of the finite dimensional distributions (fidi's)  (Proposition 6.2.III of \cite{DALEY})
\begin{align*}
	P\left(\bigcap_{i=1}^{n}\left\{\sigma\lp A_{i}\rp\in B_{i}\right\}\right)=P\left(\bigcap_{i=1}^{n} \left\{ \theta^{r}\sigma\lp A_{i}\rp\in B_{i}\right\}\right)
	\end{align*}
for all $r\in \R,n>0,A_{1},\dots , A_{n}\in {\mathcal B} ( \R \times \R_+) ,B_{1},\dots,B_{n}\in {\mathcal B} ( \R_+ ) .$ A stationary random measure $\sigma$ is \textit{mixing} if
\begin{align}\label{eq:mix}
P\lp \sigma \in V,  \theta^r \sigma\in W\rp\conx{\lv r \rv \con \infty} P\lp \sigma\in V\rp P\lp \sigma \in W\rp
\end{align}
for all $ V, W \in {\mathcal M}_{\R \times \R_+}.$ We refer to Chapter 10.2-10.3 of \cite{DALEY} for a thorough introduction to ergodic theory for random measures.  As is the case for processes, mixing implies that ${\mathcal L} (\sigma) $ is ergodic w.r.t.\ the shift operator $\theta^{r}$ for all $r>0$, meaning that all invariant events have probability $0/1$.

Finally, we present a core measurability result, which can be applied to show measurability of all processes treated in this article.
\begin{lm}
	\label{measurelemma}
	Let $ D,E$ be complete and separable metric spaces, and let $H:D\times E\con \R_+ $ be measurable. The section integral $F: M_E  \times D \con \overline{\R} $
	\begin{align*}
	F\lp \nu, d \rp=\int_E H\lp d ,s\rp d\nu\lp s\rp
	\end{align*}
	is ${\mathcal M}_{E} \times {\mathcal B} ({D} ) \con {{\mathcal B}} ( \overline \R) $ measurable.
\end{lm}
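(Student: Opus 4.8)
The plan is to prove the measurability of $F$ by a functional monotone class argument, after reducing the integral against the general nonnegative kernel $H$ to integrals of bounded ``rectangle'' functions over bounded pieces of $E$, where joint measurability is immediate from the characterization \eqref{generate} of $\mathcal M_E$. (Throughout I write $\mathcal M_E\otimes\mathcal B(D)$ for the product $\sigma$-algebra, denoted $\mathcal M_E\times\mathcal B(D)$ in the statement.) For the reduction, fix $x_0\in E$ and let $E_M:=\{s\in E: d(s,x_0)\le M\}$, so that $E_M\uparrow E$ with each $E_M$ bounded; hence $\nu(E_M)<\infty$ for every $\nu\in M_E$. Since $H\ge 0$, two applications of monotone convergence give
$$ F(\nu,d)=\int_E H(d,s)\,\nu(ds)=\sup_{M\in\N}\ \sup_{n\in\N}\ \int_{E_M}\bigl(H(d,s)\wedge n\bigr)\,\nu(ds),$$
a countable supremum of $[0,\infty]$-valued maps on $M_E\times D$. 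Thus $F$ is $\mathcal M_E\otimes\mathcal B(D)$-measurable as soon as, for every bounded Borel $K\subseteq E$ and every bounded $\mathcal B(D)\otimes\mathcal B(K)$-measurable $G:D\times K\to\R$, the map $(\nu,d)\mapsto\int_K G(d,s)\,\nu(ds)$ is $\mathcal M_E\otimes\mathcal B(D)$-measurable; here $\mathcal B(D\times K)=\mathcal B(D)\otimes\mathcal B(K)$ since $D$ and $K$ are separable metric, and $\bigl|\int_K G\,d\nu\bigr|\le\|G\|_\infty\,\nu(K)<\infty$, so this map is genuinely real-valued.

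Then I would run the functional monotone class theorem on a fixed such $K$. Let $\mathcal H$ be the set of bounded $\mathcal B(D)\otimes\mathcal B(K)$-measurable $G$ for which $(\nu,d)\mapsto\int_K G(d,s)\,\nu(ds)$ is $\mathcal M_E\otimes\mathcal B(D)$-measurable. This is a vector space; it contains the constants, since for $G\equiv 1$ the map is $\nu\mapsto\nu(K)=\Pi_K(\nu)$, which is $\mathcal M_E$-measurable because by \eqref{generate} $\Pi_A$ is $\mathcal M_E$-measurable for every bounded Borel $A$; and it is closed under bounded monotone limits, since if $G_n\uparrow G$ with the $G_n\in\mathcal H$ uniformly bounded, dominated convergence (using $\nu(K)<\infty$) gives $\int_K G_n\,d\nu\to\int_K G\,d\nu$ pointwise on $M_E\times D$, preserving measurability. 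Finally $\mathcal H$ contains the indicators $\ql_{B\times A}$ for $B\in\mathcal B(D)$ and $A$ in the trace $\mathbb D\cap K:=\{A'\cap K:A'\in\mathbb D\}$ of the generating semi-ring, because $\int_K\ql_B(d)\ql_A(s)\,\nu(ds)=\ql_B(d)\,\Pi_A(\nu)$ is a product of a $\mathcal B(D)$-measurable function of $d$ and an $\mathcal M_E$-measurable function of $\nu$. Since $\{B\times A:B\in\mathcal B(D),\,A\in\mathbb D\cap K\}$ is $\cap$-stable and, using that $\mathbb D$ covers $E$ (\cite{DALEY}, A2.6), generates $\mathcal B(D)\otimes\mathcal B(K)$, the functional monotone class theorem yields that $\mathcal H$ contains every bounded $\mathcal B(D)\otimes\mathcal B(K)$-measurable function, in particular $H|_{D\times K}\wedge n$ for all $n$. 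Combined with the reduction above, this proves the lemma.

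The one genuinely delicate point, which I would state carefully, is that elements of $M_E$ are only \emph{boundedly} finite, not finite: this is exactly why the exhaustion $E_M\uparrow E$ and the truncation $H\wedge n$ are needed, and why the monotone class argument has to be carried out on a fixed bounded set $K$, on which $\nu(K)<\infty$ makes all integrals finite and dominated convergence available. Everything else --- that the rectangles form a $\cap$-stable generating class for the product Borel $\sigma$-algebra, and the stability of $\mathcal H$ under bounded monotone limits --- is routine, and I would simply invoke the standard functional monotone class theorem rather than reproduce its proof.
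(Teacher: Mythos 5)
Your proof is correct, but it follows a genuinely different route from the paper's. The paper factors $F = G \circ m$ through the space $M_{D \times E}$ of boundedly finite measures on the product: it sets $m(\nu, d) = \delta_d \otimes \nu$ and $G(\rho) = \int H\, d\rho$, checks measurability of $m$ by evaluating the projections $\Pi_{A \times B}$ on bounded boxes (which give the product $\ql\lb B\rb(d)\,\Pi_A(\nu)$ of measurable functions), and simply asserts that $G$ is ``easily seen'' to be measurable. That last assertion is precisely where the truncation-and-monotone-class work you carry out explicitly is hidden: proving measurability of $\rho \mapsto \int H\, d\rho$ on $M_{D\times E}$ requires the same exhaustion by bounded sets, truncation of $H$, and a monotone class pass from indicators of generating sets. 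So your argument is essentially a self-contained, ``unrolled'' version of the paper's, applied directly on $M_E \times D$ rather than after pushing forward to the product space; what you gain is that the role of bounded finiteness (why one must localize to a bounded $K$ with $\nu(K) < \infty$ before invoking dominated convergence) is made completely explicit, while the paper's factorization buys brevity and isolates a reusable intermediate fact. One small point worth flagging in your write-up: \eqref{generate} only gives measurability of $\Pi_A$ for $A$ in the semi-ring $\mathbb D$, so your uses of $\Pi_K$ and $\Pi_{A' \cap K}$ for general bounded Borel sets need a one-line Dynkin-class extension (the bounded Borel subsets of a fixed bounded set on which $\Pi$ is measurable form a $\lambda$-system containing the $\pi$-system generated by the trace of $\mathbb D$); this is standard, and the paper's own proof tacitly relies on the same extension when it projects onto arbitrary bounded boxes $A \times B$.
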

\begin{proof}
We start by defining $G:  M_{ D\times E} \con \overline{\R_+}$ as
\begin{align*}
G:\rho \mapsto \int H\lp x,y\rp \rho (dx, dy) .
\end{align*}
It is easily seen that $G$ is measurable.

Consider the map $m : M_E \times D \to M_{D \times E } $ given by $ \lp \nu , d\rp\mapsto \delta_{d} \otimes \nu  $. To prove measurability of $m$ it is sufficient to treat projections into bounded boxes $A\times B,  A\in {\mathcal B} ({D}) ,B\in {\mathcal B} ({E}) $. Such projections are simply given as $\Pi_{A\times B} m : \lp \nu,d  \rp \mapsto \ql \lb B\rb\lp d \rp\Pi_{A}\lp \nu \rp$ and are therefore measurable. We conclude that
\begin{align*}
G\circ m \lp \nu,d \rp = \int\int H\lp u,s \rp\; d\delta_{d}\lp u\rp d\nu\lp s\rp =\int H\lp d,s\rp d\nu\lp s\rp=F\lp \nu,d\rp ,
\end{align*}
proving that $F$ is measurable.
\end{proof}

We shall need the following version of Gronwall's lemma which has been proven in \cite{dfh}. Recall that for any function $g  :  \R_{+}\con \R_{}$ and any $ T > 0, $ we have introduced $\|g\|_T = \sup_{ t \le T } | g(t) |.$
\begin{lm}[Lemma 23 of \cite{dfh}]\label{convobound} Let $h : \R_{+}\con \R_{+}$ be locally integrable and $g  :  \R_{+}\con \R_{+}$ be locally bounded. Let $T\geq 0.$\\
1. Let $u$ be a locally bounded nonnegative function satisfying $u_{t}\leq g_{t}+\int_{0}^{t}h\lp t-s\rp u_{s} ds$ for all $t\in [0,T]$. If $b> 0$ satisfies that
	\begin{align}
	 \int_{0}^{T}h\lp s\rp \ql\lb h\lp s\rp \geq b\rb ds<\frac12 ,  \label{A-equality}
	\end{align} then $\lV u \rV_{T}\leq 2e^{2bT}\lV g\rV_{T} =: C_T \lV g\rV_{T} .$\\
2.  Let $\lp u^{n}\rp$ be a sequence of locally bounded nonnegative functions such that
	$u^{n+1}_{t}\leq g_{t}+ \int_{0}^{t}h\lp t-s\rp u^{n}_{s}ds$ for all $t\in [0,T]$. Then $\sup_{n}\lV u^{n}\rV_{T}\leq C_{T}\lp \lV g \rV_{T}+\lV u^{0}\rV_{T}\rp.$ Moreover,  if the inequality is satisfied with $g\equiv 0, $ then $\sum_{n} u^{n}$ converges uniformly on $[0,T]$.
\end{lm}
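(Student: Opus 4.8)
The plan is to derive both parts from a single elementary splitting of the kernel $h$, combined with a monotonicity device that makes the resulting bounds uniform in the iteration index, and then the classical integral Gronwall inequality. Fix $b>0$ as in \eqref{A-equality} and decompose, on $[0,T]$, $h = h^b + h_b$ with $h^b := h\,\ql\lb h\geq b\rb$ and $h_b := h\,\ql\lb h<b\rb$, so that $0\le h_b\le b$ and $\int_0^T h^b(r)\,dr < \tfrac12$. For any nonnegative measurable $w$ and any $t\le T$ one gets the basic estimate
\begin{equation*}
\int_0^t h(t-s)\,w_s\,ds \;\le\; \lV w\rV_t \int_0^t h^b(r)\,dr \;+\; b\int_0^t w_s\,ds \;\le\; \tfrac12\,\lV w\rV_t + b\int_0^t w_s\,ds ,
\end{equation*}
using $t\le T$ and \eqref{A-equality} in the last step.

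For part 1, apply this with $w=u$ to the hypothesis $u_t \le g_t + \int_0^t h(t-s)u_s\,ds$, obtaining $u_t \le g_t + \tfrac12\lV u\rV_t + b\int_0^t u_s\,ds$. Taking the supremum over $s\le t$ on the left and using that $t\mapsto\lV u\rV_t$ is nondecreasing gives $\tfrac12\lV u\rV_t \le \lV g\rV_T + b\int_0^t\lV u\rV_s\,ds$, i.e. $\lV u\rV_t \le 2\lV g\rV_T + 2b\int_0^t\lV u\rV_s\,ds$. Since $u$ is locally bounded, Gronwall's lemma then yields $\lV u\rV_T \le 2e^{2bT}\lV g\rV_T$, which is the claim with $C_T := 2e^{2bT}$.

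For the uniform bound in part 2, iterating the inequality naively only produces the factor $\tfrac12+bT$, which need not be $<1$, so instead I would run the part 1 argument on the partial maxima $P_n(t):=\max_{0\le m\le n}\lV u^m\rV_t$. Each $P_n$ is locally bounded and nondecreasing in $t$; from $u^m_t \le g_t + \int_0^t h(t-s)u^{m-1}_s\,ds$ for $1\le m\le n$, together with $u^{m-1}_s\le P_n(s)$ and the basic estimate, one gets $u^m_t \le \lV g\rV_T + \lV u^0\rV_T + \tfrac12 P_n(t) + b\int_0^t P_n(s)\,ds$ for all $0\le m\le n$; taking $\sup_{s\le t}$ and then $\max_{0\le m\le n}$ gives the same bound for $P_n(t)$, whence $P_n(t)\le 2(\lV g\rV_T+\lV u^0\rV_T)+2b\int_0^t P_n(s)\,ds$ and Gronwall gives $P_n(T)\le C_T(\lV g\rV_T+\lV u^0\rV_T)$ uniformly in $n$, bounding $\sup_n\lV u^n\rV_T$. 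For the convergence statement ($g\equiv0$), apply part 1 directly to the partial sums $U^{(n)}:=\sum_{m=0}^n u^m$, which satisfy $U^{(n)}_t \le u^0_t + \int_0^t h(t-s)U^{(n)}_s\,ds$; this gives $\lV U^{(n)}\rV_T\le 2e^{2bT}\lV u^0\rV_T$ uniformly in $n$, so $U:=\sum_{m\ge0}u^m$ is bounded on $[0,T]$ and the tails $v_n:=\sum_{k\ge n}u^k$ decrease pointwise to $0$ with $0\le v_n\le\lV U\rV_T$. From $v_n \le h * v_{n-1}$ and the basic estimate, $\lV v_n\rV_T \le \tfrac12\lV v_{n-1}\rV_T + b\int_0^T v_{n-1}(s)\,ds$; the last term tends to $0$ by dominated convergence, and a recursion $a_n\le\tfrac12 a_{n-1}+\varepsilon_{n-1}$ with $\varepsilon_n\to0$ forces $a_n\to0$, so $\lV v_n\rV_T\to0$, i.e. $\sum_n u^n$ converges uniformly on $[0,T]$.

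The only genuinely delicate point is this passage from the pointwise recursions to bounds uniform in $n$; it is handled uniformly throughout by applying Gronwall not to the individual $u^n$ but to their partial maxima (for the uniform bound) and partial sums (for convergence), plus the trivial recursion lemma above. Everything else — measurability and monotonicity of $t\mapsto\lV u\rV_t$, local boundedness of the auxiliary functions $P_n$, $U^{(n)}$, the dominated-convergence step, and the integral Gronwall inequality — is routine.
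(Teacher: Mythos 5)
Your proof is correct. The paper itself does not prove this lemma --- it is quoted verbatim as Lemma 23 of \cite{dfh} --- so there is no in-paper argument to compare against; your route (splitting $h$ at level $b$ into a piece of integral $<\tfrac12$ and a piece bounded by $b$, then Gronwall on $t\mapsto\lV u\rV_t$) is exactly the standard one from that reference, and your device of applying it to the partial maxima $P_n$ and partial sums $U^{(n)}$ handles the uniformity in $n$ and the uniform convergence of $\sum_n u^n$ cleanly.
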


\subsubsection{Point Process Results}
We collect some useful results on point processes known in the literature.

\begin{lm}
	\label{compensator}

	Let $H:\lp \Omega\times \R\times \R\rp \con \R$ be ${\mathcal P}\otimes {\mathcal B}\rightarrow {\mathcal B}$ measurable and assume that almost surely
	\begin{align*}
	Z : t\mapsto \int_{0}^{t}\int_{0}^{\infty} H\lp s,z\rp \pi\lp ds,dz\rp
	\end{align*}
	does not explode; that is, for all $ t > 0, $
	$$ \int_{0}^{t}\int_{0}^{\infty} |H\lp s,z\rp |\pi\lp ds,dz\rp < \infty $$ almost surely.
	\begin{enumerate}
	\item If $ H $ is bounded, then the compensator  $\Lambda$ of $Z$ is given by
	\begin{align*}
	\Lambda : t\mapsto \int_{0}^{t}\int_{0}^{\infty} H\lp s,z\rp  dz\; ds ,
	\end{align*}
	i.e.\ $Z-\Lambda$ is a local $\lp {\mathcal F}_{t}\rp$-martingale.
	\item If moreover  $s\mapsto {\mathbb E} \int | H\lp s,z\rp | dz$ is locally integrable, then $Z-\Lambda$ is a martingale.
	\item Fix $T\geq 0$ and assume that $\Lambda$ can be written as
	\begin{align*}
	\Lambda_{t}=\int_{0}^{t} \lambda_{s} ds.
	\end{align*}

	Assume also that $\lambda\lp s\rp=F\lp Z_{\vert \lp -\infty,T\rp },s\rp+\ve\lp s\rp$ where $F$ is $\mathcal{M}_{\R} \times {\mathcal B} ( \R ) \mapsto {\mathcal B} ( \R) $ measurable and $t\mapsto \ve\lp t\rp$ is $\lp {\mathcal F}_{t\wedge T}\rp$-predictable.  It holds that
	\begin{align*}
	P\lp Z\lp T,\infty\rp=0 \; \vert {\mathcal F}_{T}\rp \stackrel{a.s.}{= } \exp\lp -\int_{T}^{\infty} F\lp Z_{\vert \lp -\infty,T\rf },s\rp+\ve\lp s\rp ds\rp.
	\end{align*}

	\end{enumerate}
\end{lm}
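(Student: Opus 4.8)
The plan is to get (1)--(2) from the standard (local) martingale calculus of integrals against a compensated Poisson random measure, and then to derive (3) by writing down and solving an integral equation for the $\mathcal F_T$-conditional survival function of the first jump after time $T$.

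For (1), localize via $T_n := \inf\{t : \int_0^t\int_0^\infty |H(s,z)|\,\pi(dz,ds)\geq n\}$, which increases to $\infty$ almost surely by non-explosion, so that $\Lambda_t$ is finite-valued. On $[0,T_n]$ one approximates the bounded predictable integrand $H$ by simple predictable integrands $\sum_i \xi_i \ql_{(a_i,b_i]}(s)\ql_{C_i}(z)$ with $\xi_i$ bounded and $\mathcal F_{a_i}$-measurable and $C_i$ of finite Lebesgue measure; for these, independence and the Poisson law of the increments of $\pi$ over disjoint time windows give at once that $Z-\Lambda$ is a martingale, and passing to the limit (via the identity $\E\int_0^{t\wedge T_n}\int_0^\infty|H|\,\pi(dz,ds) = \E\int_0^{t\wedge T_n}\int_0^\infty|H(s,z)|\,dz\,ds$) shows $(Z-\Lambda)_{\cdot\wedge T_n}$ is a martingale; hence $Z-\Lambda$ is a local martingale. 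For (2), the extra local integrability of $s\mapsto\E\int_0^\infty|H(s,z)|\,dz$ upgrades this identity to $\E\int_0^t\int_0^\infty|H|\,\pi(dz,ds)<\infty$, so $\sup_{s\le t}|Z_s-\Lambda_s|\le \int_0^t\int_0^\infty|H|\,\pi(dz,ds) + \Lambda_t$ is integrable, and a local martingale dominated on compacts by an integrable variable is a true martingale.

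For (3), fix $T\geq 0$ and set $Y_s := \ql\{Z(T,s]=0\}$ for $s\geq T$, the survival indicator of the first jump $\tau$ of $Z$ after $T$; $Y$ is c\`adl\`ag, non-increasing, $Y_T=1$, and $Y_s = 1 - \int_{(T,s]}Y_{r-}\,dZ_r$. Applying (1) to the bounded predictable integrand $Y_{r-}\ql_{(T,\infty)}(r)H(r,z)$, whose $\pi$-integral has compensator $\int_T^sY_{r-}\lambda_r\,dr$ (recall $\lambda_r=\int_0^\infty H(r,z)\,dz$), the process $\int_{(T,s]}Y_{r-}\,dZ_r - \int_T^sY_{r-}\lambda_r\,dr$ is a local martingale; since $\int_{(T,s]}Y_{r-}\,dZ_r\le 1$, localization plus monotone convergence give $\E\int_T^sY_{r-}\lambda_r\,dr\le 1$, so it is a genuine martingale $M_s$ and $Y_s = 1 - M_s - \int_T^sY_{r-}\lambda_r\,dr$. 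Taking $\mathcal F_T$-conditional expectations and applying Tonelli, $p(s):=P(Z(T,s]=0\mid\mathcal F_T)$ satisfies $p(s)=1-\int_T^s\E[Y_{r-}\lambda_r\mid\mathcal F_T]\,dr$.

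The crux is the next step. On $\{Y_{r-}=1\}$ there is no jump of $Z$ in $(T,r)$, hence $Z(-\infty,r)=Z(-\infty,T]$ and so $\lambda_r=F(Z(-\infty,T],r)+\varepsilon(r)=:g(r)$, which is $\mathcal F_T$-measurable because $r\mapsto\varepsilon(r)$ is $(\mathcal F_{t\wedge T})$-predictable. Thus $Y_{r-}\lambda_r=Y_{r-}g(r)$ identically, whence $\E[Y_{r-}\lambda_r\mid\mathcal F_T]=g(r)\,\E[Y_{r-}\mid\mathcal F_T]=g(r)\,p(r-)=g(r)\,p(r)$ for Lebesgue-a.e.\ $r$ (the monotone function $p$ jumping on at most a countable set). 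Hence $p$ solves the linear Volterra equation $p(s)=1-\int_T^sg(r)\,p(r)\,dr$ with $g\geq 0$ (and $\int_T^sg(r)\,dr=\Lambda_s-\Lambda_T<\infty$ on $\{Y_{s-}=1\}$, while $p$ already vanishes past $\tau$ otherwise), whose unique solution is $p(s)=\exp(-\int_T^sg(r)\,dr)$. Since $Y_s\downarrow\ql\{Z(T,\infty)=0\}$ as $s\to\infty$, monotone convergence for conditional expectations yields
\[
P(Z(T,\infty)=0\mid\mathcal F_T)=\exp\!\Big(-\int_T^\infty\big(F(Z(-\infty,T],r)+\varepsilon(r)\big)\,dr\Big).
\]
The main obstacle is exactly this collapse of $\lambda_r$ to the $\mathcal F_T$-measurable hazard $g(r)$ on the no-jump event and its extraction from the conditional expectation — which is where the structural assumption $\lambda_s=F(Z(-\infty,s),s)+\varepsilon(s)$ with predictable $\varepsilon$ is indispensable — plus a mild check of the degenerate case $\int_T^\infty g=\infty$, corresponding to $P(Z(T,\infty)=0\mid\mathcal F_T)=0$.
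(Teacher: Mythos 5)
Your proposal is correct, and for points 1 and 2 it is essentially the paper's argument: the paper invokes Theorem II.1.8 of Jacod--Shiryaev with exactly your localizing sequence $T_n$ for the local martingale property, and proves the true martingale property in point 2 by the same domination $\E(\sup_{s\le t}|M_{s\wedge T_n}|)\le 2\,\E\int_0^t\int_0^\infty |H(s,z)|\,dz\,ds$ plus monotone convergence; your simple-integrand approximation just fills in the details behind the citation. The only genuine divergence is point 3, which the paper dispatches in one line by citing Lemma 1 of Br\'emaud--Massouli\'e; you instead prove it from scratch via the survival indicator $Y_s=\ql\{Z(T,s]=0\}$, the resulting linear Volterra equation for $p(s)=P(Z(T,s]=0\mid\mathcal F_T)$, and the collapse of $\lambda_r$ to the $\mathcal F_T$-measurable hazard $g(r)=F(Z(-\infty,T],r)+\ve(r)$ on the no-jump event --- which is precisely the mechanism underlying the cited lemma, so nothing is lost and the paper becomes self-contained. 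One small point worth making explicit: your identities $Y_s=1-\int_{(T,s]}Y_{r-}\,dZ_r$ and $\int_{(T,s]}Y_{r-}\,dZ_r\le 1$ presuppose that $Z$ is a simple counting process (unit jumps), which is implicit in the hypotheses of point 3 (an absolutely continuous compensator of the form $F(Z(-\infty,s),s)+\ve(s)$) and in every application in the paper, but is not literally imposed on the general $H$ of the lemma's preamble.
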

\begin{proof}
The first point follows from  \cite{jacod}, Theorem 1.8 of Chapter II,  by using the localizing sequence $ T_n = \inf \{ t :  \int_{0}^{t}\int_{0}^{\infty} |H\lp s,z\rp |\pi\lp ds,dz\rp \geq n \} , n \geq 1 ,$ since
$$   \int_0^{T_n}\int_{0}^{\infty} |H\lp s,z\rp |\pi\lp ds,dz\rp  \le n  + \| H \|_\infty .$$
For the second point, let $M_t := Z_t - \Lambda_t .$ It suffices to show that $ \E ( \sup_{s \le t } | M_s|) < \infty  $ which follows from
$$ \E \left(\sup_{ s \le t } | M_{ s \wedge T_n } |\right) \le 2 \E \int_0^t \int_0^\infty | H ( s, z ) | ds dz < \infty $$
by monotone convergence. The third point is Lemma 1 in \cite{bm}.
\end{proof}

\begin{lm}[Lemma 5 of \cite{bm}]
	\label{ergodiclemma}
	Assume that $X,Y$ are $\lp {\mathcal F}_{t}\rp_{t\in \R_+}$-progressive and that for all $ s \geq 0$
	\begin{align*}
	P\lp X_{t} = Y_{t}\;  \forall t>s\; \vert {\mathcal F}_{s} \rp \geq U_{s} -r \lp s\rp ,
	\end{align*}
	where $U$ is ergodic, $P\lp U_{s}>0\rp>0$ and $r \lp t\rp\conx{a.s.}0$ for $ t \to \infty .$ Then almost surely, $X$ and $Y$ couple in finite time.
\end{lm}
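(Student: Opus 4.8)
The plan is to recast the assertion as a $0$--$1$ law, combining the martingale convergence theorem with the ergodicity of $U$. For $s \ge 0$ put
\[
A_s := \{\, X_t = Y_t \text{ for all } t > s \,\}, \qquad A := \bigcup_{s \ge 0} A_s ,
\]
so that $A$ is exactly the event that $X$ and $Y$ couple in finite time, and the family $(A_s)$ is nondecreasing in $s$ with $\bigcup_s A_s = A$. Since $X,Y$ are progressive and the coupling event $A_s$ is determined by countably many coordinates (e.g.\ $A_s = \bigcap_{q \in \mathbb Q,\, q>s}\{X_q = Y_q\}$ when the paths are suitably regular), each $A_s$, and hence $A$, lies in $\mathcal F_\infty := \sigma\!\big(\bigcup_{t\ge 0}\mathcal F_t\big)$. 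The goal is then to show $P(A) = 1$.

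First I would feed $A_s \subset A$ into the hypothesis to obtain
\[
P(A \mid \mathcal F_s) \ \ge\ P(A_s \mid \mathcal F_s) \ \ge\ U_s - r(s) \qquad \text{for all } s \ge 0 .
\]
Taking a c\`adl\`ag version of the bounded martingale $s \mapsto P(A\mid\mathcal F_s)$ (legitimate because the augmented filtration satisfies the usual conditions), the martingale convergence theorem gives $P(A\mid\mathcal F_s) \to \ql_A$ almost surely as $s \to \infty$, and this limit is then the same along every sequence $s_n \to \infty$. Combining the two facts,
\[
\ql_A \ =\ \lim_{s\to\infty} P(A\mid\mathcal F_s) \ \ge\ \limsup_{s\to\infty}\big(U_s - r(s)\big) \qquad\text{a.s.}
\]

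It remains to check that $\limsup_{s\to\infty}\big(U_s - r(s)\big) > 0$ almost surely, for then $\ql_A > 0$ a.s., which (being an indicator) forces $\ql_A = 1$ a.s., i.e.\ $P(A)=1$. Since $r(s) \to 0$ a.s., this reduces to $\limsup_{s\to\infty} U_s > 0$ a.s. Choose $\ve > 0$ with $P(U_0 > \ve) > 0$, possible because $P(U_s > 0) > 0$. By ergodicity of $U$, Birkhoff's theorem yields $\frac1T\int_0^T \ql\{U_s > \ve\}\,ds \to P(U_0 > \ve) > 0$ almost surely, so the set $\{\,s\ge 0 : U_s > \ve\,\}$ is almost surely unbounded; picking $s_n \to \infty$ inside it gives $\ql_A = \lim_n P(A\mid\mathcal F_{s_n}) \ge \liminf_n (U_{s_n} - r(s_n)) \ge \ve > 0$ a.s., which finishes the proof.

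The point that needs care is the coordination of the two almost sure statements --- the convergence $P(A\mid\mathcal F_s)\to\ql_A$ and the recurrence $U_s > \ve$ for arbitrarily large $s$ --- on a single almost sure event, so that for each such $\omega$ one genuinely extracts a common sequence $s_n(\omega)\to\infty$ along which $U_{s_n}>\ve$ while $P(A\mid\mathcal F_{s_n})$ is within $\ve/2$ of $\ql_A$. The remaining technicalities --- $\mathcal F_\infty$-measurability of $A_s$ (so that the conditional probabilities in the hypothesis are well defined) and right-continuity of the filtration (so that a c\`adl\`ad martingale version exists) --- hold for the processes considered in this paper. No quantitative rate on $r$ and no quantitative ergodic estimate are required.
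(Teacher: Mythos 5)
Your proof is correct and follows essentially the standard route: the paper itself gives no proof of this lemma (it is quoted as Lemma 5 of Br\'emaud--Massouli\'e), and the argument there is precisely this combination of L\'evy's upward theorem applied to the coupling event $A$ with Birkhoff's theorem for the stationary ergodic process $U$. The one technicality you rightly flag --- invoking the bound $P(A\mid\mathcal F_s)\ge U_s-r(s)$ along a random sequence of times when the hypothesis only gives it almost surely for each fixed $s$ --- is most cleanly closed by averaging rather than by extracting sequences: by Fubini the bound holds for Lebesgue-a.e.\ $s$ almost surely, so $\frac1T\int_0^T P(A\mid\mathcal F_s)\,ds\ \ge\ \frac1T\int_0^T\lp U_s-r(s)\rp ds$, where the left side tends a.s.\ to $\ql_A$ (Ces\`aro limit of a convergent function) and the right side has $\liminf$ at least $\ve\, P(U_0>\ve)>0$ by the continuous-time ergodic theorem; this also sidesteps the pitfall that ergodicity of the flow need not be inherited by a deterministic discrete skeleton such as $s_n=n$.
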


\end{document}